\numberwithin{equation}{section}
\newtheorem{thm}{Theorem}[section]
\newtheorem{lem}[thm]{Lemma}
\newtheorem{cor}[thm]{Corollary}
\theoremstyle{definition}
\newtheorem{defn}{Definition}[section]
\begin{document}
\title{Unconditionally $p$-converging operators and Dunford-Pettis Property of order $p$}
\author{Dongyang Chen}
\address{School of Mathematical Sciences\\ Xiamen University,
Xiamen,361005,China}
\email{cdy@xmu.edu.cn}
\author{J. Alejandro Ch\'{a}vez-Dom\'{i}nguez}
\address{Department of Mathematics, University of Oklahoma, Norman, OK 73019-3103,USA}
\email{jachavezd@math.ou.edu}
\author{Lei Li}
\address{School of Mathematical Sciences and LPMC, Nankai University, Tianjin, 300071, China}
\email{leilee@nankai.edu.cn}

\thanks{Dongyang Chen's project was supported by the Natural Science Foundation of Fujian Province of China(No. 2015J01026).
Lei Li is the corresponding author and his research is partly supported by the NSF of China (11301285)}

\begin{abstract}
In the present paper we study unconditionally $p$-converging operators and Dunford-Pettis property of order $p$. New characterizations of unconditionally $p$-converging operators and Dunford-Pettis property of order $p$ are established. Six quantities are defined to measure how far an operator is from being unconditionally $p$-converging. We prove quantitative versions of relationships of completely continuous operators,unconditionally $p$-converging operators and unconditionally converging operators. We further investigate possible quantifications of the Dunford-Pettis property of order $p$.
\end{abstract}

\date{\today}
\maketitle

\baselineskip=18pt 	
\section{Introduction and notations}
Throughout the paper, $p^{*}$ denotes the conjugate number of $p$ for $1\leq p<\infty$; if $p=1$, $l_{p^{*}}$ plays the role of $c_{0}$. $X,Y$ will denote real (or complex) Banach spaces and $\mathcal{L}(X,Y)$ the space of all the operators (=continuous linear maps) between $X$ and $Y$. $\mathcal{K}(X,Y)$ denotes the space of all the compact operators between $X$ and $Y$.
Let $X$ be a Banach space, $1\leq p<\infty$ and we denote $l_{p}(X)$ by the set of all $p$-summable sequences in $X$ with the natural norm $\|(x_{n})_{n}\|_{p}=(\sum_{n=1}^{\infty}\|x_{n}\|^{p})^{\frac{1}{p}}$.
Let $l^{w}_{p}(X)$ be the set of all weakly $p$-summable sequences in $X$. Then $l^{w}_{p}(X)$ is a Banach space with the norm $$\|(x_{n})_{n}\|_{p}^{w}=\sup\{(\sum_{n=1}^{\infty}|<x^{*},x_{n}>|^{p})^{\frac{1}{p}}:x^{*}\in B_{X^{*}}\}, \quad\forall\,(x_{n})_{n}\in l^{w}_{p}(X).$$
It is a well-known result of A. Grothendieck (\cite{G1},[12,Proposition 2.2])that the canonical correspondence $T \mapsto (Te_{n})_{n}$ provides an isometric isomorphism of $\mathcal{L}(l_{p^{*}},X)$ onto $l^{w}_{p}(X)$. A sequence $(x_{n})_{n}\in l^{w}_{p}(X)$ is \textit{unconditionally $p$-summable} if $$\sup\{(\sum_{n=m}^{\infty}|<x^{*},x_{n}>|^{p})^{\frac{1}{p}}:x^{*}\in B_{X^{*}}\}\rightarrow 0\quad\text{as}\; m\rightarrow \infty.$$
We denote the set of all unconditionally $p$-summable sequences on $X$ by $l^{u}_{p}(X)$. It is obvious that $(x_{n})_{n}$ is unconditionally $1$-summable if and only if $(x_{n})_{n}$ is unconditionally summable. J. H. Fourie and J. Swart proved that the same correspondence $T \mapsto (Te_{n})_{n}$ provides an isometric isomorphism of $\mathcal{K}(l_{p^{*}},X)$ onto $l^{u}_{p}(X)$ (see \cite{FS}). Let us recall that an operator $T:X\rightarrow Y$ is \textit{unconditionally converging} if $T$ takes weakly $1$-summable sequences to unconditionally $1$-summable sequences.
For $p=\infty$, the space $l^{u}_{\infty}(X)$ is identical to $c_{0}(X)$, the space of all norm null sequences in $X$.
Henceforth, for $p=\infty$, we refer to consider the space $c^{w}_{0}(X)$ of weakly null sequences in $X$, instead of $l^{w}_{\infty}(X)=l_{\infty}(X)$. Recall that an operator $T:X\rightarrow Y$ is \textit{completely continuous} if $T$ takes weakly null sequences to norm null sequences.
It is well-known that $p$-summing operators are precisely those operators which take weakly $p$-summable sequences(unconditionally $p$-summable sequences) to $p$-summable sequences. A natural question arises: what are operators which take weakly $p$-summable sequences to unconditionally $p$-summable sequences? This is the starting point of our investigation. The paper is organized as follows:

In Section 2, we introduce the concept of unconditionally $p$-converging operators$(1\leq p\leq \infty)$, which is the extension of unconditionally converging operators and completely continuous operators. It is proved that unconditionally $p$-converging operators coincide with the $p$-converging operators introduced by J. M. F. Castillo and F. S\'{a}nchez in \cite{CS2} although their original definitions are different. New concepts of weakly $p$-Cauchy sequences and weakly $p$-limited sets are introduced to characterize unconditionally $p$-converging operators. We establish characterizations of weakly $p$-limited sets and investigate connections between weakly $p$-limited sets and relatively norm compact sets.
A counterexample is constructed to show that an operator is unconditionally $p$-converging not precisely when its second adjoint is.

Section 3 is concerned with Dunford-Pettis property of order $p$ ($DPP_{p}$ for short) introduced in \cite{CS2}, which is a generalization of the classical Dunford-Pettis property. It turns out that many classical spaces failing Dunford-Pettis property enjoy $DPP_{p}$, such as Hardy space $H^{1}$ and Lorentz function spaces $\Lambda(W,1)$.
In this section, we use weakly $p$-Cauchy sequences and weakly $p$-limited sets to characterize $DPP_{p}$.
New characterizations of $DPP_{p}$ in dual spaces are obtained. We also introduce the notion of hereditary Dunford-Pettis property of order $p$ and establish its characterizations. In particular, we prove that a Banach space $X$ has the hereditary $DPP_{p}$ if and only if every weakly $p$-summable sequence in $X$ admits a weakly $1$-summable subsequence.
Finally, the surjective Dunford-Pettis property of order $p$, a formally weaker property than $DPP_{p}$, is introduced and its characterizations are obtained.

In the last two sections of the present paper we investigate possibilities of quantifying unconditionally $p$-converging operators and the Dunford-Pettis property of order $p$. This is inspired by a large number of recent results on quantitative versions of various theorems and properties of Banach spaces (see [1,3,13,17,18,19]).
Section 4 contains quantitative versions of the implications among three classes of operators-completely continuous,unconditionally $p$-converging and unconditionally converging ones. M. Ka\v{c}ena, O. F. K. Kalenda and J. Spurn\'{y} have already defined a quantity measuring how far an operator is from being completely continuous in \cite{KKS}. In this section, we define another equivalent quantity measuring complete continuity of an operator. We further define six  quantities measuring how far an operator is from being unconditionally $p$-converging. Moreover, we show that one of the six new quantities is equal to the quantity defined in \cite{K} to measure how far an operator is unconditionally converging in case of $p=1$.

In Section 5 we introduce a new locally convex topology and give two topological characterizations of Dunford-Pettis property of order $p$. Using the introduced quantity measuring unconditional $p$-convergence of an operator and the new locally convex topology, we show that the Dunford-Pettis property of order $p$ is automatically quantitative in a sense. We also define two quantities measuring how far a set is weakly $p$-limited. One of the two new quantities is used to quantify the Dunford-Pettis property of order $p$. The other is used to define a stronger quantitative version of Dunford-Pettis property of order $p$. Several characterizations of this quantitative version of Dunford-Pettis property of order $p$ are established.

The reader is referred to \cite{DJT} and \cite{LT} for any unexplained notation or terminology.



\section{Unconditionally $p$-converging operators}

\begin{defn}
Let $1\leq p\leq \infty$. We say that an operator $T:X\rightarrow Y$ is \textit{unconditionally $p$-converging} if $T$ takes a weakly $p$-summable sequence $(x_{n})_{n}\in l^{w}_{p}(X)((x_{n})_{n}\in c^{w}_{0}(X)$ for $p=\infty$) to an unconditionally $p$-summable sequence $(Tx_{n})_{n}\in l^{u}_{p}(Y)((x_{n})_{n}\in c_{0}(Y)$ for $p=\infty$).
\end{defn}

We begin with a simple, but extremely useful, characterization of unconditionally $p$-converging operators.

\begin{thm}\label{3.1}
Let $1\leq p<\infty$. The following are equivalent for an operator $T:X\rightarrow Y$:
\item[(1)]$T$ is unconditionally $p$-converging;
\item[(2)]$TS$ is compact for any operator $S\in \mathcal{L}(l_{p^{*}},X)$($\mathcal{L}(c_{0},X)$ for $p=1$).
\end{thm}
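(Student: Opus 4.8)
The plan is to exploit the two isometric isomorphisms recalled in the introduction: Grothendieck's identification of $\mathcal{L}(l_{p^{*}},X)$ with $l^{w}_{p}(X)$ via $S\mapsto(Se_{n})_{n}$, and the Fourie--Swart identification of $\mathcal{K}(l_{p^{*}},X)$ with $l^{u}_{p}(X)$ via the same correspondence. Together they say: an operator $S:l_{p^{*}}\to X$ is compact precisely when the weakly $p$-summable sequence $(Se_{n})_{n}$ is in fact unconditionally $p$-summable. Once this is in hand, the theorem is essentially a translation, and (for $p=1$) one simply reads $l_{p^{*}}$ as $c_{0}$ throughout, consistently with the convention fixed in Section~1.

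For the implication $(1)\Rightarrow(2)$, I would take an arbitrary $S\in\mathcal{L}(l_{p^{*}},X)$. By Grothendieck's result, $(Se_{n})_{n}\in l^{w}_{p}(X)$, so since $T$ is unconditionally $p$-converging we get $(TSe_{n})_{n}\in l^{u}_{p}(Y)$. But $TSe_{n}=(TS)e_{n}$, so by the Fourie--Swart isomorphism the operator $TS\in\mathcal{L}(l_{p^{*}},Y)$ must be compact.

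For the converse $(2)\Rightarrow(1)$, I would start from an arbitrary $(x_{n})_{n}\in l^{w}_{p}(X)$. Using Grothendieck's isomorphism again, there is a (unique) operator $S\in\mathcal{L}(l_{p^{*}},X)$ with $Se_{n}=x_{n}$ for all $n$. By hypothesis $TS$ is compact, hence by Fourie--Swart the sequence $((TS)e_{n})_{n}=(Tx_{n})_{n}$ lies in $l^{u}_{p}(Y)$. As $(x_{n})_{n}$ was arbitrary, $T$ is unconditionally $p$-converging.

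There is no serious obstacle here; the entire content is carried by the two cited representation theorems, and the only point requiring a little care is bookkeeping in the case $p=1$, where $p^{*}=\infty$ and $l_{p^{*}}$ is to be understood as $c_{0}$ (so that the relevant spaces are $\mathcal{L}(c_{0},X)\cong l^{w}_{1}(X)$ and $\mathcal{K}(c_{0},X)\cong l^{u}_{1}(X)$). I would simply remark that both isomorphisms remain valid under this convention, so the argument above applies verbatim.
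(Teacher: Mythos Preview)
Your proof is correct and follows essentially the same approach as the paper: both directions rest on the Grothendieck and Fourie--Swart identifications $\mathcal{L}(l_{p^{*}},X)\cong l^{w}_{p}(X)$ and $\mathcal{K}(l_{p^{*}},X)\cong l^{u}_{p}(X)$ via $S\mapsto(Se_{n})_{n}$. The only cosmetic difference is that in $(1)\Rightarrow(2)$ the paper first invokes the ideal property to say $TS$ is unconditionally $p$-converging and then applies it to $(e_{n})_{n}$, whereas you apply the definition of $T$ directly to the sequence $(Se_{n})_{n}\in l^{w}_{p}(X)$; these are equivalent one-line observations.
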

\begin{proof}
$(1)\Rightarrow(2)$. Let $S\in \mathcal{L}(l_{p^{*}},X)(1<p<\infty)$($\mathcal{L}(c_{0},X)$ for $p=1$). By the ideal property of unconditionally $p$-converging operators, $TS$ is unconditionally $p$-converging. Since $(e_{n})_{n}$ is weakly $p$-summable in $l_{p^{*}}(1<p<\infty)$($c_{0}$ for $p=1$), $(TSe_{n})_{n}$ is unconditionally $p$-summable.
Then there exists a compact operator $R:l_{p^{*}} \rightarrow X$ such that $Re_{n}=TSe_{n}(n=1,2,...)$. Thus $TS$ is compact.

$(2)\Rightarrow(1)$. Let $(x_{n})_{n}\in l^{w}_{p}(X)$. Then there exists an operator $S:l_{p^{*}} \rightarrow X(1<p<\infty)$($S:c_{0}\rightarrow X$ for $p=1$) such that
$Se_{n}=x_{n}(n=1,2,...)$. By (2), we get $(TSe_{n})_{n}$ is unconditionally $p$-summable. Thus $TS$ is unconditionally $p$-converging.

\end{proof}

Before another frequently useful characterization of unconditionally $p$-converging operators is given, we recall the notion of weakly $p$-convergent sequences introduced in \cite{CS}. A sequence $(x_{n})_{n}$ in a Banach space $X$ is said to be weakly $p$-convergent to $x\in X(1\leq p\leq \infty)$ if the sequence $(x_{n}-x)_{n}$ is weakly $p$-summable in $X$. Weakly $\infty$-convergent sequences are simply the weakly convergent sequences. It is natural to generalize weakly Cauchy sequences to the general case $1\leq p\leq \infty$.

\begin{defn}
Let $1\leq p\leq \infty$. We say that a sequence $(x_{n})_{n}$ in a Banach space $X$ is \textit{weakly $p$-Cauchy} if for each pair of strictly increasing sequences $(k_{n})_{n}$ and $(j_{n})_{n}$ of positive integers, the sequence $(x_{k_{n}}-x_{j_{n}})_{n}$ is weakly $p$-summable in $X$.
\end{defn}
Obviously, every weakly $p$-convergent sequence is weakly $p$-Cauchy, and the weakly $\infty$-Cauchy sequences are precisely the weakly Cauchy sequences.

\begin{thm}\label{3.2}
Let $1\leq p\leq \infty$. The following statements about an operator $T:X\rightarrow Y$ are equivalent:
\item[(1)]$T$ is unconditionally $p$-converging;
\item[(2)]$T$ sends weakly $p$-convergent sequences onto norm convergent sequences;
\item[(3)]$T$ sends weakly $p$-Cauchy sequences onto norm convergent sequences.
\end{thm}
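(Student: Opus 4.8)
The plan is to prove the cycle $(1)\Rightarrow(3)\Rightarrow(2)\Rightarrow(1)$. The implication $(3)\Rightarrow(2)$ is immediate, since every weakly $p$-convergent sequence is weakly $p$-Cauchy, as observed above. For $(1)\Rightarrow(3)$ I would argue by contradiction: if $(x_n)_n$ is weakly $p$-Cauchy but $(Tx_n)_n$ is not norm convergent, then it is not norm Cauchy, so there are strictly increasing sequences $(k_n)_n$, $(j_n)_n$ of positive integers and an $\ep>0$ with $\|Tx_{k_n}-Tx_{j_n}\|\ge\ep$ for all $n$. By the very definition of a weakly $p$-Cauchy sequence the sequence $(x_{k_n}-x_{j_n})_n$ lies in $l^w_p(X)$ (for $p=\infty$, in $c^w_0(X)$), hence by (1) the sequence $(Tx_{k_n}-Tx_{j_n})_n=(T(x_{k_n}-x_{j_n}))_n$ is unconditionally $p$-summable (for $p=\infty$, norm null). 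But an unconditionally $p$-summable sequence is norm null (its $m$-th term has norm at most the $m$-th tail supremum, which tends to $0$), contradicting $\|Tx_{k_n}-Tx_{j_n}\|\ge\ep$.

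The heart of the matter is $(2)\Rightarrow(1)$. The case $p=\infty$ is easy: a weakly null sequence $(x_n)_n$ is weakly $\infty$-convergent to $0$, so (2) makes $(Tx_n)_n$ norm convergent; since $x_n\to0$ weakly forces $Tx_n\to0$ weakly, the norm limit is $0$, i.e.\ $T$ is completely continuous. For $1\le p<\infty$ I would argue the contrapositive by a gliding-hump construction. Suppose $(x_n)_n\in l^w_p(X)$ while $(Tx_n)_n\notin l^u_p(Y)$. The tail suprema $q_m:=\sup\{(\sum_{n\ge m}|\langle y^*,Tx_n\rangle|^p)^{1/p}:y^*\in B_{Y^*}\}$ are non-increasing and, by assumption, do not tend to $0$, so they converge to some $c>0$. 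Since for each fixed $m$ the series $\sum_{n\ge m}|\langle y^*,Tx_n\rangle|^p$ converges (because $(Tx_n)_n\in l^w_p(Y)$) and may therefore be truncated, I can recursively pick integers $1=r_1<r_2<\cdots$ and functionals $y_k^*\in B_{Y^*}$ with
\[
\sigma_k:=\Big(\sum_{n=r_k}^{r_{k+1}-1}|\langle y_k^*,Tx_n\rangle|^p\Big)^{1/p}>c/2\qquad(k=1,2,\dots).
\]

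On the $k$-th block set $a_n:=\operatorname{sgn}\langle y_k^*,Tx_n\rangle\,|\langle y_k^*,Tx_n\rangle|^{p-1}\sigma_k^{1-p}$ for $r_k\le n<r_{k+1}$ (for $p=1$, let $a_n$ be unimodular and aligned with $\langle y_k^*,Tx_n\rangle$). Using the identity $(p-1)p^*=p$ one checks $\|(a_n)_{r_k\le n<r_{k+1}}\|_{p^*}=1$ (for $p=1$, $\le1$ in the sup norm) while $\sum_{n=r_k}^{r_{k+1}-1}a_n\langle y_k^*,Tx_n\rangle=\sigma_k$. Now put $w_k:=\sum_{n=r_k}^{r_{k+1}-1}a_nx_n\in X$, so that $\|Tw_k\|\ge\langle y_k^*,Tw_k\rangle=\sigma_k>c/2$ for every $k$. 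On the other hand, writing $\xi_k:=\sum_{n=r_k}^{r_{k+1}-1}a_ne_n$, the blocks are pairwise disjoint, and Hölder on each block gives $|\langle\eta,\xi_k\rangle|\le(\sum_{n=r_k}^{r_{k+1}-1}|\eta_n|^p)^{1/p}$ for every $\eta$ in the unit ball of $l_p$ (resp.\ of $l_1$), whence $\sum_k|\langle\eta,\xi_k\rangle|^p\le\sum_n|\eta_n|^p\le1$; thus $(\xi_k)_k$ is weakly $p$-summable in $l_{p^*}$ (resp.\ in $c_0$). Composing with the operator $l_{p^*}\to X$ (resp.\ $c_0\to X$) that corresponds to $(x_n)_n$ under Grothendieck's isometry, we obtain $(w_k)_k\in l^w_p(X)$, i.e.\ $(w_k)_k$ is weakly $p$-convergent to $0$, in particular weakly $p$-Cauchy. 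Since $w_k\to0$ weakly we have $Tw_k\to0$ weakly, so $(Tw_k)_k$ cannot be norm convergent (its only possible norm limit would be $0$, which is excluded by $\|Tw_k\|>c/2$). This contradicts (2) (as well as (3)), and completes the cycle.

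I expect the one genuine obstacle to be this last step. One cannot simply reuse $(x_n)_n$, because the failure of $(Tx_n)_n$ to be unconditionally $p$-summable is much weaker than the failure of $\|Tx_n\|\to0$; the passage to the block sequence $(w_k)_k$ is therefore unavoidable, and the delicate point is to choose the block coefficients so that simultaneously $\|Tw_k\|$ stays bounded away from $0$ and $(w_k)_k$ remains weakly $p$-summable — this is exactly what the conjugate-exponent coefficients combined with Hölder on disjoint blocks accomplish. Everything else, including the reduction of $(2)\Rightarrow(1)$ to the case $p<\infty$, is routine.
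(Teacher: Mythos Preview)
Your argument is correct, but it takes a genuinely different route from the paper's. The paper proves the cycle $(1)\Rightarrow(2)\Rightarrow(3)\Rightarrow(1)$ and leans throughout on Theorem~\ref{3.1} (the characterization of unconditionally $p$-converging operators via compactness of $TS$ for $S\in\mathcal L(l_{p^*},X)$). For $(1)\Rightarrow(2)$ the paper picks the operator $S$ corresponding to a weakly $p$-summable sequence, invokes compactness of $TS$, and concludes that $(TSe_n)_n$ is norm null; for $(3)\Rightarrow(1)$ it finds a non-compact $TS$, obtains a weakly null sequence $(z_n)$ in $l_{p^*}$ (or $c_0$) with $\|TSz_n\|$ bounded below, and then uses the Bessaga--Pe{\l}czy\'nski selection principle to pass to a subsequence equivalent to the unit vector basis, so that $(SRe_n)_n$ is weakly $p$-summable yet $(TSRe_n)_n$ does not norm-converge. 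By contrast, your cycle $(1)\Rightarrow(3)\Rightarrow(2)\Rightarrow(1)$ is entirely elementary: the step $(1)\Rightarrow(3)$ uses only the trivial observation that unconditionally $p$-summable sequences are norm null, and your $(2)\Rightarrow(1)$ replaces the appeal to Bessaga--Pe{\l}czy\'nski by an explicit gliding-hump construction with conjugate-exponent block coefficients. The trade-off is that the paper's argument is shorter and more conceptual once Theorem~\ref{3.1} is in hand, while yours is self-contained (it does not require Theorem~\ref{3.1} or any subsequence-extraction principle beyond the definition of $l^w_p$) and makes the block sequence completely explicit; your approach also treats $p=1$ and $1<p<\infty$ uniformly in the hump construction, whereas the paper's route implicitly relies on the basis structure of $l_{p^*}$ and $c_0$.
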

\begin{proof}
$(1)\Rightarrow (2)$. Suppose that $(x_{n})_{n}$ is weakly $p$-convergent in $X$. We may assume that $(x_{n})_{n}$ is weakly $p$-summable.
Then there exists an operator $S:l_{p^{*}} \rightarrow X, 1<p<\infty$($S:c_{0}\rightarrow X$ for $p=1$) such that
$Se_{n}=x_{n}(n=1,2,...)$. By Theorem \ref{3.1}, $TS$ is compact and hence $(TSe_{n})_{n}$ is relatively compact. Consequently, $\lim_{n\rightarrow \infty}\|TSe_{n}\|=0$.

$(2)\Rightarrow (3)$. Let $(x_{n})_{n}$ be a weakly $p$-Cauchy sequence in $X$. By (2), for each pair of strictly increasing sequences $(k_{n})_{n}$ and $(j_{n})_{n}$ of positive integers, the sequence $(Tx_{k_{n}}-Tx_{j_{n}})_{n}$ converges to $0$ in norm and hence $(Tx_{n})_{n}$ converges in norm.

$(3)\Rightarrow (1)$. Suppose that $T$ is not unconditionally $p$-converging. By Theorem \ref{3.1}, the operator $TS$ is non-compact for some operator $S\in \mathcal{L}(l_{p^{*}},X)(1<p<\infty)$($\mathcal{L}(c_{0},X)$ for $p=1$). Then there exists a weakly null sequence $(z_{n})_{n}$ in $l_{p^{*}}(1<p<\infty)$($c_{0}$ for $p=1$) such that $\|TSz_{n}\|>\epsilon_{0}>0(n=1,2,...)$. By passing to subsequences, we may assume that the sequence $(z_{n})_{n}$ is equivalent to the unit vector basis $(e_{n})_{n}$ in $l_{p^{*}}$. Let $R: l_{p^{*}}\rightarrow l_{p^{*}}$ be an isomorphic embedding with $Re_{n}=z_{n}(n=1,2,...)$. Let $x_{n}=SRe_{n}$. Then $(x_{n})_{n}$ is weakly $p$-summable in $X$ and hence weakly $p$-Cauchy. By the assumption, $(Tx_{n})_{n}$ converges to $0$ in norm, but $\|Tx_{n}\|>\epsilon_{0}>0(n=1,2,...)$, which is a contradiction.

\end{proof}
It should be noted that Theorem \ref{3.2}(2) is the definition of the so called $p$-converging operators defined by J. M. F. Castillo and F. S\'{a}nchez in \cite{CS2}. In this note, we use the terminology unconditionally $p$-converging operators instead of $p$-converging operators.

Recall that a subset $K$ of a Banach space $X$ is \textit{relatively weakly $p$-compact} ($1\leq p<\infty$) if
$K$ is contained in $S(B_{l_{p^{*}}})$ for $1<p<\infty(S(B_{c_{0}})$ for $p=1$) for some operator $S$ from $l_{p^{*}}(c_{0}$ for $p=1$) into $X$ (see \cite{SK}).
A subset $K$ of a Banach space $X$ is said to be \textit{relatively weakly $p$-precompact} if every sequence in $K$ admits a weakly $p$-convergent subsequence (see \cite{CS1}).
Bessaga-Pe{\l}czy\'{n}ski Selection Principle yields that every relatively weakly $p$-compact set is relatively weakly $p$-precompact for any $1<p<\infty$. But the converse needs not to be true. Let $X=(\sum_{n=1}^{\infty}l^{n}_{1})_{p^{*}}(1<p<\infty)$. It follows from Bessaga-Pe{\l}czy\'{n}ski Selection Principle that $B_{X}$ is relatively weakly $p$-precompact. But $B_{X}$ is not relatively weakly $p$-compact because $X$ is not isomorphic to a quotient of $l_{p^{*}}$. Another counterexample is $L_{p}(1<p<\infty,p\neq 2)$. For each $1<p<\infty,p\neq 2$, $B_{L_{p}}$ is relatively weakly $r$-precompact, where $r=\max(p^{*},2)$, but is not relatively weakly $r$-compact because such $L_{p}$ is not isomorphic to a quotient of $l_{r^{*}}$.

By using the weakly $p$-Cauchy sequences, we can correspondingly define the conditionally weakly $p$-compact sets as follows:
\begin{defn}
Let $1\leq p\leq \infty$. We say that a subset $K$ of a Banach space $X$ is \textit{conditionally weakly $p$-compact} if every sequence in $K$ admits a weakly $p$-Cauchy subsequence.
\end{defn}

The following result,which follows from Theorem \ref{3.2}, says that unconditionally $p$-converging operators are precisely those operators that send conditionally weakly $p$-compact subsets onto relatively norm compact subsets.

\begin{thm}\label{3.7}
Let $T\in \mathcal{L}(X,Y)$ and $1\leq p<\infty$. The following statements are equivalent:
\item[(1)]$T$ is unconditionally $p$-converging;
\item[(2)]$T$ maps relatively weakly $p$-precompact subsets onto relatively norm compact subsets;
\item[(3)]$T$ maps conditionally weakly $p$-compact subsets onto relatively norm compact subsets;
\item[(4)]$T$ maps relatively weakly $p$-compact subsets onto relatively norm compact subsets.
\end{thm}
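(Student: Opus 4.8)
The plan is to close the cycle of implications $(1)\Rightarrow(3)\Rightarrow(2)\Rightarrow(1)$ and, separately, to verify the equivalence $(1)\Leftrightarrow(4)$; three of these four links are immediate from results already in hand. For $(1)\Leftrightarrow(4)$ I would simply reread Theorem \ref{3.1}: if $K$ is relatively weakly $p$-compact, say $K\subseteq S(B_{l_{p^{*}}})$ (resp. $K\subseteq S(B_{c_{0}})$ when $p=1$) for some operator $S$ into $X$, then $TS$ is compact by Theorem \ref{3.1}, whence $T(K)\subseteq TS(B_{l_{p^{*}}})$ is relatively norm compact; conversely, since $S(B_{l_{p^{*}}})$ is by definition relatively weakly $p$-compact for every $S\in\mathcal{L}(l_{p^{*}},X)$ (resp. $S\in\mathcal{L}(c_{0},X)$), statement $(4)$ asserts precisely that every such $TS$ has relatively norm compact range, i.e. is compact, and Theorem \ref{3.1} gives $(1)$ back. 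For $(1)\Rightarrow(3)$ I would argue sequentially: given a conditionally weakly $p$-compact set $K$ and a sequence $(Tx_{n})_{n}$ in $T(K)$ with $x_{n}\in K$, extract a weakly $p$-Cauchy subsequence $(x_{n_{k}})_{k}$ and apply Theorem \ref{3.2} to conclude that $(Tx_{n_{k}})_{k}$ is norm convergent; since every sequence in $T(K)$ thus has a norm-convergent subsequence, $T(K)$ is relatively norm compact. And $(3)\Rightarrow(2)$ is the trivial remark that a weakly $p$-convergent sequence is weakly $p$-Cauchy, so every relatively weakly $p$-precompact set is conditionally weakly $p$-compact.

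The only step carrying real content is $(2)\Rightarrow(1)$, which I would establish by contraposition. If $T$ is not unconditionally $p$-converging, then repeating the construction in the proof of $(3)\Rightarrow(1)$ of Theorem \ref{3.2} (fix $S$ with $TS$ non-compact, pass to a weakly convergent sequence witnessing non-compactness, take consecutive differences, then a subsequence equivalent to the unit vector basis of $l_{p^{*}}$, resp. $c_{0}$) yields a weakly $p$-summable sequence $(x_{n})_{n}$ in $X$ with $\inf_{n}\|Tx_{n}\|>0$. The set $K=\{x_{n}:n\in\mathbb{N}\}$ is then relatively weakly $p$-precompact, because any sequence drawn from $K$ has a subsequence which is constant or is a subsequence of $(x_{n})_{n}$, hence weakly $p$-summable, hence weakly $p$-convergent; but $T(K)=\{Tx_{n}:n\in\mathbb{N}\}$ is not relatively norm compact, since $(Tx_{n})_{n}$ is weakly null while staying norm-bounded away from $0$, so no subsequence of it converges in norm. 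This contradicts $(2)$.

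I expect no deep obstacle: once Theorems \ref{3.1} and \ref{3.2} are in place, the proof amounts to matching the three notions of ``$p$-compactness'' against the hypothesis of each statement. The one point requiring care is the extraction, in $(2)\Rightarrow(1)$, of a weakly $p$-summable sequence whose images under $T$ are bounded below in norm: for $1<p<\infty$ this rests on reflexivity of $l_{p^{*}}$ and the Bessaga-Pe{\l}czy\'{n}ski Selection Principle, while for $p=1$ one reads ``weakly convergent'' through coordinatewise convergence in $c_{0}$, exactly as in the proof of Theorem \ref{3.2}. I would also record once, for use in $(1)\Rightarrow(3)$ and $(2)\Rightarrow(1)$, the elementary fact that a subset of a Banach space is relatively norm compact if and only if every sequence in it admits a norm-convergent subsequence.
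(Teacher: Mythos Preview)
Your proposal is correct and matches the paper's approach: the paper gives no explicit proof, merely stating that the result ``follows from Theorem \ref{3.2}'', and your argument is precisely the natural unpacking of that remark via Theorems \ref{3.1} and \ref{3.2}. One small simplification: in $(2)\Rightarrow(1)$ you need not invoke the Bessaga--Pe{\l}czy\'{n}ski construction from the proof of Theorem \ref{3.2}; it suffices to negate condition (2) of Theorem \ref{3.2} directly to obtain a weakly $p$-summable sequence $(x_{n})_{n}$ with $(Tx_{n})_{n}$ not norm-null, then pass to a subsequence bounded below in norm.
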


\begin{defn}
Let $X$ be a Banach space and $1\leq p<\infty$. We say that a bounded subset $K$ of $X^{*}$ is \textit{weakly $p$-limited} if
$\lim_{n\rightarrow \infty}\sup_{x^{*}\in K}|<x^{*},x_{n}>|=0$ for every $(x_{n})_{n}\in l^{w}_{p}(X)$.
\end{defn}
The following result, an immediate consequence of Theorem \ref{3.2}, is a characterization of unconditionally $p$-converging operators in terms of weakly $p$-limited subsets.

\begin{thm}\label{3.10}
Let $1\leq p<\infty$. The following are equivalent for an operator $T:X\rightarrow Y$:
\item[(1)]$T$ is unconditionally $p$-converging;
\item[(2)]$T^{*}$ maps bounded subsets of $Y^{*}$ onto weakly $p$-limited subsets of $X^{*}$.
\end{thm}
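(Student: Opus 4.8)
The plan is to unwind both conditions to the same statement about sequences, using Theorem \ref{3.2} as the bridge. First I would treat the implication $(1)\Rightarrow(2)$. Let $B\subseteq Y^{*}$ be bounded, say $B\subseteq \lambda B_{Y^{*}}$; I must show $T^{*}(B)$ is weakly $p$-limited in $X^{*}$, i.e. that $\lim_{n}\sup_{x^{*}\in T^{*}(B)}|\langle x^{*},x_{n}\rangle| = 0$ for every $(x_{n})_{n}\in l^{w}_{p}(X)$. Rewriting, $\sup_{y^{*}\in B}|\langle T^{*}y^{*},x_{n}\rangle| = \sup_{y^{*}\in B}|\langle y^{*},Tx_{n}\rangle| \le \lambda \|Tx_{n}\|$. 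Since $(x_{n})_{n}\in l^{w}_{p}(X)$ it is in particular weakly $p$-convergent (to $0$), so by Theorem \ref{3.2} $(Tx_{n})_{n}$ converges in norm; being weakly null, $Tx_{n}$ is weakly null, and a norm-convergent weakly-null sequence converges to $0$ in norm. Hence $\lambda\|Tx_{n}\|\to 0$, giving the claim. (Equivalently one invokes that $(Tx_{n})_{n}\in l^{u}_{p}(Y)\subseteq c_{0}(Y)$ directly from the definition of unconditionally $p$-converging.)

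For $(2)\Rightarrow(1)$ I would verify Theorem \ref{3.2}(2): given $(x_{n})_{n}$ weakly $p$-convergent in $X$, reduce to $(x_{n})_{n}\in l^{w}_{p}(X)$ weakly $p$-summable and show $\|Tx_{n}\|\to 0$. Apply (2) to the bounded set $B = B_{Y^{*}}$: then $T^{*}(B_{Y^{*}})$ is weakly $p$-limited, so $\lim_{n}\sup_{y^{*}\in B_{Y^{*}}}|\langle T^{*}y^{*},x_{n}\rangle| = 0$. But $\sup_{y^{*}\in B_{Y^{*}}}|\langle T^{*}y^{*},x_{n}\rangle| = \sup_{y^{*}\in B_{Y^{*}}}|\langle y^{*},Tx_{n}\rangle| = \|Tx_{n}\|$, so $\|Tx_{n}\|\to 0$, as needed; by Theorem \ref{3.2} this yields that $T$ is unconditionally $p$-converging.

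Neither direction has a genuine obstacle — the whole content is the identity $\langle T^{*}y^{*},x_{n}\rangle = \langle y^{*},Tx_{n}\rangle$ together with the characterization already proved in Theorem \ref{3.2}. The only point requiring a word of care is the reduction in $(2)\Rightarrow(1)$ from a weakly $p$-convergent sequence to a weakly $p$-summable one (replace $x_{n}$ by $x_{n}-x$ where $x$ is the weak limit, which does not affect norm convergence of $(Tx_{n})_{n}$ up to the constant sequence $(Tx)_{n}$), and the observation that testing against the full unit ball $B_{Y^{*}}$ recovers exactly the norm $\|Tx_{n}\|$ — i.e. that it suffices to check the weakly $p$-limited condition for the single bounded set $B_{Y^{*}}$, since any bounded set is a scalar multiple of a subset of it.
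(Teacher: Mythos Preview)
Your proposal is correct and matches the paper's approach exactly: the paper states the theorem as ``an immediate consequence of Theorem~\ref{3.2}'' without giving an explicit proof, and your argument is precisely the natural unpacking of that remark via the duality identity $\langle T^{*}y^{*},x_{n}\rangle=\langle y^{*},Tx_{n}\rangle$ together with $\sup_{y^{*}\in B_{Y^{*}}}|\langle y^{*},Tx_{n}\rangle|=\|Tx_{n}\|$.
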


J.M.F.Castillo and F.S\'{a}nchez said that a Banach space $X\in W_{p}(1\leq p<\infty)$ if any bounded sequence in $X$ admits a weakly $p$-convergent subsequence (see \cite{CS}). We use this notion to characterize weakly $p$-limited sets.

\begin{thm}\label{3.8}
Let $1<p<\infty$ and $X$ be a Banach space. The following statements are equivalent about a bounded subset $K$ of $X^{*}$:
\item[(1)]$K$ is weakly $p$-limited;
\item[(2)]For all spaces $Y\in W_{p}$ and for every operator $T$ from $Y$ into $X$, the subset $T^{*}(K)$ is relatively norm compact;
\item[(3)]For every operator $T$ from $l_{p^{*}}$ into $X$, the subset $T^{*}(K)$ is relatively norm compact.
\end{thm}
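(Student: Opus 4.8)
The plan is to establish the cyclic chain $(1)\Rightarrow(2)\Rightarrow(3)\Rightarrow(1)$, organized around the \emph{evaluation operator} of $K$. Since $K$ is a bounded subset of $X^{*}$, the formula $j_{K}x=(\langle x^{*},x\rangle)_{x^{*}\in K}$ defines a bounded operator $j_{K}\colon X\to\ell_{\infty}(K)$, and the definition of ``weakly $p$-limited'' says precisely that $j_{K}$ carries each $(x_{n})_{n}\in l^{w}_{p}(X)$ to a norm null sequence of $\ell_{\infty}(K)$. Applying this to $(x_{n}-x)_{n}$ shows that $j_{K}$ maps weakly $p$-convergent sequences to norm convergent ones, so by Theorem \ref{3.2} we get the key reformulation: \emph{$K$ is weakly $p$-limited if and only if $j_{K}$ is unconditionally $p$-converging.} The rest of the argument is built on this.

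For $(1)\Rightarrow(2)$ I would argue as follows. Given $Y\in W_{p}$ and $T\colon Y\to X$, a bounded operator preserves weak $p$-summability and hence weak $p$-convergence, so (either directly via Theorem \ref{3.2} or by the ideal property) the composite $j_{K}T\colon Y\to\ell_{\infty}(K)$ is again unconditionally $p$-converging. Now invoking $Y\in W_{p}$: every bounded sequence in $Y$ has a weakly $p$-convergent subsequence, which $j_{K}T$ sends to a norm convergent sequence, so $j_{K}T$ is a \emph{compact} operator. By Schauder's theorem $(j_{K}T)^{*}\colon\ell_{\infty}(K)^{*}\to Y^{*}$ is compact; and since a direct computation gives $(j_{K}T)^{*}\delta_{x^{*}}=T^{*}x^{*}$ for the coordinate functionals $\delta_{x^{*}}\in B_{\ell_{\infty}(K)^{*}}$ ($x^{*}\in K$), we obtain $T^{*}(K)\subseteq(j_{K}T)^{*}\big(B_{\ell_{\infty}(K)^{*}}\big)$, which is relatively norm compact.

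For $(2)\Rightarrow(3)$ I would just check $l_{p^{*}}\in W_{p}$ and specialize (2) to $Y=l_{p^{*}}$: by reflexivity a bounded sequence in $l_{p^{*}}$ reduces to a weakly null one, and a weakly null sequence in $l_{p^{*}}$ has, after passing to a subsequence, either geometrically decaying norms or --- by the Bessaga-Pe{\l}czy\'{n}ski selection principle --- is equivalent to a normalized block basic sequence of $(e_{n})_{n}$, hence to $(e_{n})_{n}$ itself; since $(e_{n})_{n}$ is weakly $p$-summable in $l_{p^{*}}$, in both cases a weakly $p$-summable subsequence appears. For $(3)\Rightarrow(1)$, given $(x_{n})_{n}\in l^{w}_{p}(X)$ I would use Grothendieck's correspondence to pick $T\in\mathcal{L}(l_{p^{*}},X)$ with $Te_{n}=x_{n}$; then $T^{*}\colon X^{*}\to(l_{p^{*}})^{*}=l_{p}$ is given by $T^{*}x^{*}=(\langle x^{*},x_{n}\rangle)_{n}$, so $T^{*}(K)=\{(\langle x^{*},x_{n}\rangle)_{n}:x^{*}\in K\}\subseteq l_{p}$. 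By (3) this set is relatively norm compact in $l_{p}$, hence has uniformly small tails; in particular $\sup_{x^{*}\in K}|\langle x^{*},x_{n}\rangle|\to 0$ as $n\to\infty$, and since $(x_{n})_{n}$ was arbitrary, $K$ is weakly $p$-limited.

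The main obstacle will be $(1)\Rightarrow(2)$: the crux is to see that $j_{K}$ is unconditionally $p$-converging (via Theorem \ref{3.2}), that $Y\in W_{p}$ is exactly what upgrades $j_{K}T$ to a compact operator, and then to transfer this compactness down to $T^{*}(K)$ using Schauder's theorem and the coordinate functionals. A secondary point needing care is the verification that $l_{p^{*}}\in W_{p}$, which genuinely uses $p>1$ (it fails for $c_{0}$), and this is why the statement is restricted to $1<p<\infty$.
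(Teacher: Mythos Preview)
Your proof is correct, and for $(2)\Rightarrow(3)$ and $(3)\Rightarrow(1)$ it essentially matches the paper (with the added detail on $l_{p^{*}}\in W_{p}$, which the paper simply asserts). The real difference is in $(1)\Rightarrow(2)$. The paper argues by contradiction: it takes a sequence $(x^{*}_{n})_{n}$ in $K$ with $(T^{*}x^{*}_{n})_{n}$ having no norm convergent subsequence, uses that $Y\in W_{p}$ forces $Y$ (hence $Y^{*}$) reflexive to extract a weak limit $y^{*}$ of $(T^{*}x^{*}_{n})_{n}$, norms the gap $T^{*}x^{*}_{n}-y^{*}$ by vectors $y_{n}\in B_{Y}$, then invokes $W_{p}$ again to make $(y_{n})_{n}$ weakly $p$-convergent and reaches a contradiction with the weakly $p$-limited hypothesis via a triangle-inequality estimate. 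Your route is more structural: you encode the hypothesis as ``$j_{K}$ is unconditionally $p$-converging'' (via Theorem~\ref{3.2}), observe that $Y\in W_{p}$ upgrades $j_{K}T$ to a compact operator, and then read off the conclusion from Schauder's theorem and the inclusion $T^{*}(K)\subseteq (j_{K}T)^{*}(B_{\ell_{\infty}(K)^{*}})$. Your argument is cleaner and avoids explicitly invoking reflexivity of $Y$; the paper's argument is more elementary in that it needs no adjoint/compactness machinery beyond the definitions.
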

\begin{proof}
$(1)\Rightarrow (2)$. Let $T$ be an operator from $Y\in W_{p}$ into $X$ such that $T^{*}(K)$ is not relatively norm compact.
Then there exists a sequence $(x^{*}_{n})_{n}$ in $K$ such that $(T^{*}x^{*}_{n})_{n}$ admits no norm convergent subsequences.
Since $Y^{*}$ is reflexive, by passing to a subsequence if necessary we may assume that $(T^{*}x^{*}_{n})_{n}$ converges weakly to some $y^{*}\in Y^{*}$ and $\|T^{*}x^{*}_{n}-y^{*}\|>\epsilon_{0}$ for some $\epsilon_{0}>0$ and for all $n\in \mathbb{N}$. For each $n$, choose $y_{n}$ with $\|y_{n}\|\leq 1$ such that $|<T^{*}x^{*}_{n}-y^{*},y_{n}>|>\epsilon_{0}$. Since $Y\in W_{p}$, by passing to a subsequence again if necessary one can assume that the sequence $(y_{n})_{n}$ is weakly $p$-convergent to some $y\in Y$. Thus, by hypothesis, we get $\lim_{n\rightarrow \infty}\sup_{x^{*}\in K}|<x^{*},Ty_{n}-Ty>|=0$. Note that, for each $n\in \mathbb{N}$,
$$|<T^{*}x^{*}_{n}-y^{*},y_{n}>|\leq |<x^{*}_{n},Ty_{n}-Ty>|+|<x^{*}_{n},Ty>-<y^{*},y>|+|<y^{*},y-y_{n}>|.$$
This implies that $\lim_{n\rightarrow \infty}<T^{*}x^{*}_{n}-y^{*},y_{n}>=0$, which is a contradiction.

$(2)\Rightarrow (3)$ is immediate because $l_{p^{*}}\in W_{p}$;

$(3)\Rightarrow (1)$. Let $(x_{n})_{n}\in l^{w}_{p}(X)$. Then there exists an operator $T$ from $l_{p^{*}}$ into $X$ such that $Te_{n}=x_{n}$ for all $n\in \mathbb{N}$. It follows from (3) that $T^{*}(K)$ is relatively norm compact. By the well-known characterization of relatively norm compact subsets of $l_{p}$, one can derive that $\lim_{n\rightarrow \infty}\sup_{x^{*}\in K}|<x^{*},x_{n}>|=0$.

\end{proof}

By Theorem \ref{3.8}, we see that relatively norm compact sets are weakly $p$-limited. But Theorem \ref{3.10} demonstrates that there are many weakly $p$-limited sets which are not relatively norm compact. Indeed, for each $1<p<\infty$ and for each $1<r<p^{*}$, the identity map $I_{r}$ on $l_{r}$ is unconditionally $p$-converging and hence the unit ball $B_{l_{r^{*}}}$ of $l_{r^{*}}$ is weakly $p$-limited.
In the following result, we use biorthogonal sequences to characterize weakly $p$-limited sets which are not relatively norm compact.

\begin{thm}\label{3.9}
Suppose that $X$ is reflexive and $K$ is a weakly $p$-limited subset of $X^{*}$. If $K$ is not relatively norm compact, then there exits a seminormalized biorthogonal sequence $(x_{n},x_{n}^{*})_{n}$ in $X\times (K-K)$ such that $(x_{n}^{*})_{n}$ is a basic sequence and $(x_{n})_{n}$ has no weakly $p$-Cauchy subsequence.
\end{thm}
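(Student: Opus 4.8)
The plan is to build the biorthogonal system by a Bessaga--Pe\l czy\'nski-type block argument and then deduce the absence of weakly $p$-Cauchy subsequences by testing the weakly $p$-limitedness of $K-K$ against the dual sequence itself.

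\emph{Step 1 (a seminormalized weakly null basic sequence inside $K-K$).} Since $K$ is not relatively norm compact, I would pick a sequence $(y_n^*)_n$ in $K$ with no norm convergent subsequence. As $X$, hence $X^*$, is reflexive, after passing to a subsequence we may assume $y_n^*\to y^*$ weakly for some $y^*\in X^*$, and, deleting finitely many terms, $\|y_n^*-y^*\|\geq \epsilon_0>0$ for all $n$. Then $u_n^*:=y_n^*-y^*$ is seminormalized and weakly null, so by the Bessaga--Pe\l czy\'nski selection principle we may assume, after a further subsequence, that $(u_n^*)_n$ is basic. Set $x_n^*:=u_{2n}^*-u_{2n-1}^*=y_{2n}^*-y_{2n-1}^*\in K-K$. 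Being a block basic sequence of $(u_n^*)_n$, $(x_n^*)_n$ is basic; it is weakly null and norm bounded, and it is bounded below because the coordinate functionals of a basic sequence are uniformly bounded (giving $\|x_n^*\|\geq \epsilon_0/(2c)$, $c$ the basis constant of $(u_n^*)_n$). Thus $(x_n^*)_n$ is a seminormalized basic sequence in $K-K$.

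\emph{Step 2 (biorthogonal vectors in $X$).} Let $(\varphi_n)_n$ be the coordinate functionals of the basic sequence $(x_n^*)_n$, defined on its closed linear span, so $\varphi_m(x_n^*)=\delta_{mn}$ and $\sup_n\|\varphi_n\|<\infty$. Extending each $\varphi_n$ by Hahn--Banach to $X^{**}$ and using $X^{**}=X$, we obtain a bounded sequence $(x_n)_n$ in $X$ with $\langle x_m^*,x_n\rangle=\delta_{mn}$. Since $1=\langle x_n^*,x_n\rangle\leq\|x_n^*\|\,\|x_n\|$ and $\sup_n\|x_n^*\|<\infty$, the sequence $(x_n)_n$ is also bounded below. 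Hence $(x_n,x_n^*)_n$ is a seminormalized biorthogonal sequence in $X\times(K-K)$ with $(x_n^*)_n$ basic.

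\emph{Step 3 (no weakly $p$-Cauchy subsequence).} Suppose, for contradiction, that some subsequence $(x_{n_k})_k$ is weakly $p$-Cauchy. Applying the definition to the strictly increasing index sequences $(n_{2k})_k$ and $(n_{2k-1})_k$, the sequence $(x_{n_{2k}}-x_{n_{2k-1}})_k$ lies in $l^w_p(X)$. Now $K-K$ is weakly $p$-limited, since $\sup_{z^*\in K-K}|\langle z^*,z\rangle|\leq 2\sup_{x^*\in K}|\langle x^*,z\rangle|$ for every $z\in X$; hence $\sup_{z^*\in K-K}|\langle z^*,x_{n_{2k}}-x_{n_{2k-1}}\rangle|\to 0$ as $k\to\infty$. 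On the other hand $x_{n_{2k}}^*\in K-K$, and since $n_{2k}\neq n_{2k-1}$ we have $\langle x_{n_{2k}}^*,x_{n_{2k}}-x_{n_{2k-1}}\rangle=1-0=1$ for every $k$, a contradiction. Therefore $(x_n)_n$ has no weakly $p$-Cauchy subsequence.

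The main obstacle is Step 1: the obvious seminormalized weakly null sequence witnessing non-compactness, namely $y_n^*-y^*$, sits in $K-\{y^*\}$ rather than in $K-K$, and $y^*$ need not belong to $K$. Passing first to a basic subsequence and then to consecutive differences resolves this, because block bases of basic sequences remain basic and stay seminormalized; the remainder is bookkeeping together with the simple but decisive point that the dual functionals $x_n^*$ themselves belong to $K-K$ and detect the difference sequence.
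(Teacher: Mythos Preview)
Your proof is correct and follows essentially the same strategy as the paper's: build a seminormalized weakly null basic sequence in $K-K$, take coordinate functionals (pulled back to $X$ by reflexivity), and test the weakly $p$-limitedness of $K-K$ against differences of a putative weakly $p$-Cauchy subsequence to reach a contradiction.

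The one noteworthy difference is in Step~1. You apply Bessaga--Pe\l czy\'nski to $u_n^*=y_n^*-y^*$ (which lies only in $K-\{y^*\}$) and then pass to the block sequence $u_{2n}^*-u_{2n-1}^*$ to force membership in $K-K$. The paper reverses the order: since the weakly convergent sequence $(f_n)_n\subset K$ has no norm convergent subsequence, it is not norm Cauchy, so one can directly choose strictly increasing $(k_n)_n,(j_n)_n$ with $\|f_{k_n}-f_{j_n}\|>\epsilon_0$; the differences $x_n^*:=f_{k_n}-f_{j_n}$ already lie in $K-K$, are seminormalized and weakly null, and only then is Bessaga--Pe\l czy\'nski applied. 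This sidesteps what you flagged as ``the main obstacle'' and avoids the block-basis bookkeeping entirely, though your route is equally valid.
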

\begin{proof}
Suppose that $K$ is not relatively norm compact, and let $(f_{n})_{n}$ be a sequence in $K$ with no norm convergent subsequence. Since $X$ is reflexive, we may assume that the sequence $(f_{n})_{n}$ converges weakly. Then there exist
two strictly increasing sequences $(k_{n})_{n}$ and $(j_{n})_{n}$ of positive integers and $\epsilon_{0}>0$ such that $\|f_{k_{n}}-f_{j_{n}}\|>\epsilon_{0}$ for all $n\in \mathbb{N}$. Let $x^{*}_{n}=f_{k_{n}}-f_{j_{n}}\in (K-K)$. Then $(x^{*}_{n})_{n}$ is weakly null. By Bessaga-Pe{\l}czy\'{n}ski Selection Principle, we can assume that $(x^{*}_{n})_{n}$ is a basic sequence. Let $(x^{**}_{n})_{n}$ be the associated sequence of coefficient functionals, and for each $n\in \mathbb{N}$, let $x_{n}\in X$ be a Hahn-Banach extension of $x^{**}_{n}$ to all of $X^{*}$. Then the sequence $(x_{n},x_{n}^{*})_{n}$ is seminormalized and biorthogonal.

It remains to show that $(x_{n})_{n}$ has no weakly $p$-Cauchy subsequence. If $(y_{n})_{n}$ is a weakly $p$-Cauchy subsequence of $(x_{n})_{n}$, then $(y_{n+1}-y_{n})_{n}$ is weakly $p$-summable. Since $K$ is weakly $p$-limited, the subset $K-K$ is also weakly $p$-limited, which implies that $\lim_{n\rightarrow \infty}\sup_{k}|<x^{*}_{k},y_{n+1}-y_{n}>|=0$. This is impossible because $(x_{n},x_{n}^{*})_{n}$ is biorthogonal.

\end{proof}

A consequence of Theorem \ref{3.9} is that for any $1<p<\infty$, there exists a relatively weakly compact sequence that admits no weakly $p$-Cauchy subsequence. Moreover, it should be noted that the converse of Theorem \ref{3.9} is true. Actually, it is easy to verify that if $K$ is a subset of $X^{*}$ and the sequence $(x_{n},x_{n}^{*})_{n}$ in $X\times (K-K)$ is biorthogonal with $\sup_{n}\|x_{n}\|<\infty$, then $K$ is not relatively norm compact.

The following result shows that an operator is unconditionally $p$-converging not precisely when its second adjoint is.

\begin{thm}\label{3.5}
\item[(1)]Let $T\in \mathcal{L}(X,Y)$ and $1\leq p\leq\infty$. If $T^{**}$ is unconditionally $p$-converging, then $T$ is unconditionally $p$-converging;
\item[(2)]For each $1\leq p\leq\infty$, there exists an unconditionally $p$-converging operator $T$, but $T^{**}$ is not unconditionally $p$-converging.
\end{thm}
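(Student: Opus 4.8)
The plan for part (1) is to use the characterization from Theorem \ref{3.2}: an operator is unconditionally $p$-converging if and only if it sends weakly $p$-convergent sequences to norm-convergent sequences. Given a weakly $p$-convergent sequence $(x_n)_n$ in $X$, after a translation we may assume it is weakly $p$-summable. Viewing $X$ canonically inside $X^{**}$ via $\kappa_X$, the sequence $(\kappa_X x_n)_n$ is weakly $p$-summable in $X^{**}$ (duality pairings with functionals in $B_{X^{***}}$ restrict to pairings with functionals in $B_{X^*}$, so the weak $\ell_p$-norm can only decrease). Since $T^{**}$ is unconditionally $p$-converging, $(T^{**}\kappa_X x_n)_n = (\kappa_Y T x_n)_n$ is norm convergent in $Y^{**}$; as $\kappa_Y$ is an isometry, $(Tx_n)_n$ is norm convergent in $Y$. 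For the case $p=\infty$ one replaces ``weakly $p$-summable'' by ``weakly null'' throughout, and the same restriction-of-functionals argument applies. Hence $T$ is unconditionally $p$-converging.

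For part (2) the strategy is to exhibit a single operator witnessing the failure for all $p$ simultaneously, or at least to treat the ranges $1\le p<\infty$ and $p=\infty$ in parallel. The natural candidate is the formal identity-type inclusion $j\colon c_0 \to c_0$ composed appropriately, or more precisely an operator whose bidual ``sees'' a copy of a large space. A clean choice: take the canonical inclusion $X = c_0 \hookrightarrow \ell_\infty = X^{**}$ is not itself of the form $T^{**}$, so instead consider $T\colon Y \to c_0$ for a suitable $Y$, or use the classical example built from $\ell_1$. Concretely, I would look at an operator $T$ defined on (a subspace of) $C(K)$ or on $\ell_1$ whose relevant behavior degenerates at the bidual level: since $c_0$ contains no complemented copy of $\ell_1$ but $\ell_\infty = c_0^{**}$ does, an operator that is unconditionally converging (indeed unconditionally $p$-converging, as $c_0$-valued operators are automatically so when the domain is nice) but whose second adjoint, acting into $\ell_\infty$, restricts to an isomorphism on a copy of $\ell_{p^*}$ (or $c_0$ when $p=\infty$), will do. One then verifies directly from Theorem \ref{3.1} that $T^{**}S$ fails to be compact for the embedding $S\colon \ell_{p^*}\to X^{**}$ (resp. $S\colon c_0 \to X^{**}$) onto that copy.

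The main obstacle is the construction in part (2): one must produce an operator $T$ that is genuinely unconditionally $p$-converging (for every $p$, if a uniform example is sought) while $T^{**}$ detects a copy of $\ell_{p^*}$. The delicate point is that unconditional $p$-convergence becomes \emph{stronger} as $p$ increases (a weakly null sequence is weakly $p$-summable only under extra decay), so requiring $T$ to be unconditionally $p$-converging for large $p$ is easy, but for $p=1$ it is the strongest demand; conversely $T^{**}$ must fail to be unconditionally $p$-converging even for $p=1$, which is the \emph{weakest} failure to arrange. Reconciling these forces the example to live in a space like $c_0$ or $C[0,1]$ where every operator into it is unconditionally converging for trivial reasons (no copy of $\ell_1$ in the range under mild hypotheses on the domain) yet the bidual $\ell_\infty$ is rich enough to host an isomorphic copy of every $\ell_r$. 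I expect the verification that the chosen $T$ is unconditionally $p$-converging for all $p$ to reduce, via Theorem \ref{3.1}, to checking that $\mathcal{L}(\ell_{p^*}, X)$ or $\mathcal{L}(c_0, X)$ consists only of compact operators when composed with $T$ — which in turn follows from a Pe\l{}czy\'nski-type property of the target $c_0$ — while the failure for $T^{**}$ is a direct isomorphic-embedding check.
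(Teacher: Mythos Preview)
Your argument for part (1) is correct and essentially the same as the paper's: both exploit the identity $T^{**}\kappa_X=\kappa_Y T$ together with the fact that $\kappa_Y$ is an isometry. The paper routes this through Theorem~\ref{3.1} (compactness of $TS$ for $S\in\mathcal L(l_{p^*},X)$), while you route it through Theorem~\ref{3.2}; the difference is cosmetic. Your justification that $(\kappa_X x_n)_n$ is weakly $p$-summable in $X^{**}$ is right, though the phrase ``restrict to'' is slightly loose: what you are using is that $\kappa_X^{*}:X^{***}\to X^{*}$ is a contraction, so $\langle x^{***},\kappa_X x_n\rangle=\langle \kappa_X^{*}x^{***},x_n\rangle$ with $\kappa_X^{*}x^{***}\in B_{X^{*}}$.

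Part (2), however, has a genuine gap: you never produce an example, and your heuristic analysis points you away from one. First, your monotonicity discussion is internally inconsistent. You correctly state that unconditionally $p$-converging becomes \emph{stronger} as $p$ increases, but then write that ``requiring $T$ to be unconditionally $p$-converging for large $p$ is easy'' and that ``for $p=1$ it is the strongest demand''; this is backwards. Completely continuous ($p=\infty$) is the hardest condition to impose on $T$, and failure of unconditional convergence ($p=1$) is the hardest failure to arrange for $T^{**}$, since it forces $T^{**}$ to fix a copy of $c_0$. Second, the candidate spaces you float do not work for the full range: the identity on $c_0$ is not even unconditionally converging, and if $T$ has domain $\ell_1$ then $T^{**}$ has domain $\ell_1^{**}$, an abstract $L_1$-space which is weakly sequentially complete and therefore contains no copy of $c_0$---so $T^{**}$ is automatically unconditionally converging and the $p=1$ case cannot be witnessed this way.

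What is actually needed (for a single example covering all $p$) is a space $X$ with the Schur property whose bidual $X^{**}$ contains isomorphic copies of $c_0$ and of every $l_{p^*}$. The paper invokes exactly such a space: the Bourgain--Delbaen space $X_{BD}$, which has the Schur property while $X_{BD}^{**}$ is isomorphically universal for separable Banach spaces. Then $T=I_{X_{BD}}$ is completely continuous (hence unconditionally $p$-converging for every $p$), whereas $I_{X_{BD}}^{**}=I_{X_{BD}^{**}}$ fails to be unconditionally $p$-converging for every $p$ because $X_{BD}^{**}$ contains a copy of $l_{p^*}$ (of $c_0$ when $p=1$), so the relevant composition in Theorem~\ref{3.1} is not compact. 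This is a genuinely nontrivial input that your sketch does not anticipate.
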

\begin{proof}
(1). By the ideal property of unconditionally $p$-converging operators, $J_{Y}T$ is unconditionally $p$-converging, where $J_{Y}:Y\rightarrow Y^{**}$ is the canonical mapping. Let $S\in \mathcal{L}(l_{p^{*}},X)(1<p<\infty)$($\mathcal{L}(c_{0},X)$ for $p=1$). By Theorem \ref{3.1}, $J_{Y}TS$ is compact and hence $TS$ is compact. Again by Theorem \ref{3.1}, $T$ is unconditionally $p$-converging.

(2). J. Bourgain and F. Delbaen (see \cite{BD}) constructed a Banach space $X_{BD}$ such that $X_{BD}$ has the Schur property and $X^{**}_{BD}$ is isomorphically universal for separable Banach spaces. Since $X_{BD}$ has the Schur property, every operator from $l_{p}(1<p<\infty)$ and from $c_{0}$ into $X_{BD}$ is compact. By Theorem \ref{3.1}, every operator with domain $X_{BD}$ is unconditionally $p$-converging for each $1\leq p<\infty$. In particular, the identity map $I_{X_{BD}}$ on $X_{BD}$ is unconditionally $p$-converging. But since
$X^{**}_{BD}$ is isomorphically universal for separable Banach spaces, there exists a closed subspace $X_{p^{*}}$($X_{0}$ for $p=1$) of $X^{**}_{BD}$ such that  $X_{p^{*}}$ is isomorphic to $l_{p^{*}}$ for $1<p<\infty$ ($X_{0}$ is isomorphic to $c_{0}$ for $p=1$). This implies that $I_{X_{BD}}^{**}=I_{X_{BD}^{**}}$ is not $l_{p^{*}}$-strictly singular for $1<p<\infty$ ($c_{0}$-strictly singular for $p=1$). Thus $I_{X_{BD}}^{**}=I_{X_{BD}^{**}}$ is not unconditionally $p$-converging. For $p=\infty$, the identity map $I_{X_{BD}}$ is obviously completely continuous, but $I_{X_{BD}}^{**}=I_{X_{BD}^{**}}$ is not completely continuous because $X^{**}_{BD}$ has not the Schur property.
\end{proof}

\section{Dunford-Pettis Property of order $p$}
Let us recall that a Banach space $X$ has the \textit{Dunford-Pettis property} (in short, DPP) if for every Banach space $Y$, every weakly compact operator $T:X\rightarrow Y$ is completely continuous (see \cite{G2}). An operator $T:X\rightarrow Y$ is said to be \textit{weakly compact} if $TB_{X}$ is relatively weakly compact in $Y$.
J. M. F. Castillo and F. S\'{a}nchez extended the classical Dunford-Pettis property to the general case for $1\leq p\leq \infty$ in \cite{CS2}. Let $1\leq p\leq \infty$. A Banach space $X$ is said to have the \textit{Dunford-Pettis property of $p$} (in short, $DPP_{p}$) if for every Banach space $Y$, every weakly compact operator $T:X\rightarrow Y$ is unconditionally $p$-converging. Many classical spaces failing the DPP enjoy the $DPP_{p}$. A simple observation is that if a Banach space $X$ has cotype $q<\infty$, then $X$ has the $DPP_{p}$ for any $ 1<p<q^{*}$. Thus, the classical Hardy space $H^{1}$, which fails the DPP (see \cite{D1}), has the $DPP_{p}$ for any $1<p<2$. It is known that all the Lorentz function spaces $\Lambda(W,1)$'s fail the DPP (see \cite{D1}). But there are certain positive results for $DPP_{p}$. For example, if we take $W(t)=\frac{1}{2\sqrt{t}},t\in (0,1]$, then the space $\Lambda(W,1)$ has the $DPP_{p}$ for some $1<p\leq 2$.
Another non-reflexive space failing the DPP is the interesting space $L$ built in \cite{L}. Indeed, it was shown in \cite{BCM}that even duals of $L$ fail the DPP and odd duals of $L$ fail the surjective DPP, which is genuinely weaker than the DPP. Moreover, F. Bombal, P. Cembranos and J. Mendoza proved that for any $1\leq p<\infty$,every operator from $L$ into $l_{p}$ is compact (see \cite{BCM}). This means that $L^{*}$ has the $DPP_{p}$ for any $1<p<\infty$. More examples can be found in \cite{CS2}.

Let us start with a characterization of the $DPP_{p}$ by means of weakly $p$-limited sets.

\begin{thm}\label{4.5}
Let $1<p<\infty$. A Banach space $X$ has the $DPP_{p}$ if and only if each relatively weakly compact subset of $X^{*}$ is weakly $p$-limited.
\end{thm}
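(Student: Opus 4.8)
The plan is to deduce both implications from Theorem~\ref{3.10}, which identifies the unconditionally $p$-converging operators $T\colon X\to Y$ as exactly those whose adjoint $T^{*}$ sends bounded subsets of $Y^{*}$ to weakly $p$-limited subsets of $X^{*}$, together with Gantmacher's theorem (weak compactness of $T$ is equivalent to that of $T^{*}$) and two elementary permanence facts: a subset of a weakly $p$-limited set is weakly $p$-limited, and so is any scalar multiple of one.

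For the ``if'' direction, assume every relatively weakly compact subset of $X^{*}$ is weakly $p$-limited and let $T\colon X\to Y$ be weakly compact. By Gantmacher's theorem $T^{*}\colon Y^{*}\to X^{*}$ is weakly compact, so $T^{*}(B_{Y^{*}})$ is a relatively weakly compact subset of $X^{*}$, hence weakly $p$-limited by hypothesis. For any bounded $B\subseteq Y^{*}$ choose $\rho>0$ with $B\subseteq \rho B_{Y^{*}}$; then $T^{*}(B)\subseteq \rho\,T^{*}(B_{Y^{*}})$ is weakly $p$-limited. By Theorem~\ref{3.10}, $T$ is unconditionally $p$-converging, so $X$ has the $DPP_{p}$.

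For the ``only if'' direction, which is the substantial one, I would realize an arbitrary relatively weakly compact set $K\subseteq X^{*}$ inside the adjoint image of a weakly compact operator out of $X$. Put $W=\overline{\mathrm{aco}}(K)$, which is absolutely convex and, by Krein's theorem, weakly compact in $X^{*}$; in particular $W$ is norm-closed and bounded. Let $X_{W}=\bigcup_{n}nW$ be the linear span of $W$, equipped with the Minkowski gauge $\|\cdot\|_{W}$ of $W$; by the standard fact that the gauge of a closed, bounded, absolutely convex set defines a complete norm whose closed unit ball is that set, $(X_{W},\|\cdot\|_{W})$ is a Banach space and the formal inclusion $j\colon X_{W}\to X^{*}$ is a bounded operator with $j(B_{X_{W}})=W$. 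Since $W$ is weakly compact in $X^{*}$, $j$ is a weakly compact operator, hence so is $j^{*}\colon X^{**}\to X_{W}^{*}$, and therefore $S:=j^{*}J_{X}\colon X\to X_{W}^{*}$ is weakly compact, where $J_{X}\colon X\to X^{**}$ is the canonical embedding. Naturality of the canonical embeddings gives $S^{*}(J_{X_{W}}\eta)=j\eta$ for every $\eta\in X_{W}$, so $S^{*}(B_{X_{W}^{**}})\supseteq j(B_{X_{W}})=W\supseteq K$. Since $X$ has the $DPP_{p}$, the weakly compact operator $S$ is unconditionally $p$-converging, and Theorem~\ref{3.10} yields that $S^{*}(B_{X_{W}^{**}})$ is weakly $p$-limited; hence so is its subset $K$.

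The main obstacle is precisely this construction: converting the static datum of a relatively weakly compact subset of the dual into a weakly compact operator \emph{out of} $X$ whose adjoint captures that set. The gauge-space construction above does it cleanly; an alternative is to feed $W$ into the Davis--Figiel--Johnson--Pe\l czy\'nski factorization to obtain a reflexive space $E$ and $j\colon E\to X^{*}$ with $W\subseteq j(B_{E})$, after which the same adjoint computation applies with $X_{W}^{*}$ replaced by the reflexive space $E^{*}$ (so that weak compactness of $S$ is automatic). The remaining ingredients---Theorem~\ref{3.10}, Gantmacher's and Krein's theorems, and the permanence properties of weakly $p$-limited sets---are routine.
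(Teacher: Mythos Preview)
Your proof is correct and follows the same overall strategy as the paper: for the ``only if'' direction, manufacture a weakly compact operator out of $X$ whose adjoint image contains the given set $K$, then apply Theorem~\ref{3.10}. The paper goes directly to the Davis--Figiel--Johnson--Pe\l czy\'nski factorization (your stated alternative): it produces a reflexive subspace $Z\subseteq X^{*}$ with $K\subseteq B_{Z}$, and reflexivity makes the inclusion $J\colon Z\to X^{*}$ automatically the adjoint of an operator $T\colon X\to Z^{*}$, so one reads off $T^{*}(B_{Z^{**}})=J(B_{Z})\supseteq K$ without any further computation. Your primary route via the gauge space $X_{W}$ is more elementary in that it avoids DFJP, at the cost of not having reflexivity; you compensate by invoking Gantmacher to get weak compactness of $S=j^{*}J_{X}$ and by carrying out the naturality calculation $S^{*}J_{X_{W}}=j$ explicitly. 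Both executions are clean; the paper's is a line shorter because reflexivity collapses the adjoint bookkeeping.
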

\begin{proof}
The sufficient part follows immediately from Theorem \ref{3.10}. On the other hand, let $K$ be a relatively weakly compact subset of $X^{*}$. By the Davis-Figiel-Johnson-Pe{\l}czy\'{n}ski factorization lemma (see \cite{DFJP}), there exists a reflexive space $Z$, which is a linear subspace of $X^{*}$, such that the inclusion map $J:Z\rightarrow X^{*}$ is bounded and the unit ball $B_{Z}$ of $Z$ contains $K$. Since $Z$ is reflexive, there is an operator $T:X\rightarrow Z^{*}$ such that $T^{*}=J$. By the assumption, $T$ is unconditionally $p$-converging. By Theorem \ref{3.10}, the set $T^{*}(B_{Z})=J(B_{Z})=B_{Z}$ is weakly $p$-limited in $X^{*}$. Thus $K$ is also weakly $p$-limited.
\end{proof}

Let us remark that for each $1<p<\infty$, there exists a weakly $p$-limited set which is not relatively weakly compact. Indeed, we take $X=L^{*}$, where the space $L$ is built in \cite{L}. As mentioned above, the identity $I_{X}$ on $X$ is unconditionally $p$-converging for each $1<p<\infty$. It follows from Theorem \ref{3.10} that the unit ball $B_{X^{*}}$ is
weakly $p$-limited, but it is not weakly compact because the space $L$ is non-reflexive.

The following result is an internal characterization of the $DPP_{p}$. It is a refinement of [7,Proposition 3.2].

\begin{thm}\label{4.2}
Let $1<p<\infty$ and $X$ be a Banach space. The following are equivalent:
\item[(1)]$X$ has the $DPP_{p}$;
\item[(2)]Every weakly compact operator $T$ from $X$ into $c_{0}$ is unconditionally $p$-converging;
\item[(3)]$\lim_{n\rightarrow \infty}<x^{*}_{n},x_{n}>=0$, for every weakly $p$-Cauchy sequence $(x_{n})_{n}$ in $X$ and every weakly null sequence $(x^{*}_{n})_{n}$ in $X^{*}$;
\item[(4)]$\lim_{n\rightarrow \infty}<x^{*}_{n},x_{n}>=0$, for every $(x_{n})_{n}\in l^{w}_{p}(X)$ and every weakly null sequence $(x^{*}_{n})_{n}$ in $X^{*}$;
\item[(5)]$\lim_{n\rightarrow \infty}<x^{*}_{n},x_{n}>=0$, for every $(x_{n})_{n}\in l^{w}_{p}(X)$ and every weakly Cauchy sequence $(x^{*}_{n})_{n}$ in $X^{*}$.
\end{thm}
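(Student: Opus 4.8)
The plan is to run the cycle $(1)\Rightarrow(2)\Rightarrow(3)\Rightarrow(4)\Rightarrow(1)$ and to attach statement $(5)$ by proving $(1)\Rightarrow(5)$, the reverse implication $(5)\Rightarrow(4)$ being trivial since a weakly null sequence is weakly Cauchy. Similarly, $(1)\Rightarrow(2)$ is a special case of the definition of $DPP_{p}$, and $(3)\Rightarrow(4)$ holds because every weakly $p$-summable sequence is weakly $p$-Cauchy. Thus the real content lies in the three implications $(2)\Rightarrow(3)$, $(4)\Rightarrow(1)$ and $(1)\Rightarrow(5)$, each of which I would prove by manufacturing a suitable operator and invoking Theorem \ref{3.1} or Theorem \ref{3.2}.

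For $(2)\Rightarrow(3)$: given a weakly $p$-Cauchy sequence $(x_{n})_{n}$ in $X$ and a weakly null sequence $(x_{n}^{*})_{n}$ in $X^{*}$, consider $S\colon X\to c_{0}$, $Sx=(\langle x_{k}^{*},x\rangle)_{k}$. This is well defined into $c_{0}$ (because $(x_{n}^{*})_{n}$ is weakly null) and bounded (because $\sup_{n}\|x_{n}^{*}\|<\infty$), and it is weakly compact, since $S^{**}x^{**}=(x^{**}(x_{k}^{*}))_{k}\in c_{0}$ for every $x^{**}\in X^{**}$, again by weak nullity. By $(2)$, $S$ is unconditionally $p$-converging, so Theorem \ref{3.2} gives that $(Sx_{n})_{n}$ converges in norm to some $y=(y_{k})_{k}\in c_{0}$; since the $n$-th coordinate of $Sx_{n}$ is $\langle x_{n}^{*},x_{n}\rangle$, the diagonal estimate $|\langle x_{n}^{*},x_{n}\rangle|\le\|Sx_{n}-y\|_{\infty}+|y_{n}|\to0$ delivers $(3)$.

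For $(4)\Rightarrow(1)$: let $T\colon X\to Y$ be weakly compact; by Theorem \ref{3.1} it suffices to show $TS$ is compact for every $S\in\mathcal{L}(l_{p^{*}},X)$. If not, then exactly as in the proof of $(3)\Rightarrow(1)$ of Theorem \ref{3.2} one obtains $(x_{n})_{n}\in l^{w}_{p}(X)$ with $\|Tx_{n}\|>\ep_{0}$, and one picks $y_{n}^{*}\in B_{Y^{*}}$ with $|\langle y_{n}^{*},Tx_{n}\rangle|>\ep_{0}$. Since $T$ is weakly compact, so is $T^{*}$ (Gantmacher), whence by the Eberlein--\v{S}mulian theorem we may pass to a subsequence along which $T^{*}y_{n}^{*}\to x^{*}$ weakly in $X^{*}$; then $(T^{*}y_{n}^{*}-x^{*})_{n}$ is weakly null, and $(4)$ applied to $(x_{n})_{n}$ and $(T^{*}y_{n}^{*}-x^{*})_{n}$ yields $\langle y_{n}^{*},Tx_{n}\rangle-\langle x^{*},x_{n}\rangle\to0$. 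As $p<\infty$, the sequence $(x_{n})_{n}\in l^{w}_{p}(X)$ is weakly null, so $\langle x^{*},x_{n}\rangle\to0$ and hence $\langle y_{n}^{*},Tx_{n}\rangle\to0$, contradicting $|\langle y_{n}^{*},Tx_{n}\rangle|>\ep_{0}$. For $(1)\Rightarrow(5)$: given $(x_{n})_{n}\in l^{w}_{p}(X)$ and a weakly Cauchy sequence $(x_{n}^{*})_{n}$ in $X^{*}$, the point is that $u\colon X\to c$ (the space of convergent scalar sequences), $ux=(\langle x_{k}^{*},x\rangle)_{k}$, is automatically weakly compact: each $ux$ is convergent since $(x_{n}^{*})_{n}$ is weak$^{*}$ Cauchy, and $u^{**}x^{**}=(x^{**}(x_{k}^{*}))_{k}\in c$ for every $x^{**}\in X^{**}$ since $(x_{n}^{*})_{n}$ is weakly Cauchy, so $u^{**}(X^{**})\subseteq c$. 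As $X$ has $DPP_{p}$, $u$ is unconditionally $p$-converging, so $(ux_{n})_{n}$ is unconditionally $p$-summable in $c$ and in particular $\|ux_{n}\|_{\infty}\to0$; since $|\langle x_{n}^{*},x_{n}\rangle|\le\sup_{k}|\langle x_{k}^{*},x_{n}\rangle|=\|ux_{n}\|_{\infty}$, this is $(5)$.

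The main obstacle is choosing the right route. The tempting chain $(2)\Rightarrow(3)\Rightarrow(4)\Rightarrow(5)\Rightarrow(1)$ breaks down at $(4)\Rightarrow(5)$: one would have to upgrade the functional sequence from weakly null to weakly Cauchy, but a weakly Cauchy sequence in $X^{*}$ need not be weakly convergent, and its weak$^{*}$ limit $x^{*}$ only makes $x_{n}^{*}-x^{*}$ weak$^{*}$ null, not weakly null, when $X$ fails weak sequential completeness, so $(4)$ is not directly applicable. Routing through $(1)$ sidesteps this by using the full strength of $DPP_{p}$ (it concerns operators into \emph{every} Banach space, so the target $c$ is admissible) together with the observation that the operator built from a weakly Cauchy sequence has its bitranspose landing back inside $c$. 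The only other delicate points are the two diagonal estimates above, which are precisely what converts ``the image sequence is small'' into ``the diagonal pairings are small''.
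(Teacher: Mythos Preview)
Your cycle $(1)\Rightarrow(2)\Rightarrow(3)\Rightarrow(4)\Rightarrow(1)$ is fine, and your $(4)\Rightarrow(1)$ is essentially the paper's $(5)\Rightarrow(1)$ argument. The trouble is in your $(1)\Rightarrow(5)$: the operator $u\colon X\to c$, $ux=(\langle x_k^*,x\rangle)_k$, need \emph{not} be weakly compact when $(x_n^*)_n$ is merely weakly Cauchy. Your formula ``$u^{**}x^{**}=(x^{**}(x_k^*))_k$'' tacitly identifies $c^{**}$ with a space of sequences via the coordinate functionals $\delta_k\in c^*$, but $c^*$ also contains the limit functional $L$, and $u^*L$ equals the weak$^*$ limit $x_\infty^*\in X^*$ of $(x_n^*)_n$. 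For $u^{**}x^{**}\in J_c(c)$ one needs $\langle u^{**}x^{**},L\rangle=\lim_k\langle u^{**}x^{**},\delta_k\rangle$, i.e.\ $x^{**}(x_\infty^*)=\lim_k x^{**}(x_k^*)$, and this fails precisely when $(x_n^*)_n$ is not weakly convergent. Equivalently, $u^*\delta_n=x_n^*$, so $\{x_n^*:n\in\mathbb{N}\}\subset u^*(B_{c^*})$; if $(x_n^*)_n$ has no weakly convergent subsequence then $u^*$ (hence $u$) is not weakly compact. A concrete instance is $X=\ell_1$, $x_n^*=\sum_{k=1}^n e_k\in\ell_\infty$: this sequence is weakly Cauchy (use the Yosida--Hewitt decomposition of $\ell_\infty^*$) but not weakly convergent (a Banach limit separates it from its weak$^*$ limit $(1,1,\dots)$).

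Contrary to your diagnosis, the direct implication $(4)\Rightarrow(5)$ does go through, and this is what the paper does. Assuming $|\langle x_n^*,x_n\rangle|>\ep_0$ along a subsequence, use weak nullity of $(x_n)_n$ to pick $k_n>n$ with $|\langle x_n^*,x_{k_n}\rangle|<\ep_0/2$; since $(x_n^*)_n$ is weakly Cauchy, the sequence $(x_{k_n}^*-x_n^*)_n$ is weakly \emph{null}, so $(4)$ applied to $(x_{k_n})_n\in l_p^w(X)$ and $(x_{k_n}^*-x_n^*)_n$ gives $\langle x_{k_n}^*-x_n^*,x_{k_n}\rangle\to0$, and the triangle inequality then forces $|\langle x_{k_n}^*,x_{k_n}\rangle|<\ep_0$ eventually, a contradiction. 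The point you missed is that one never needs a weak limit of $(x_n^*)_n$: only that suitably indexed \emph{differences} are weakly null, which weak Cauchyness supplies for free.
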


\begin{proof}
$(1)\Rightarrow (2)$ is trivial. $(2)\Rightarrow (3)$. Given a weakly $p$-Cauchy sequence $(x_{n})_{n}$ in $X$ and a weakly null sequence $(x^{*}_{n})_{n}$ in $X^{*}$. Define an operator $T:X\rightarrow c_{0}$ by $Tx=(<x^{*}_{n},x>)_{n}$. Since $(x^{*}_{n})_{n}$ converges to $0$ weakly,  $T^{*}$ is weakly compact and so is $T$.
By (2), $T$ is unconditionally $p$-converging. By Theorem \ref{3.2}, $(Tx_{n})_{n}$ converges to some $\xi=(\xi_{k})_{k}\in c_{0}$ in norm.
Let $\epsilon>0$. There exists a positive integer $N_{1}$ such that $\|Tx_{n}-\xi\|<\frac{\epsilon}{2}$ for all $n>N_{1}$. Choose another positive integer $N_{2}$ such that $|\xi_{k}|<\frac{\epsilon}{2}$ for all $k>N_{2}$.
By the definition of $T$, we have $|<x^{*}_{n},x_{n}>|<\epsilon$ for all $n>\max(N_{1},N_{2})$. Thus
$\lim_{n\rightarrow \infty}<x^{*}_{n},x_{n}>=0$.

$(3)\Rightarrow (4)$ is trivial.

$(4)\Rightarrow (5)$. If $(x_{n})_{n}$ is weakly $p$-summable in $X$ and $(x^{*}_{n})_{n}$ is weakly Cauchy in $X^{*}$, yet $(<x^{*}_{n},x_{n}>)_{n}$ does not converge to $0$. By passing to subsequences, we may assume that $|<x^{*}_{n},x_{n}>|>\epsilon_{0}$ for some $\epsilon_{0}>0$ and all $n\in \mathbb{N}$. Since $(x_{n})_{n}$ is weakly $p$-summable and in particular weakly null, there exists a subsequence $(x_{k_{n}})_{n}$ of $(x_{n})_{n}$ such that $|<x^{*}_{n},x_{k_{n}}>|<\frac{\epsilon_{0}}{2}$ for all $n\in \mathbb{N}$. Since $(x^{*}_{n})_{n}$ is weakly Cauchy, we see that $(x^{*}_{k_{n}}-x^{*}_{n})_{n}$ is weakly null. By (3), $\lim_{n\rightarrow \infty}<x^{*}_{k_{n}}-x^{*}_{n},x_{k_{n}}>=0$. This implies that $|<x^{*}_{k_{n}}-x^{*}_{n},x_{k_{n}}>|<\frac{\epsilon_{0}}{3}$ for $n$ large enough. But for such $n$'s, we have $$\epsilon_{0}<|<x^{*}_{k_{n}},x_{k_{n}}>|\leq |<x^{*}_{k_{n}}-x^{*}_{n},x_{k_{n}}>|+|<x^{*}_{n},x_{k_{n}}>|<
\frac{5\epsilon_{0}}{6}.$$

$(5)\Rightarrow (1)$. Let $T:X\rightarrow Y$ be a weakly compact operator. Let us suppose that $T$ is not unconditionally $p$-converging. Appealing again to Theorem \ref{3.2}, we obtain a weakly $p$-summable sequence $(x_{n})_{n}$ in $X$ and $\epsilon_{0}>0$ such that $\|Tx_{n}\|>\epsilon_{0}(n=1,2,...)$. Pick $y^{*}_{n}\in Y^{*}$ such that $<y^{*}_{n},Tx_{n}>=\|Tx_{n}\|$ and $\|y^{*}_{n}\|=1$ for all $n\in \mathbb{N}$. Since $T$ is weakly compact, so is $T^{*}$. Hence there is a subsequence $(y^{*}_{k_{n}})_{n}$ of $(y^{*}_{n})_{n}$ such that the sequence $(T^{*}y^{*}_{k_{n}})_{n}$ converges weakly and hence is weakly Cauchy. The assumption ensures that the sequence $(<T^{*}y^{*}_{k_{n}},x_{k_{n}}>)_{n}=(\|Tx_{k_{n}}\|)_{n}$ converges to $0$, which is a contradiction.

\end{proof}

\begin{cor}\label{4.3}
Let $1<p<\infty$. If $X^{**}$ has the $DPP_{p}$, then so is $X$.
\end{cor}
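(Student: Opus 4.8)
The plan is to deduce the corollary directly from the internal characterization in Theorem \ref{4.2}, specifically using condition (4): a Banach space $Z$ has the $DPP_p$ if and only if $\lim_n \langle z^*_n, z_n\rangle = 0$ whenever $(z_n)_n \in l^w_p(Z)$ and $(z^*_n)_n$ is weakly null in $Z^*$. So suppose $X^{**}$ has the $DPP_p$; I want to verify condition (4) for $X$. Take $(x_n)_n \in l^w_p(X)$ and $(x^*_n)_n$ weakly null in $X^*$.

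First I would push everything up one level via the canonical embedding $J_X : X \to X^{**}$. The sequence $(J_X x_n)_n$ is weakly $p$-summable in $X^{**}$: indeed, for $x^{***} \in B_{X^{***}}$ we have $\sum_n |\langle x^{***}, J_X x_n\rangle|^p = \sum_n |\langle J_X x_n, x^{***}\rangle|^p = \sum_n |\langle x^{***}|_X, x_n\rangle|^p \le (\|(x_n)_n\|^w_p)^p$ since $\|x^{***}|_X\|_{X^*} \le 1$; hence $(J_X x_n)_n \in l^w_p(X^{**})$. Next, the functionals $x^*_n \in X^* \subseteq X^{***}$ form a weakly null sequence when viewed in $X^{***}$: this is exactly the statement that the inclusion $X^* \hookrightarrow X^{***} = (X^{**})^*$ is weak-to-weak continuous, which holds because it is the adjoint $J_X^*$ (up to the natural identification) of the bounded operator $J_X$, and adjoints are always weak$^*$-to-weak$^*$ continuous — but here we need weak-to-weak, which follows since $J_X^*$ restricted appropriately is weak-to-weak continuous as $X^{**}$ norms $X^*$; more simply, for $x^{**} \in X^{**}$, $\langle x^{**}, x^*_n\rangle = \langle x^*_n, x^{**}\rangle \to 0$ if $x^{**}$ comes from $X$ is not enough, so I should argue directly: $(x^*_n)_n$ weakly null in $X^*$ means $\langle x^{**}, x^*_n\rangle \to 0$ for every $x^{**} \in X^{**}$, and that is precisely weak nullity of $(x^*_n)_n$ as a sequence in $(X^{**})^*$. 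Thus $(x^*_n)_n$ is weakly null in $X^{***}$.

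Then applying condition (4) of Theorem \ref{4.2} to the space $X^{**}$ (which has $DPP_p$), with the weakly $p$-summable sequence $(J_X x_n)_n$ and the weakly null sequence $(x^*_n)_n \subseteq X^{***}$, gives $\lim_n \langle x^*_n, J_X x_n\rangle = 0$. But $\langle x^*_n, J_X x_n\rangle = \langle J_X x_n, x^*_n\rangle = \langle x^*_n, x_n\rangle$, so $\lim_n \langle x^*_n, x_n\rangle = 0$. This verifies condition (4) for $X$, and therefore $X$ has the $DPP_p$.

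There is essentially no serious obstacle here; the only point requiring a moment's care is the bookkeeping around the canonical identifications, namely checking that "weakly null in $X^*$" coincides with "weakly null when $X^*$ is regarded as a subspace of $X^{***}$" and that the weak $p$-summability constant does not increase under $J_X$. Both are routine once one writes out the dualities, since $J_X$ is an isometric embedding and $X^{**} = (X^*)^*$ supplies all the relevant functionals. I would also remark, as the authors would likely want, that an alternative proof runs through the operator-theoretic definition directly: if $T : X \to Y$ is weakly compact then $T^{**} : X^{**} \to Y^{**}$ is weakly compact, hence unconditionally $p$-converging by hypothesis on $X^{**}$, and then Theorem \ref{3.5}(1) (or the factorization $T^{**} J_X = J_Y T$ together with Theorem \ref{3.1}) forces $T$ to be unconditionally $p$-converging; but the characterization route via Theorem \ref{4.2} is the cleanest.
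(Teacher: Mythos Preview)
Your approach via condition (4) of Theorem \ref{4.2} is exactly the one the paper intends (the corollary is left unproved precisely because it is meant to follow at once from that theorem), and your argument is correct in outline. However, there is a genuine slip in your justification of the key transfer step, and your exposition shows you sensed something was off.

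You conclude with: ``$(x^*_n)_n$ weakly null in $X^*$ means $\langle x^{**}, x^*_n\rangle \to 0$ for every $x^{**} \in X^{**}$, and that is precisely weak nullity of $(x^*_n)_n$ as a sequence in $(X^{**})^*$.'' This is false: what you have written is \emph{weak}$^*$ nullity of $(J_{X^*}x^*_n)_n$ in $X^{***}=(X^{**})^*$, since you are only testing against predual elements $x^{**}\in X^{**}$. Weak nullity in $X^{***}$ requires $\Psi(J_{X^*}x^*_n)\to 0$ for every $\Psi\in X^{****}$, which is a strictly stronger condition in general. Also, the embedding $X^*\hookrightarrow X^{***}$ is $J_{X^*}$, not (a restriction of) $J_X^*$; the latter goes the wrong way, $J_X^*:X^{***}\to X^*$.

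The fix is immediate and you almost had it at the start of that paragraph: $J_{X^*}:X^*\to X^{***}$ is a bounded linear operator, and every bounded linear operator is weak-to-weak continuous (for $\Psi\in X^{****}$ one has $\Psi\circ J_{X^*}\in (X^*)^*=X^{**}$, whence $\Psi(J_{X^*}x^*_n)=(\Psi\circ J_{X^*})(x^*_n)\to 0$). With this correction your proof goes through, and the alternative route you sketch via Gantmacher and Theorem \ref{3.5}(1) is also valid.
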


The converse of Corollary \ref{4.3} is not true. In fact, the Banach space $X=(\sum_{n}l^{n}_{2})_{c_{0}}$ enjoys the DPP, but $X^{**}=(\sum_{n}l^{n}_{2})_{l_{\infty}}$ contains a complemented copy of $l_{2}$. Since $l_{2}$ fails the $DPP_{p}$ for any $2\leq p<\infty$, $X^{**}$ also fails the $DPP_{p}$ for any $2\leq p<\infty$. In the case of the classical DPP, there is a result better than Corollary \ref{4.3}: If $X^{*}$ has the DPP, then $X$ has the DPP too (see \cite{D1}). The analogous result is not true for the $DPP_{p}$: for each $1<p<\infty$, every operator from $l_{p}$ into Tsirelson's space $T$ is compact, hence $T$ has the $DPP_{p}$ for any $1<p<\infty$. But, for each $1<p<\infty$, there is a non-compact operator from $l_{p}$ into $T^{*}$. Thus, for each $1<p<\infty$, $T^{*}$ fails the $DPP_{p}$.

\begin{cor}\label{4.4}
Suppose that a Banach space $X$ contains no copy of $l_{1}$ and let $1<p<\infty$. The following statements are equivalent:
\item[(1)]$X^{*}$ has the $DPP_{p}$;
\item[(2)]For all Banach spaces $Y$, every weakly compact operator $T:Y\rightarrow X$ has the unconditionally $p$-converging adjoint;
\item[(3)]$\lim_{n\rightarrow \infty}<x^{*}_{n},x_{n}>=0$, for every $(x^{*}_{n})_{n}\in l^{w}_{p}(X^{*})$ and every weakly Cauchy sequence $(x_{n})_{n}$ in $X$;
\item[(4)]$\lim_{n\rightarrow \infty}<x^{*}_{n},x_{n}>=0$, for every weakly $p$-Cauchy sequence $(x^{*}_{n})_{n}$ in $X^{*}$ and every weakly null sequence $(x_{n})_{n}$ in $X$;
\item[(5)]$\lim_{n\rightarrow \infty}<x^{*}_{n},x_{n}>=0$, for every $(x^{*}_{n})_{n}\in l^{w}_{p}(X^{*})$ and every weakly null sequence $(x_{n})_{n}$ in $X$.
\end{cor}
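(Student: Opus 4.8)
The plan is to prove the chain of implications $(1)\Rightarrow(2)\Rightarrow(5)\Rightarrow(4)\Rightarrow(3)\Rightarrow(1)$. Most links are formal; the hypothesis that $X$ contains no copy of $l_{1}$ will be used exactly once, in the step $(3)\Rightarrow(1)$, via Rosenthal's $l_{1}$ theorem, and that is where I expect the real work to lie.

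For $(1)\Rightarrow(2)$ I would argue: if $T\in\mathcal{L}(Y,X)$ is weakly compact, then $T^{*}\colon X^{*}\rightarrow Y^{*}$ is weakly compact by Gantmacher's theorem, hence $T^{*}$ is unconditionally $p$-converging because $X^{*}$ has the $DPP_{p}$. For $(2)\Rightarrow(5)$, given a weakly null sequence $(x_{n})_{n}$ in $X$ and $(x_{n}^{*})_{n}\in l^{w}_{p}(X^{*})$, I would form $T\colon l_{1}\rightarrow X$, $T(a_{n})_{n}=\sum_{n}a_{n}x_{n}$; since $\{x_{n}:n\}$ is relatively weakly compact, $T(B_{l_{1}})$ lies in its closed absolutely convex hull, again relatively weakly compact by the Krein--\v{S}mulian theorem, so $T$ is weakly compact. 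By $(2)$, $T^{*}\colon X^{*}\rightarrow l_{\infty}$, $T^{*}x^{*}=(<x^{*},x_{n}>)_{n}$, is unconditionally $p$-converging, so $(T^{*}x_{n}^{*})_{n}\in l^{u}_{p}(l_{\infty})$; since unconditionally $p$-summable sequences are norm null (immediate from the definition), $|<x_{n}^{*},x_{n}>|\leq\|T^{*}x_{n}^{*}\|_{\infty}\rightarrow 0$.

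The implications $(5)\Rightarrow(4)$ and $(4)\Rightarrow(3)$ I would handle by one sliding-hump argument, which I describe for $(5)\Rightarrow(4)$. If $(4)$ fails there are a weakly $p$-Cauchy sequence $(x_{n}^{*})_{n}$ in $X^{*}$, a weakly null sequence $(x_{n})_{n}$ in $X$, and $\epsilon_{0}>0$ with $|<x_{n}^{*},x_{n}>|\geq\epsilon_{0}$ for all $n$ (after a subsequence). Using that $(x_{n})_{n}$ is weakly null, I would pick recursively strictly increasing integers $b_{1}<a_{1}<b_{2}<a_{2}<\cdots$ with $|<x_{b_{k}}^{*},x_{a_{k}}>|<\epsilon_{0}/2$; then $|<x_{a_{k}}^{*}-x_{b_{k}}^{*},x_{a_{k}}>|\geq\epsilon_{0}/2$, while $(x_{a_{k}}^{*}-x_{b_{k}}^{*})_{k}\in l^{w}_{p}(X^{*})$ (weak $p$-Cauchyness, applied to the strictly increasing $(a_{k})_{k}$, $(b_{k})_{k}$) and $(x_{a_{k}})_{k}$ is weakly null, contradicting $(5)$. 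The implication $(4)\Rightarrow(3)$ is symmetric: starting from $(x_{n}^{*})_{n}\in l^{w}_{p}(X^{*})$ and a weakly Cauchy $(x_{n})_{n}$, I would use that $(x_{n}^{*})_{n}$ is weakly null in $X^{*}$ to arrange $|<x_{a_{k}}^{*},x_{b_{k}}>|<\epsilon_{0}/2$, so $|<x_{a_{k}}^{*},x_{a_{k}}-x_{b_{k}}>|\geq\epsilon_{0}/2$, with $(x_{a_{k}}^{*})_{k}$ weakly $p$-Cauchy and $(x_{a_{k}}-x_{b_{k}})_{k}$ weakly null, contradicting $(4)$.

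The crux is $(3)\Rightarrow(1)$. Applying Theorem \ref{4.2} to the Banach space $X^{*}$, it suffices to verify its condition $(5)$: $\lim_{n}<x_{n}^{**},x_{n}^{*}>=0$ for every $(x_{n}^{*})_{n}\in l^{w}_{p}(X^{*})$ and every weakly Cauchy $(x_{n}^{**})_{n}$ in $X^{**}$. If this failed, after a subsequence $|<x_{n}^{**},x_{n}^{*}>|\geq\epsilon_{0}>0$ and $(x_{n}^{**})_{n}$ is bounded, say $x_{n}^{**}\in MB_{X^{**}}$; Goldstine's theorem would give, for each $n$, a vector $x_{n}\in MB_{X}$ with $|<x_{n}^{*},x_{n}>-<x_{n}^{**},x_{n}^{*}>|<\epsilon_{0}/2$, hence $|<x_{n}^{*},x_{n}>|>\epsilon_{0}/2$. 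Now $(x_{n})_{n}$ is a bounded sequence in $X$, which contains no copy of $l_{1}$, so Rosenthal's $l_{1}$ theorem yields a weakly Cauchy subsequence $(x_{n_{k}})_{k}$; then $(x_{n_{k}}^{*})_{k}\in l^{w}_{p}(X^{*})$ and $(x_{n_{k}})_{k}$ is weakly Cauchy in $X$ while $|<x_{n_{k}}^{*},x_{n_{k}}>|>\epsilon_{0}/2$ for all $k$, contradicting $(3)$. I expect this to be the only genuine obstacle: Theorem \ref{4.2} for $X^{*}$ naturally produces statements about sequences in the \emph{second} dual $X^{**}$, whereas $(3)$--$(5)$ live at the level of $X$, and the no-$l_{1}$ hypothesis is precisely what lets Goldstine's theorem plus Rosenthal's theorem transfer between the two levels. (The reverse passages $(1)\Rightarrow(3),(4),(5)$ also follow from the $X^{**}$-versions of these conditions by composing with the weak-to-weak continuous embedding $J_{X}\colon X\rightarrow X^{**}$, and in any case are subsumed by the cycle.)
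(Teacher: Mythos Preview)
Your proof is correct and uses the same toolkit as the paper --- Gantmacher for $(1)\Rightarrow(2)$, an operator $l_{1}\to X$ built from a weakly null sequence, sliding-hump arguments, and the Goldstine/Rosenthal passage from $X^{**}$ back to $X$ --- only arranged into a different cycle. The paper proves $(2)\Rightarrow(3)$ and $(5)\Rightarrow(1)$ explicitly (so its cycle runs $(1)\Rightarrow(2)\Rightarrow(3)\Rightarrow(5)\Rightarrow(1)$, with $(4)$ handled via Theorem~\ref{4.2}), whereas you run $(1)\Rightarrow(2)\Rightarrow(5)\Rightarrow(4)\Rightarrow(3)\Rightarrow(1)$; in particular the paper does the sliding hump \emph{inside} $(2)\Rightarrow(3)$ and again after Rosenthal in $(5)\Rightarrow(1)$, while you isolate the two sliding humps as $(5)\Rightarrow(4)$ and $(4)\Rightarrow(3)$ and then close with a cleaner $(3)\Rightarrow(1)$ that needs no extra hump because $(3)$ already absorbs weakly Cauchy sequences. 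The net content is the same.
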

\begin{proof}
We only prove $(2)\Rightarrow (3)$ and $(5)\Rightarrow (1)$.

$(2)\Rightarrow (3)$. Assuming the contrary, we can find $(x^{*}_{n})_{n}\in l^{w}_{p}(X^{*})$ and a weakly Cauchy sequence $(x_{n})_{n}$ in $X$ such that $|<x^{*}_{n},x_{n}>|>\epsilon_{0}$ for some $\epsilon_{0}>0$ and all $n\in \mathbb{N}$. Since $(x^{*}_{n})_{n}$ is weakly null, there exists a subsequence $(x^{*}_{k_{n}})_{n}$ of $(x^{*}_{n})_{n}$ such that $|<x^{*}_{k_{n}},x_{n}>|<\frac{\epsilon_{0}}{2}$ for all $n\in \mathbb{N}$. Thus $|<x^{*}_{k_{n}},x_{n}-x_{k_{n}}>|> \frac{\epsilon_{0}}{2}$ for all $n\in \mathbb{N}$. Define an operator $S:X^{*}\rightarrow c_{0}$ by $$Sx^{*}=(<x^{*},x_{n}-x_{k_{n}}>)_{n}, \quad x^{*}\in X^{*}.$$ It is easy to check that $S^{*}e_{n}=x_{n}-x_{k_{n}}(n=1,2,...)$, where $(e_{n})_{n}$ is the unit vector basis of $l_{1}$. Thus the operator $S^{*}$ maps $l_{1}$ into $X$ and is weakly compact. By (2), the operator $S^{**}$ is unconditionally $p$-converging. Moreover, an easy verification shows that $S^{**}=S$. By Theorem \ref{3.2}, we get $\lim_{n\rightarrow \infty}\|Sx^{*}_{k_{n}}\|=0$. It follows from the definition of the operator $S$ that $\lim_{n\rightarrow \infty}|<x^{*}_{k_{n}},x_{n}-x_{k_{n}}>|=0$, which is a contradiction.

$(5)\Rightarrow (1)$. By Theorem \ref{4.2}, it is enough to verify that
for every $(x^{*}_{n})_{n}\in l^{w}_{p}(X^{*})$ and every weakly null sequence $(x^{**}_{n})_{n}$ in $X^{**}$, the sequence $(<x^{**}_{n},x^{*}_{n}>)_{n}$ converges to $0$. Now we suppose that it is false. Then, by passing to subsequences, we may assume that $|<x^{**}_{n},x^{*}_{n}>|>\epsilon_{0}$ for some $\epsilon_{0}>0$ and all $n\in \mathbb{N}$. Of course, we may also assume that $\|x^{**}_{n}\|\leq 1$ for all $n\in \mathbb{N}$. It follows from Goldstine's Theorem that for each $n\in \mathbb{N}$, there exists an $x_{n}\in B_{X}$ such that $|<x_{n}-x^{**}_{n},x^{*}_{n}>|<\frac{\epsilon_{0}}{2}$. Then
$|<x^{*}_{n},x_{n}>|>\frac{\epsilon_{0}}{2}$ for all $n\in \mathbb{N}$. By Rosenthal's Theorem, $(x_{n})_{n}$ has a weakly Cauchy subsequence, which is still denoted by $(x_{n})_{n}$. Then there exists a subsequence $(x^{*}_{k_{n}})_{n}$ of $(x^{*}_{n})_{n}$ such that $|<x^{*}_{k_{n}},x_{n}>|<\frac{\epsilon_{0}}{3}$ for all $n\in \mathbb{N}$. By (5), we get $\lim_{n\rightarrow \infty}<x^{*}_{k_{n}},x_{k_{n}}-x_{n}>=0$, which implies that $|<x^{*}_{k_{n}},x_{k_{n}}-x_{n}>|<\frac{\epsilon_{0}}{6}$ for $n$ large enough. It is easy to verify that for such $n$'s, $|<x^{*}_{k_{n}},x_{k_{n}}>|<\frac{\epsilon_{0}}{2}$. This contradiction completes the proof.

\end{proof}

\begin{defn}
Let $1<p<\infty$. We say that a Banach space $X$ has the \textit{hereditary Dunford-Pettis property of order $p$} (in short, hereditary $DPP_{p}$)if every (closed) subspace of $X$ has the $DPP_{p}$.
\end{defn}

We present a useful characterization of hereditary $DPP_{p}$. We need a J. Elton's result that can be found in \cite{D2}.

\begin{lem}\cite{D2}\label{4.7}
If $(x_{n})_{n}$ is a normalized weakly null sequence of a space $X$ such that no subsequence of it is equivalent to the unit vector basis $(e_{n})_{n}$ of $c_{0}$, then $(x_{n})_{n}$ has a subsequence $(y_{n})_{n}$ for which given any subsequence $(z_{n})_{n}$ of $(y_{n})_{n}$ and any sequence $(\alpha_{n})_{n}\overline{\in} c_{0}$ we have $\sup_{n}\|\sum_{k=1}^{n}\alpha_{k}z_{k}\|=+\infty.$
\end{lem}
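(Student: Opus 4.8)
The plan is to produce a good subsequence directly and then verify it works by contradiction, exploiting the ``$c_0$-subsequence versus blow-up'' dichotomy for weakly null sequences. After replacing $(x_n)_n$ by a basic subsequence (possible since $(x_n)_n$ is seminormalized and weakly null) I may assume $(x_n)_n$ is basic, say with basis constant $K_b$; then by Elton's near-unconditionality theorem I pass to a subsequence $(y_n)_n$ which is \emph{nearly unconditional}: for every $\theta\in(0,1]$ there is a constant $K_\theta<\infty$ such that $\|\sum_{k\in F}a_k y_k\|\le K_\theta\|\sum_k a_k y_k\|$ whenever $(a_k)_k\in c_{00}$, $\sup_k|a_k|\le 1$, and $F\subseteq\{k:|a_k|\ge\theta\}$. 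I claim $(y_n)_n$ has the required property. Suppose not: then there are a subsequence $(z_n)_n$ of $(y_n)_n$ and scalars $(\alpha_n)_n\notin c_0$ with $C:=\sup_n\|\sum_{k=1}^n\alpha_k z_k\|<\infty$, and I will derive the impossible conclusion that $(z_n)_n$ has a subsequence equivalent to the unit vector basis of $c_0$.

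Next come two elementary reductions. Since $\alpha_{n+1}z_{n+1}$ is a difference of two consecutive partial sums and $\|z_{n+1}\|=1$, one gets $|\alpha_n|\le 2C$ for all $n$; and since $(\alpha_n)_n\notin c_0$ there are $\delta>0$ and an infinite set $A\subseteq\mathbb N$ with $|\alpha_k|\ge\delta$ for all $k\in A$. Put $\theta=\delta/(2C)$. Given a finite set $F\subseteq A$ with $n=\max F$, apply near-unconditionality to the scalars $a_k=\alpha_k/(2C)$ for $k\le n$ (and $a_k=0$ for $k>n$): here $\sup_k|a_k|\le1$, $\|\sum_k a_k z_k\|\le\tfrac{1}{2}$, and $F\subseteq\{k:|a_k|\ge\theta\}$, so $\|\sum_{k\in F}\alpha_k z_k\|\le K_\theta C$. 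Writing $(w_j)_j$ for the subsequence $(z_k)_{k\in A}$ and $(\beta_j)_j$ for $(\alpha_k)_{k\in A}$, this says $\delta\le|\beta_j|\le 2C$ and $\sup\{\|\sum_{j\in G}\beta_j w_j\|:G\subseteq\mathbb N\text{ finite}\}\le K_\theta C=:C'$.

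Finally I would upgrade this to a two-sided $c_0$-estimate for $(w_j)_j$. Splitting any $x^*\in B_{X^*}$ according to the argument of $\langle x^*,\beta_j w_j\rangle$ shows $\sum_j|\langle x^*,\beta_j w_j\rangle|\le 4C'$, hence $\|\sum_j b_j\beta_j w_j\|\le 4C'$ for every finitely supported $(b_j)_j$ with $\sup_j|b_j|\le1$; choosing $b_j=\delta c_j/(\beta_j\max_i|c_i|)$ gives $\|\sum_j c_j w_j\|\le(4C'/\delta)\max_i|c_i|$ for arbitrary scalars $(c_j)_j$. Since $(w_j)_j$ is normalized and basic with basis constant at most $K_b$, the reverse estimate $\max_j|c_j|\le 2K_b\|\sum_j c_j w_j\|$ is automatic, so $(w_j)_j$ is equivalent to the unit vector basis of $c_0$; as $(w_j)_j$ is a subsequence of $(x_n)_n$, this contradicts the hypothesis and proves the claim.

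The step I expect to be the main obstacle is exactly where Elton's theorem enters: passing from ``$\sup_n\|\sum_{k\le n}\alpha_k z_k\|<\infty$ for one non-null coefficient sequence'' to ``a subsequence is $c_0$-equivalent'' requires control over \emph{all} finitely supported combinations of the $z_k$, which the single boundedness hypothesis does not provide by itself. Near-unconditionality is precisely the device that bridges this gap, and without it (or a comparable Ramsey-type input) the argument does not close. For a self-contained treatment one would instead establish the weak form of near-unconditionality used here directly, via a Ramsey-type stabilization applied to a colouring of the finite subsets of $\mathbb N$ recording the approximate norms of the corresponding combinations of the $z_k$.
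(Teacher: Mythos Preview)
The paper does not prove this lemma: it is quoted from Diestel's book (reference [11]), where it is attributed to Elton, and used as a black box in the proof of the characterization of the hereditary $DPP_p$. There is therefore no proof in the paper to compare against.

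Your argument is correct. Once $(y_n)_n$ is nearly unconditional, the contradiction hypothesis gives a subsequence $(z_n)_n$ and coefficients $(\alpha_n)_n\notin c_0$ with bounded partial sums; you correctly extract a further infinite set $A$ on which $|\alpha_k|\ge\delta$, and near-unconditionality (which passes to subsequences with the same constants, since one can simply zero out the omitted coefficients) converts the single partial-sum bound into a uniform bound on all finite sub-sums over $A$. The passage from there to $c_0$-equivalence of $(w_j)_j$ is standard and carried out cleanly. One cosmetic point: when you bound $\|\sum_k a_k z_k\|$ you use the partial sum up to $n=\max F$, so the bound is $C/(2C)=\tfrac12$ as you write; it may be worth noting explicitly that near-unconditionality is being applied to $(z_n)_n$ rather than $(y_n)_n$, and why this is legitimate.

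You are also right that the substantive content sits inside the near-unconditionality theorem. Both that theorem and the lemma here are results of Elton proved by the same Ramsey-type stabilization, so what you have written is a correct reduction of one Elton theorem to another rather than an independent proof. For the purposes of this paper---where the lemma is merely cited---that is entirely adequate, and your closing remark about how one would make the argument self-contained is accurate.
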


\begin{thm}\label{4.11}
Let $X$ be Banach space and $1<p<\infty$. The following are equivalent:
\item[(1)]$X$ has the hereditary $DPP_{p}$;
\item[(2)]Every normalized weakly $p$-summable sequence in $X$ admits a subsequence that is equivalent to the unit vector basis of $c_{0}$;
\item[(3)]Every weakly $p$-summable sequence in $X$ admits a weakly $1$-summable subsequence;
\item[(4)]Every weakly $p$-summable sequence in $X$ admits a subsequence $(y_{n})_{n}$ such that

$\sup_{N}\|\sum_{n=1}^{N}y_{n}\|<\infty$.
\end{thm}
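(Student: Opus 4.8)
\emph{Strategy.} I would prove the cycle $(2)\Rightarrow(3)\Rightarrow(4)\Rightarrow(2)$, then $(3)\Rightarrow(1)$, and finally $(1)\Rightarrow(2)$, which is the substantive implication. The equivalences $(2)\Leftrightarrow(3)\Leftrightarrow(4)$ are purely sequential. For $(3)\Rightarrow(4)$: a weakly $1$-summable sequence $(y_n)$ satisfies $\|\sum_{n=1}^{N}y_n\|\leq\sup_{x^{*}\in B_{X^{*}}}\sum_{n}|<x^{*},y_n>|<\infty$. For $(2)\Rightarrow(3)$: given $(x_n)\in l^{w}_{p}(X)$, if some subsequence is norm null, pass to an absolutely (hence weakly $1$-)summable subsequence; otherwise $(x_n)$ has a seminormalized subsequence which, after normalization and an application of (2), has a subsequence equivalent to the $c_0$-basis — and that subsequence is weakly $1$-summable, a property preserved on multiplying back by the bounded and bounded-away-from-zero norms. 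For $(4)\Rightarrow(2)$, argue by contraposition: a normalized $(x_n)\in l^{w}_{p}(X)$ with no $c_0$-subsequence has, by Lemma \ref{4.7} applied with the weights $\alpha_n\equiv1\,\overline{\in}\,c_0$, a subsequence every subsequence of which has unbounded partial sums, contradicting $(4)$.

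\emph{$(3)\Rightarrow(1)$.} If a subspace $Y\subseteq X$ failed $DPP_p$, Theorem \ref{4.2} would give $(y_n)\in l^{w}_{p}(Y)$ and a weakly null $(y_n^{*})\subseteq Y^{*}$ with $|<y_n^{*},y_n>|>\varepsilon_0$ along a subsequence; by $(3)$ pass to a further subsequence $(y_{n_k})_k$ weakly $1$-summable in $X$, hence in $Y$, and let $R:c_0\to Y$ with $Re_k=y_{n_k}$. Since $(y_{n_k}^{*})$ is weakly null, $R^{*}y_{n_k}^{*}\to0$ weakly in $\ell_1$, hence in norm by the Schur property, so $<y_{n_k}^{*},y_{n_k}>=(R^{*}y_{n_k}^{*})_k\to0$ — a contradiction. (One can also see $(2)\Rightarrow(1)$ directly: a failure of $DPP_p$ in $Y$ produces, via Theorem \ref{3.2}, a weakly $p$-summable $(y_n)$ with $\|Ty_n\|\geq\varepsilon_0$ for a weakly compact $T$; these $y_n$ are seminormalized, so by $(2)$ a subsequence spans a copy of $c_0$ on which $T$ is weakly compact, hence compact, hence completely continuous — contradicting $\|Ty_{n_k}\|\geq\varepsilon_0$.)

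\emph{$(1)\Rightarrow(2)$, main line.} Suppose $(2)$ fails: there is a normalized $(x_n)\in l^{w}_{p}(X)$ with no $c_0$-subsequence; passing to a basic subsequence, set $Y=\overline{\operatorname{span}}(x_n)$, which has $DPP_p$ by $(1)$. If $Y$ happens to be reflexive we are done at once: $I_Y$ is weakly compact but not unconditionally $p$-converging (it fixes $(x_n)$, which is not unconditionally $p$-summable since it is normalized), so $Y$ fails $DPP_p$. In general, the operator $S:\ell_{p^{*}}\to Y$, $Se_n=x_n$, is weakly compact ($\ell_{p^{*}}$ reflexive) but non-compact (Fourie--Swart), whence $S^{*}:Y^{*}\to\ell_p$ is weakly compact and non-compact; thus $S^{*}B_{Y^{*}}$ is relatively weakly compact but not relatively norm compact. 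By the Bessaga--Pe\l czy\'nski selection principle there are $\delta>0$, a subsequence $(h_k)\subseteq B_{Y^{*}}$, a fixed $\xi\in\ell_p$, and blocks $\eta_k$ of the $\ell_p$-basis (disjoint supports, $\|\eta_k\|_p\in[\delta,2\delta]$) with $S^{*}h_k\to\xi$ weakly and $\|S^{*}h_k-\xi-\eta_k\|_p\to0$. Let $\theta_k\in B_{\ell_{p^{*}}}$ be the functional norming $\eta_k$, supported on the $k$-th block, and put $v_k:=S\theta_k\in Y$; then $(v_k)\in l^{w}_{p}(Y)$ (disjointly supported balls of $\ell_{p^{*}}$ are weakly $p$-summable, so their $S$-image is too), while, since $\|\xi|_{\operatorname{supp}\theta_k}\|_p\to0$, $<h_k,v_k>=<S^{*}h_k,\theta_k>$ stays $\geq\delta/2$, so $(<h_k,v_k>)_k$ does not converge to $0$. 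Now if $Y^{*}$ contains no copy of $\ell_1$, Rosenthal's theorem gives a weakly Cauchy subsequence of $(h_k)$, which together with the corresponding $(v_k)$ violates Theorem \ref{4.2}(5) in $Y$ — a contradiction.

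\emph{The obstacle.} The remaining case is when $(h_k)$ has a subsequence equivalent to the $\ell_1$-basis (equivalently, when $Y^{*}$ contains $\ell_1$): then Theorem \ref{4.2} cannot be invoked directly with these functionals. Handling this case is the main technical point of the proof. The natural plan is to exploit that in this situation $Y$ carries a rich $c_0$-like block structure, to pass to an appropriate block basis of $(x_n)$ (note $v_k$ is itself a block of $(x_n)$), or to split off a complemented $c_0$ via Sobczyk's theorem and reduce — using that $DPP_p$ passes to complemented subspaces — to a subspace whose dual omits $\ell_1$; each such reduction must preserve ``normalized, weakly $p$-summable, no $c_0$-subsequence,'' and it is precisely here that Lemma \ref{4.7} is indispensable, since it is what forces the successive sequences to remain uniformly far from $c_0$. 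I expect pinning down this iteration to be the most delicate part of the argument.
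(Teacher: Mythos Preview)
Your cycle $(2)\Rightarrow(3)\Rightarrow(4)\Rightarrow(2)$ and your $(3)\Rightarrow(1)$ via the Schur property of $\ell_1$ are fine (the paper instead closes the loop with $(4)\Rightarrow(1)$, using Elton's lemma once more, but your route works just as well).

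The genuine gap is in $(1)\Rightarrow(2)$. You correctly identify an ``obstacle'' case --- when the functionals $(h_k)$ span an $\ell_1$ in $Y^{*}$ --- and you do not resolve it; the splitting-off-a-$c_0$ plan you sketch is neither carried out nor obviously convergent. The paper's argument avoids this difficulty entirely by using Lemma~\ref{4.7} a second time, and in a much sharper way than you do. Here is the point you are missing. Starting from a normalized weakly $p$-summable $(x_n)$ with no $c_0$-subsequence, Elton's lemma yields a basic subsequence $(y_n)$ such that for \emph{every} scalar sequence $(\alpha_n)\notin c_0$ one has $\sup_N\|\sum_{n=1}^{N}\alpha_n y_n\|=\infty$. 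Let $X_0=\overline{\operatorname{span}}(y_n)$, let $(y_n^{*})$ be the coefficient functionals, and let $P_N$ be the basis projections. For any $y^{**}\in X_0^{**}$ one has $P_N^{**}y^{**}=\sum_{n=1}^{N}\langle y^{**},y_n^{*}\rangle\,y_n$, and these are bounded in norm by $\|y^{**}\|\sup_N\|P_N\|$. Elton's conclusion then \emph{forces} $(\langle y^{**},y_n^{*}\rangle)_n\in c_0$ for every $y^{**}$; that is, $(y_n^{*})$ is weakly null in $X_0^{*}$. Since $(y_n)$ is weakly $p$-summable in $X_0$ and $\langle y_n^{*},y_n\rangle=1$, Theorem~\ref{4.2}(4) shows $X_0$ fails $DPP_p$, contradicting (1).

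In short: you used Elton's lemma only with $\alpha_n\equiv 1$ (for $(4)\Rightarrow(2)$), but its full strength --- unboundedness for \emph{all} $(\alpha_n)\notin c_0$ --- is exactly what manufactures a weakly null sequence of coefficient functionals inside the subspace, so that no case analysis on $Y^{*}$ is needed at all. Your detour through $S^{*}$, block decompositions in $\ell_p$, and Rosenthal's theorem is unnecessary and, as you noticed, incomplete.
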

\begin{proof}
$(1)\Rightarrow (2)$. Let $(x_{n})_{n}$ be a normalized weakly $p$-summable sequence in $X$ such that it admits no subsequence
that is equivalent to the unit vector basis $(e_{n})_{n}$ of $c_{0}$. It follows from Lemma \ref{4.7} that $(x_{n})_{n}$ has a subsequence $(y_{n})_{n}$ as stated in Lemma \ref{4.7}. By Bessaga-Pe{\l}czy\'{n}ski Selection Principle, we may assume that $(y_{n})_{n}$ is a basic sequence. Let $X_{0}=\overline{span}\{y_{n}:n=1,2,...\}$. Let $(y^{*}_{n})_{n}\subset X^{*}_{0}$ be the coefficient functionals of  the basic sequence $(y_{n})_{n}$. For each $N$, define a projection $P_{N}:X_{0}\rightarrow X_{0}$ by $$P_{N}(y)=\sum_{n=1}^{N}<y^{*}_{n},y>y_{n}, \quad y\in X_{0}.$$ Then the projection $P_{N}$'s are uniformly bounded in operator norm. An easy verification shows that $P^{**}_{N}y^{**}=\sum_{n=1}^{N}<y^{**},y^{*}_{n}>y_{n}$ for all $y^{**}\in X_{0}^{**}$. Lemma \ref{4.7} and the uniform boundedness of the projection $P_{N}$'s imply that $(<y^{**},y^{*}_{n}>)_{n}\in c_{0}$ for all $y^{**}\in X_{0}^{**}$, that is, $(y^{*}_{n})_{n}$ is weakly null. Since $<y^{*}_{n},y_{n}>=1$ for all $n\in\mathbb{N}$, it follows from Theorem \ref{4.2} again that $X_{0}$ fails the $DPP_{p}$.

$(2)\Rightarrow (3)$ and $(3)\Rightarrow (4)$ are obvious.

$(4)\Rightarrow (1)$. Take a subspace $X_{0}$ of $X$ that fails the $DPP_{p}$. Appealing to Theorem \ref{4.2}, we obtain a weakly compact operator $T:X_{0}\rightarrow c_{0}$ which is not unconditionally $p$-converging. Applying Theorem \ref{3.2}, we get a normalized weakly $p$-summable sequence
$(x_{n})_{n}$ in $X$ such that $\|Tx_{n}\|\geq \epsilon_{0}$ for all $n\in \mathbb{N}$. Bessaga-Pe{\l}czy\'{n}ski Selection Principle allows us to assume that the sequence
$(Tx_{n})_{n}$ is equivalent to the unit vector basis $(e_{n})_{n}$ of $c_{0}$. By the weak compactness of $T$, the sequence $(x_{n})_{n}$
admits no subsequence equivalent to the unit vector basis $(e_{n})_{n}$. By Lemma \ref{4.7}, the sequence $(x_{n})_{n}$ admits a subsequence $(y_{n})_{n}$ for which given any subsequence $(z_{n})_{n}$ of $(y_{n})_{n}$, one has $\sup_{N}\|\sum_{n=1}^{N}z_{n}\|=\infty$.

\end{proof}

A direct consequence of Theorem \ref{4.11} is the following corollary:

\begin{cor}
If a Banach space $X$ has the hereditary $DPP_{p}$, then each weakly $p$-summable sequence in $X$ admits a subsequence $(x_{n})_{n}$ such that $\lim_{n\rightarrow \infty}\|\sum_{k=1}^{n}x_{k}\|/n^{\frac{1}{p^{*}}}=0$.
\end{cor}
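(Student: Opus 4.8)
The plan is to deduce the corollary directly from the equivalence $(1)\Leftrightarrow(4)$ in Theorem \ref{4.11}, so the real content is a normalization/averaging argument rather than anything new about $DPP_p$. Suppose $X$ has the hereditary $DPP_p$ and let $(x_n)_n$ be weakly $p$-summable in $X$. By Theorem \ref{4.11}(4), after passing to a subsequence (still denoted $(x_n)_n$) we may assume $M:=\sup_N\|\sum_{n=1}^N x_n\|<\infty$. It then suffices to show that this boundedness of the partial sums forces $\|\sum_{k=1}^n x_k\|/n^{1/p^*}\to 0$; in fact the numerator is bounded, so the ratio tends to $0$ as soon as $n^{1/p^*}\to\infty$, which happens precisely when $p^*<\infty$, i.e. when $p>1$. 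Since throughout we have $1<p<\infty$, this is automatic.

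So the proof I would write is essentially one line: choose the subsequence $(x_n)_n$ from Theorem \ref{4.11}(4), and then for every $n$,
\[
\frac{\big\|\sum_{k=1}^{n}x_{k}\big\|}{n^{1/p^{*}}}\le \frac{M}{n^{1/p^{*}}}\xrightarrow[n\to\infty]{}0,
\]
because $1<p<\infty$ gives $1\le p^{*}<\infty$ and hence $n^{1/p^{*}}\to\infty$. (The borderline $p=1$, $p^*=\infty$ is excluded by hypothesis, which is exactly why the statement is phrased for $1<p<\infty$; note that for $p^*=\infty$ one would interpret $n^{1/p^*}=1$ and the conclusion would be false in general.)

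I do not anticipate a genuine obstacle here; the only thing to be careful about is the quantifier structure — the corollary only claims existence of \emph{a} subsequence with the stated decay, not that \emph{every} subsequence works, so invoking Theorem \ref{4.11}(4) to extract one good subsequence is legitimate and sufficient. One could alternatively phrase the estimate as saying that the Cesàro-type averages $\frac1n\sum_{k=1}^n x_k$ are not just null in the weak sense but decay at the rate $o(n^{1/p^*-1})$ in norm; but since the cleanest route is the direct comparison with the bound $M$, that is what I would present, keeping the corollary's proof to a couple of lines immediately following the statement.
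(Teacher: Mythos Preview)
Your proof is correct and matches the paper's approach: the paper simply states that the corollary is ``a direct consequence of Theorem \ref{4.11}'' without further argument, and your one-line deduction from condition (4) via the bound $M/n^{1/p^*}\to 0$ is exactly the intended reasoning.
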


We close this section with the surjective $DPP_{p}$, a formally weaker property than the $DPP_{p}$. By the Davis-Figiel-Johnson-Pe{\l}czy\'{n}ski's factorization theorem (see \cite{DFJP}), a Banach space $X$ has the $DPP_{p}$ if and only if for all reflexive spaces $Y$, every operator from $X$ into $Y$ is unconditionally $p$-converging. We introduce the surjective $DPP_{p}$ by imposing that every surjective operator from $X$ onto the reflexive space $Y$ is unconditionally $p$-converging. The motivation for introducing the surjective $DPP_{p}$ was to extend the surjective DPP introduced in \cite{L}.

\begin{defn}
Let $1<p<\infty$. We say that a Banach space $X$ has the \textit{surjective $DPP_{p}$} if for all reflexive spaces $Y$, every surjective operator from $X$ onto $Y$ is unconditionally $p$-converging.
\end{defn}

The following are the internal characterizations of the surjective $DPP_{p}$.

\begin{thm}\label{4.12}
The following are equivalent for a Banach space $X$ and $1<p<\infty$:
\item[(1)]$X$ has the surjective $DPP_{p}$;
\item[(2)]$\lim_{n\rightarrow \infty}<x^{*}_{n},x_{n}>=0$, for every weakly $p$-Cauchy sequence $(x_{n})_{n}$ in $X$ and every weakly null sequence $(x^{*}_{n})_{n}$ in $X^{*}$ such that $\overline{span}\{x^{*}_{n}:n=1,2,...\}$ is reflexive;
\item[(3)]$\lim_{n\rightarrow \infty}<x^{*}_{n},x_{n}>=0$, for every $(x_{n})_{n}\in l^{w}_{p}(X)$ and every weakly null sequence $(x^{*}_{n})_{n}$ in $X^{*}$ such that $\overline{span}\{x^{*}_{n}:n=1,2,...\}$ is reflexive;
\item[(4)]$\lim_{n\rightarrow \infty}<x^{*}_{n},x_{n}>=0$, for every $(x_{n})_{n}\in l^{w}_{p}(X)$ and every weakly Cauchy sequence $(x^{*}_{n})_{n}$ in $X^{*}$ such that $\overline{span}\{x^{*}_{n}:n=1,2,...\}$ is reflexive.
\end{thm}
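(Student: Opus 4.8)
The plan is to prove the cycle $(1)\Rightarrow(2)\Rightarrow(3)\Rightarrow(4)\Rightarrow(1)$, modelled on Theorem~\ref{4.2}, with the reflexivity hypothesis on $\overline{span}\{x^{*}_{n}:n\}$ entering exactly at the two places where a genuinely \emph{surjective} operator onto a reflexive space is needed.

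For $(1)\Rightarrow(2)$ I would proceed as follows. Given a weakly $p$-Cauchy sequence $(x_{n})_{n}$ in $X$ and a weakly null sequence $(x^{*}_{n})_{n}$ in $X^{*}$ with $Z:=\overline{span}\{x^{*}_{n}:n\}$ reflexive, consider the restriction operator $U:X\rightarrow Z^{*}$ given by $<Ux,z>=<z,x>$ for $z\in Z\subseteq X^{*}$; equivalently, $U=i_{Z}^{*}J_{X}$, where $i_{Z}:Z\hookrightarrow X^{*}$ is the inclusion and $J_{X}:X\rightarrow X^{**}$ is the canonical embedding. The crucial point is that $U$ maps $X$ \emph{onto} the reflexive space $Z^{*}$: a one-line computation shows that, under the canonical identification $Z^{**}=Z$, the adjoint $U^{*}$ equals the inclusion $i_{Z}:Z\hookrightarrow X^{*}$, which is an isometric embedding, so $U$ is a surjection. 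Hence, by (1), $U$ is unconditionally $p$-converging, and Theorem~\ref{3.2} gives that $(Ux_{n})_{n}$ converges in norm, say to $\xi\in Z^{*}$. Then $<x^{*}_{n},x_{n}>=<Ux_{n},x^{*}_{n}>=<Ux_{n}-\xi,x^{*}_{n}>+<\xi,x^{*}_{n}>\rightarrow0$, because $(x^{*}_{n})_{n}$ is bounded and weakly null; this is exactly~(2).

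The implication $(2)\Rightarrow(3)$ is immediate, since a weakly $p$-summable sequence is weakly $p$-Cauchy. For $(3)\Rightarrow(4)$ I would use that a weakly Cauchy sequence $(x^{*}_{n})_{n}$ whose closed span $Z$ is reflexive must converge weakly to some $x^{*}_{0}\in Z$; then $(x^{*}_{n}-x^{*}_{0})_{n}$ is weakly null with closed span contained in $Z$, hence reflexive, so (3) yields $<x^{*}_{n}-x^{*}_{0},x_{n}>\rightarrow0$, while $<x^{*}_{0},x_{n}>\rightarrow0$ because $(x_{n})_{n}\in l^{w}_{p}(X)$ is weakly null; adding gives~(4). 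For $(4)\Rightarrow(1)$, let $T:X\rightarrow Y$ be a surjection onto a reflexive $Y$ that is not unconditionally $p$-converging. By Theorem~\ref{3.2} there are $(x_{n})_{n}\in l^{w}_{p}(X)$ and $\epsilon_{0}>0$ with $\|Tx_{n}\|>\epsilon_{0}$ for all $n$; pick $y^{*}_{n}\in Y^{*}$ with $\|y^{*}_{n}\|=1$ and $<y^{*}_{n},Tx_{n}>=\|Tx_{n}\|$, and (since $Y^{*}$ is reflexive) pass to a subsequence along which $(y^{*}_{n})_{n}$ is weakly Cauchy. Then $x^{*}_{n}:=T^{*}y^{*}_{n}$ is weakly Cauchy in $X^{*}$, and because $T$ is surjective, $T^{*}$ is an isomorphic embedding, so $T^{*}(Y^{*})$ is a closed subspace of $X^{*}$ isomorphic to $Y^{*}$ and therefore reflexive; thus $\overline{span}\{x^{*}_{n}:n\}$ is reflexive. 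Now (4) forces $<x^{*}_{n},x_{n}>\rightarrow0$, contradicting $<x^{*}_{n},x_{n}>=<y^{*}_{n},Tx_{n}>=\|Tx_{n}\|>\epsilon_{0}$.

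The main obstacle, and the reason the hypothesis ``$\overline{span}\{x^{*}_{n}:n\}$ reflexive'' is present, is producing a genuinely surjective operator onto a reflexive space in $(1)\Rightarrow(2)$: the operator $U$ naturally attached to $(x^{*}_{n})_{n}$ always has dense range, but the surjective $DPP_{p}$ is silent about dense-range maps, and it is precisely the reflexivity of $Z$ that promotes $U$ to a surjection, via the identity $U^{*}=i_{Z}\circ J_{Z}^{-1}$. The same mechanism governs $(4)\Rightarrow(1)$, where surjectivity of $T$ is what makes $T^{*}(Y^{*})$ — and hence the closed span of the $T^{*}y^{*}_{n}$ — reflexive. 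All remaining steps are routine and parallel the corresponding ones in Theorem~\ref{4.2}.
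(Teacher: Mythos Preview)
Your proof is correct and follows the same cycle $(1)\Rightarrow(2)\Rightarrow(3)\Rightarrow(4)\Rightarrow(1)$ as the paper. Your operator $U:X\to Z^{*}$ in $(1)\Rightarrow(2)$ is precisely the paper's quotient $Q:X\to X/Z_{\bot}$ under the canonical identification $X/Z_{\bot}\cong Z^{*}$ (your observation that $U^{*}=i_{Z}$ is exactly what makes this identification work and guarantees surjectivity), and your $(4)\Rightarrow(1)$ matches the paper's argument verbatim.

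The one genuine difference is in $(3)\Rightarrow(4)$: the paper argues by contradiction with a subsequence-extraction trick (choosing $(x_{k_{n}})$ so that $|\langle x^{*}_{n},x_{k_{n}}\rangle|<\epsilon_{0}/2$ and then applying (3) to the weakly null sequence $(x^{*}_{n}-x^{*}_{k_{n}})$), whereas you observe directly that a weakly Cauchy sequence in the reflexive space $Z$ must converge weakly to some $x^{*}_{0}\in Z$, reducing immediately to (3). Your route is shorter and more transparent; the paper's avoids identifying the weak limit but at the cost of an extra subsequence manoeuvre.
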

\begin{proof}
$(1)\Rightarrow (2)$. Let $(x_{n})_{n}\subset X$ and $(x^{*}_{n})_{n}\subset X^{*}$ be as in (2). Let $Z=\overline{span}\{x^{*}_{n}:n=1,2,...\}$. Then $(Z_{\bot})^{\bot}=Z$, where $Z_{\bot}:=\{x\in X:<x^{*},x>=0$ for all $x^{*}\in Z\}$ and $(Z_{\bot})^{\bot}:=\{x^{*}\in X^{*}:<x^{*},x>=0$ for all $x\in Z_{\bot}\}$. Let $Q:X\rightarrow X/Z_{\bot}$ be the natural quotient. Then $Q^{*}: (X/Z_{\bot})^{*}\rightarrow Z$ is a surjective isometrical isomorphism. Let $Q^{*}f_{n}=x^{*}_{n}, f_{n}\in (X/Z_{\bot})^{*}$ for all $n\in \mathbb{N}$. By (1), the quotient $Q$ is unconditionally $p$-converging. By Theorem \ref{3.2}, the sequence $(Qx_{n})_{n}$ converges in norm to $Qx$ for some $x\in X$. Thus $$|<x^{*}_{n},x_{n}-x>|=|<f_{n},Qx_{n}-Qx>|\leq (\sup_{n}\|f_{n}\|)\|Qx_{n}-Qx\|\rightarrow 0 \quad(n\rightarrow \infty).$$
Since $(x^{*}_{n})_{n}$ is weakly null, $\lim_{n\rightarrow \infty}<x^{*}_{n},x>=0$.Therefore we have $\lim_{n\rightarrow \infty}<x^{*}_{n},x_{n}>=0$.

$(2)\Rightarrow (3)$ is obvious.

$(3)\Rightarrow (4)$. Suppose that (4) is false. Then there exist a sequences $(x_{n})_{n}\in l^{w}_{p}(X)$ and a weakly Cauchy sequence $(x^{*}_{n})_{n}$ in $X^{*}$ such that $\overline{span}\{x^{*}_{n}:n=1,2,...\}$ is reflexive so that $|<x^{*}_{n},x_{n}>|>\epsilon_{0}>0$ for all $n\in \mathbb{N}$. Since the sequence $(x_{n})_{n}$ converges to $0$ weakly, there is a subsequence $(x_{k_{n}})_{n}$ of $(x_{n})_{n}$ such that $|<x^{*}_{n},x_{k_{n}}>|<\frac{\epsilon_{0}}{2}$ for all $n\in \mathbb{N}$. Since the space $\overline{span}\{x^{*}_{n}:n=1,2,...\}$ is reflexive, the space $\overline{span}\{x^{*}_{n}-x^{*}_{k_{n}}:n=1,2,...\}$ is reflexive too. By the hypothesis,
$\lim_{n\rightarrow \infty}<x^{*}_{n}-x^{*}_{k_{n}},x_{k_{n}}>=0$. Thus, $|<x^{*}_{n}-x^{*}_{k_{n}},x_{k_{n}}>|<\frac{\epsilon_{0}}{2}$ for $n$ large enough, which implies that for such $n$'s,
$|<x^{*}_{k_{n}},x_{k_{n}}>|<\epsilon_{0}$, a contradiction.

$(4)\Rightarrow (1)$. Suppose that $X$ fails the surjective $DPP_{p}$. Then there exists a surjective operator $T$ from $X$ onto a reflexive space $Y$ such that $T$ is not unconditionally $p$-converging. By Theorem \ref{3.2}, there exists a normalized weakly $p$-summable sequence $(x_{n})_{n}$ in $X$ such that $\|Tx_{n}\|>\epsilon_{0}$ for all $n\in \mathbb{N}$. For each $n$, choose $y^{*}_{n}\in Y^{*}$ with $\|y^{*}_{n}\|=1$ such that $<y^{*}_{n},Tx_{n}>=\|Tx_{n}\|$. By the reflexivity of $Y$, we may assume that the sequence $(y^{*}_{n})_{n}$ converges to $0$ weakly by passing to subsequences if necessary.
Let $x^{*}_{n}=T^{*}y^{*}_{n}$.
Then the sequence $(x^{*}_{n})_{n}$ converges to $0$ weakly too. Since $T$ is surjective, the operator $T^{*}:Y^{*}\rightarrow X^{*}$ is an isomorphic embedding. This implies that the space $\overline{span}\{x^{*}_{n}:n=1,2,...\}$ is contained in $T^{*}(\overline{span}\{y^{*}_{n}:n=1,2,...\})$ and hence is reflexive. By (4), $\lim_{n\rightarrow \infty}<x^{*}_{n},x_{n}>=0$, a contradiction because $<x^{*}_{n},x_{n}>>\epsilon_{0}$ for all $n\in\mathbb{N}$. This concludes the proof.

\end{proof}

An immediate consequence of Theorem \ref{4.12} is the following:

\begin{cor}\label{4.13}
Let $1<p<\infty$. If $X^{**}$ has the surjective $DPP_{p}$, then so is $X$.
\end{cor}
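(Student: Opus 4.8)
The plan is to derive the corollary directly from the internal characterizations in Theorem~\ref{4.12} by pushing the relevant sequences from $X$ up into $X^{**}$ via the canonical embeddings. Assume $X^{**}$ has the surjective $DPP_{p}$. By the equivalence $(1)\Leftrightarrow(3)$ of Theorem~\ref{4.12}, it suffices to verify condition $(3)$ of that theorem for $X$: so fix $(x_{n})_{n}\in l^{w}_{p}(X)$ and a weakly null sequence $(x^{*}_{n})_{n}$ in $X^{*}$ with $Z:=\overline{\operatorname{span}}\{x^{*}_{n}:n\in\mathbb{N}\}$ reflexive, and show $\langle x^{*}_{n},x_{n}\rangle\to 0$.

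Write $J_{X}:X\to X^{**}$ and $J_{X^{*}}:X^{*}\to X^{***}$ for the canonical embeddings. I would first record three stability facts. (i) $(J_{X}x_{n})_{n}\in l^{w}_{p}(X^{**})$ with $\|(J_{X}x_{n})_{n}\|^{w}_{p}=\|(x_{n})_{n}\|^{w}_{p}$: if $S\in\mathcal{L}(l_{p^{*}},X)$ is the operator with $Se_{n}=x_{n}$ coming from the Grothendieck identification recalled in the introduction, then (since $l_{p^{*}}$ is reflexive) $S^{**}\in\mathcal{L}(l_{p^{*}},X^{**})$ satisfies $S^{**}e_{n}=J_{X}x_{n}$ and $\|S^{**}\|=\|S\|$, so $(J_{X}x_{n})_{n}$ is weakly $p$-summable; alternatively this follows from a direct Hölder estimate on $\|\sum a_{n}J_{X}x_{n}\|$. (ii) $(J_{X^{*}}x^{*}_{n})_{n}$ is weakly null in $X^{***}$, since $J_{X^{*}}$ is weak-to-weak continuous. (iii) $\overline{\operatorname{span}}\{J_{X^{*}}x^{*}_{n}:n\in\mathbb{N}\}=J_{X^{*}}(Z)$ is isometric to $Z$, hence reflexive, because $J_{X^{*}}$ is an isometric embedding.

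With these in hand, apply condition $(3)$ of Theorem~\ref{4.12} for $X^{**}$ to the weakly $p$-summable sequence $(J_{X}x_{n})_{n}$ in $X^{**}$ and the weakly null sequence $(J_{X^{*}}x^{*}_{n})_{n}$ in $X^{***}$, whose closed linear span is reflexive by (iii): this yields $\langle J_{X^{*}}x^{*}_{n},J_{X}x_{n}\rangle\to 0$. Since $\langle J_{X^{*}}x^{*}_{n},J_{X}x_{n}\rangle=\langle x^{*}_{n},x_{n}\rangle$, we obtain $\langle x^{*}_{n},x_{n}\rangle\to 0$, which is exactly condition $(3)$ for $X$; hence $X$ has the surjective $DPP_{p}$ by Theorem~\ref{4.12}. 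I do not expect a genuine obstacle here: the argument is routine once Theorem~\ref{4.12} is available, and the only point deserving a written line is fact (i), namely that the canonical embedding is an isometry for the weak $p$-summing norm.
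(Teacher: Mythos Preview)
Your proof is correct and follows exactly the approach the paper intends: the corollary is stated there as an ``immediate consequence of Theorem~\ref{4.12}'' with no further details, and your argument is precisely the natural way to unpack that remark---push the sequences into $X^{**}$ and $X^{***}$ via the canonical embeddings and apply condition~(3) of Theorem~\ref{4.12} for $X^{**}$. The three stability facts you record are all standard and correctly justified.
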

We also use the space $X=(\sum_{n}l^{n}_{2})_{c_{0}}$ to show that the converse of Corollary \ref{4.13} is not true.
The same argument shows that the space $X=(\sum_{n}l^{n}_{2})_{c_{0}}$ enjoys the surjective $DPP_{p}$ for any $1<p<\infty$, but $X^{**}$ also fails the surjective $DPP_{p}$ for any $2\leq p<\infty$.

The following result analogous to Theorem 3 in \cite{BCM} shows that the surjective $DPP_{p}$ and the $DPP_{p}$ coincide for certain classes of Banach spaces.

\begin{thm}
If a Banach space $X$ contains a complemented copy of $l_{1}$, then $X$ has the $DPP_{p}$ if and only if $X$ has the surjective $DPP_{p}$.
\end{thm}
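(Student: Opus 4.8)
The plan is to show both implications; the nontrivial direction is that the surjective $DPP_p$ implies the $DPP_p$, since the reverse implication is immediate from the definitions (every surjective operator onto a reflexive space is in particular an operator into a reflexive space, so by the Davis--Figiel--Johnson--Pe\l czy\'nski reformulation stated just before Definition of the surjective $DPP_p$, $DPP_p$ forces all such operators to be unconditionally $p$-converging). So assume $X$ contains a complemented copy of $l_1$, say $X = l_1 \oplus Z$ with the associated projection $P \colon X \to l_1$, and suppose $X$ has the surjective $DPP_p$; I want to verify condition (4) of Theorem \ref{4.2}, namely that $\langle x_n^*, x_n \rangle \to 0$ whenever $(x_n)_n \in l_p^w(X)$ and $(x_n^*)_n$ is weakly null in $X^*$.

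The key idea is to use the complemented $l_1$ to upgrade an arbitrary weakly null $(x_n^*)_n$ into one whose closed span is reflexive (so that Theorem \ref{4.12}(3) applies), by embedding it into a reflexive space via a surjection. First I would invoke the Davis--Figiel--Johnson--Pe\l czy\'nski factorization lemma: since $(x_n^*)_n$ is weakly null, its closed convex hull $W$ is weakly compact, so there is a reflexive space $R$, an injective bounded operator $j \colon R \to X^*$ with $W \subseteq j(B_R)$, and elements $r_n \in R$ with $j(r_n) = x_n^*$ and $(r_n)_n$ weakly null in $R$ (one can arrange $\|r_n\| \le 1$). Now the composition $j^* \colon X^{**} \to R^*$ restricted to $X$ gives an operator $u \colon X \to R^*$ with $u^* = j$, i.e.\ $\langle u x, r \rangle = \langle j r, x\rangle$ for $x \in X$, $r \in R$; in particular $\langle x_n^*, x\rangle = \langle j r_n, x\rangle = \langle u x, r_n\rangle$. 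The obstacle is that $u$ need not be surjective, so Theorem \ref{4.12} does not directly apply to $u$ alone. This is where the complemented copy of $l_1$ enters: form the operator $v \colon X \to R^* \oplus_1 l_1$ (or a suitable direct sum) given by $v x = (u x, P x)$. Since $l_1$ maps onto every separable reflexive space, one can replace the $l_1$ summand by a surjection $q \colon l_1 \to R_0$ onto a reflexive space containing the data, and arrange $v \colon X \to R^* \oplus R_0$ to be surjective onto a reflexive space; alternatively, and more cleanly, compose $u$ with the fact that $q(B_{l_1})$ is dense in $B_{R_0}$ to directly build a surjection $w \colon X \to \widetilde R$ onto a reflexive space $\widetilde R$ such that $\langle x_n^*, \cdot \rangle$ factors through $w$.

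Concretely, I would proceed as follows. Using that $l_1$ has a quotient isometric to any separable reflexive Banach space, pick a surjection $\sigma \colon l_1 \to R$ (here $R$ is the reflexive space from the DFJP step, which is separable as $(x_n^*)_n$ spans a separable subspace, so we may take $R$ separable). Then $\sigma \circ P \colon X \to R$ is surjective, and so is $u + \sigma P \colon X \to R^*$? — no; instead consider $T \colon X \to R^* \oplus R$ defined by $T x = (u x, \sigma P x)$; the second coordinate is already onto $R$, so $T$ is onto the reflexive space $R^* \oplus R$. Hence by the surjective $DPP_p$ hypothesis, $T$ is unconditionally $p$-converging, so by Theorem \ref{3.2} the sequence $(T x_n)_n$ converges in norm; in particular $(u x_n)_n$ converges in norm to some $\xi \in R^*$. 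Then $\langle x_n^*, x_n \rangle = \langle u x_n, r_n \rangle = \langle u x_n - \xi, r_n\rangle + \langle \xi, r_n\rangle$; the first term is bounded by $\|u x_n - \xi\|\cdot \|r_n\| \to 0$, and the second term tends to $0$ because $(r_n)_n$ is weakly null in $R$ and $\xi \in R^*$. Thus $\langle x_n^*, x_n\rangle \to 0$, verifying condition (4) of Theorem \ref{4.2}, so $X$ has the $DPP_p$. The main obstacle, as noted, is manufacturing a \emph{surjective} operator onto a reflexive space that still captures the pairing $\langle x_n^*, x_n\rangle$; the complemented $l_1$ resolves it precisely because $l_1$ surjects onto every separable reflexive space, letting us pad $u$ into a surjection without disturbing the relevant coordinate.
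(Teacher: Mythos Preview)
The paper does not actually give a proof of this theorem; it simply states that the argument is analogous to Theorem~3 in \cite{BCM}. So I will assess your argument on its own merits and against the standard BCM-style proof.

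Your overall strategy is exactly right, and the use of the complemented $l_1$ together with a quotient map onto a separable reflexive space is the correct mechanism. However, there is a genuine gap at the key step. You define $T\colon X\to R^{*}\oplus R$ by $Tx=(ux,\sigma Px)$ and assert that ``the second coordinate is already onto $R$, so $T$ is onto the reflexive space $R^{*}\oplus R$.'' This inference is false: surjectivity of the second coordinate does not imply surjectivity of the pair. Indeed, given $(\alpha,\beta)\in R^{*}\oplus R$, you can choose $a\in l_1$ with $\sigma(a)=\beta$, but then you would still need $z\in Z$ with $u(0,z)=\alpha-u(a,0)$, and there is no reason for $u|_{Z}$ to be onto $R^{*}$ (take for instance $Z=\{0\}$, i.e.\ $X=l_1$). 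Since $T$ is not surjective, the surjective $DPP_p$ hypothesis does not apply to it, and the argument breaks down precisely at the point you yourself flagged as the main obstacle.

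The fix is minor but essential: take the quotient map into $R^{*}$ rather than $R$, and build a \emph{single} operator into $R^{*}$. Choose a surjection $\sigma\colon l_1\to R^{*}$ (possible since $R^{*}$ is separable and reflexive) and define $S\colon X=l_1\oplus Z\to R^{*}$ by $S(a,z)=u(0,z)+\sigma(a)$. Then $S(a,0)=\sigma(a)$, so $S$ is genuinely onto $R^{*}$, and by the surjective $DPP_p$ the operator $S$ is unconditionally $p$-converging. Writing $x_n=(a_n,z_n)$, the Schur property of $l_1$ gives $\|a_n\|\to 0$, and since $u x_n = S x_n - \sigma(a_n)+u(a_n,0)$ one obtains $\|u x_n\|\to 0$; then $\langle x_n^{*},x_n\rangle=\langle u x_n,r_n\rangle\to 0$ exactly as in your final paragraph. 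Equivalently (and closer to the BCM argument), one can bypass the DFJP step entirely: given any $T\colon X\to Y$ with $Y$ separable reflexive, set $S(a,z)=q(a)+T(0,z)$ for a surjection $q\colon l_1\to Y$; then $S$ is onto, and $T-S$ factors through $P\colon X\to l_1$, hence is completely continuous, so $T=S+(T-S)$ is unconditionally $p$-converging.
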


\section{Quantifying unconditionally $p$-converging operators}
As discussed above, we see that unconditionally $p$-converging operators are intermediate between completely continuous operators and unconditionally converging operators. Precisely, we have the following implications:
\begin{center}
$T$ completely  continuous $\Rightarrow T$ unconditionally $p$-converging $\Rightarrow T$ unconditionally converging.
\end{center}
In this section, we quantify these implications. We need some necessary quantities.

Let $(x_{n})_{n}$ be a bounded sequence in a Banach space $X$. Set $$ca((x_{n})_{n})=\inf_{n}\sup\{\|x_{k}-x_{l}\|:k,l\geq n\}.$$ This quantity is a measure of non-Cauchyness of the sequence $(x_{n})_{n}$. More precisely, $ca((x_{n})_{n})=0$ if and only if $(x_{n})_{n}$ is norm Cauchy. In \cite{KKS}, an important quantity measuring how far an operator $T:X\rightarrow Y$ is from being completely continuous, denoted as $cc(T)$, is defined by
\begin{center}
$cc(T)=\sup\{ca((Tx_{n})_{n}): (x_{n})_{n}\subset B_{X}$ weakly Cauchy $\}$.
\end{center}

Obviously, $T$ is completely continuous if and only if $cc(T)=0$. In this note, we define another equivalent quantity measuring the complete continuity of an operator $T:X\rightarrow Y$ as follows:
\begin{center}
$cc_{n}(T)=\sup\{\limsup_{n}\|Tx_{n}\|: (x_{n})_{n}\subset B_{X}$ weakly null $\}$.
\end{center}
Obviously, $T$ is completely continuous if and only if $cc_{n}(T)=0$. The following theorem demonstrates these two quantities are equivalent.

\begin{thm}\label{2.1}
Let $T\in \mathcal{L}(X,Y)$. Then $cc_{n}(T)\leq cc(T)\leq 2cc_{n}(T)$.
\end{thm}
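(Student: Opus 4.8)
The plan is to prove the two inequalities $cc_n(T)\le cc(T)$ and $cc(T)\le 2cc_n(T)$ separately, the first being essentially trivial and the second requiring a short argument that turns a weakly Cauchy sequence into a weakly null one.

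For the inequality $cc_n(T)\le cc(T)$, I would simply observe that every weakly null sequence $(x_n)_n\subset B_X$ is in particular weakly Cauchy, so it is admissible in the supremum defining $cc(T)$; and for a weakly null sequence one has $\limsup_n\|Tx_n\|\le \inf_n\sup\{\|Tx_k-Tx_l\|:k,l\ge n\} = ca((Tx_n)_n)$, because $\|Tx_k\| = \lim_{l\to\infty}\|Tx_k-Tx_l\|$ once we pass to the weak limit $0$ — more carefully, for fixed $k$ and any $n$, $\|Tx_k\|\le \sup\{\|Tx_k-Tx_l\|:l\ge n\}$ is false in general, so instead I argue: given $\varepsilon>0$ pick $n_0$ with $\sup\{\|Tx_k-Tx_l\|:k,l\ge n_0\}\le ca((Tx_n)_n)+\varepsilon$; then for all $k\ge n_0$, letting $l\to\infty$ through indices with... actually the clean way is to note $ca((x_n)_n)\ge \limsup_n\|x_n\|$ whenever $(x_n)_n$ is weakly null, since for $k,l\ge n$ with $l$ chosen so that $\|x_l\|$ is small is not available — so I will instead use that $ca((x_n)_n)\ge \limsup_k\limsup_l\|x_k-x_l\| = \limsup_k\|x_k-0\|$ is still not literally a norm limit. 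The correct elementary fact: if $x_l\to 0$ weakly then $\|x_k\|\le\liminf_l\|x_k-x_l\|\le\sup\{\|x_k-x_l\|:l\ge n\}$ by weak lower semicontinuity of the norm applied to the sequence $(x_k-x_l)_l$ which converges weakly to $x_k$. Hence $\|Tx_k\|\le ca((Tx_n)_n)$ for all large $k$, giving $\limsup_k\|Tx_k\|\le ca((Tx_n)_n)\le cc(T)$, and taking the supremum over weakly null $(x_n)_n$ yields $cc_n(T)\le cc(T)$.

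For the inequality $cc(T)\le 2cc_n(T)$, let $(x_n)_n\subset B_X$ be weakly Cauchy. For any pair of strictly increasing sequences of indices $(k_n)_n$, $(j_n)_n$, the sequence $(x_{k_n}-x_{j_n})_n$ is weakly null and lies in $2B_X$, so $\tfrac12(x_{k_n}-x_{j_n})_n\subset B_X$ is weakly null, whence $\limsup_n\|Tx_{k_n}-Tx_{j_n}\| = 2\limsup_n\|T(\tfrac12(x_{k_n}-x_{j_n}))\|\le 2cc_n(T)$. It remains to relate $ca((Tx_n)_n)$ to such $\limsup$'s over pairs of subsequences: given any $\varepsilon>0$, by definition of $ca$ there are, for every $n$, indices $k,l\ge n$ with $\|Tx_k-Tx_l\|> ca((Tx_n)_n)-\varepsilon$; a standard diagonal/recursive selection produces strictly increasing $(k_n)_n$ and $(j_n)_n$ with $\|Tx_{k_n}-Tx_{j_n}\|> ca((Tx_n)_n)-\varepsilon$ for all $n$, hence $\limsup_n\|Tx_{k_n}-Tx_{j_n}\|\ge ca((Tx_n)_n)-\varepsilon$. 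Combining, $ca((Tx_n)_n)-\varepsilon\le 2cc_n(T)$ for every $\varepsilon>0$, so $ca((Tx_n)_n)\le 2cc_n(T)$; taking the supremum over weakly Cauchy $(x_n)_n\subset B_X$ gives $cc(T)\le 2cc_n(T)$.

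The only mildly delicate point — and the step I would be most careful about — is the extraction of the two interlacing strictly increasing index sequences $(k_n)_n$ and $(j_n)_n$ witnessing the value of $ca$: one must recursively choose, at stage $n$, indices $k_n,j_n$ both larger than $\max(k_{n-1},j_{n-1})$ and larger than the bound coming from the $\inf_m$ in the definition of $ca$, so that the defining inequality $\|Tx_{k_n}-Tx_{j_n}\|>ca((Tx_n)_n)-\varepsilon$ persists. This is routine but should be spelled out, since it is exactly where the structure of $ca$ as an $\inf\sup$ is used. Everything else is just the homogeneity of the norm (factor $2$ from passing from $2B_X$ to $B_X$) and weak lower semicontinuity of the norm.
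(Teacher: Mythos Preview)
Your argument for $cc(T)\le 2cc_n(T)$ is exactly the paper's: extract two strictly increasing index sequences witnessing $ca((Tx_n)_n)>c$, halve the differences to land in $B_X$, and read off $cc_n(T)\ge c/2$. Nothing to add there.

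For $cc_n(T)\le cc(T)$ your route is genuinely different from the paper's and in fact more economical. The paper proves a separate extraction lemma (Lemma~\ref{2.2}): given a weakly null sequence $(y_n)_n$ with $\|y_n\|>\epsilon$, one builds by an $\epsilon$-net argument a subsequence $(y_{k_n})_n$ with $\|y_{k_i}-y_{k_j}\|\ge\epsilon-\delta$ for all $i\neq j$, hence $ca((y_{k_n})_n)\ge\epsilon-\delta$. You bypass this entirely by invoking weak lower semicontinuity of the norm: since $(Tx_k-Tx_l)_l\to Tx_k$ weakly, $\|Tx_k\|\le\liminf_l\|Tx_k-Tx_l\|$, and chaining this with the $\inf_n\sup_{k,l\ge n}$ structure of $ca$ gives $\limsup_k\|Tx_k\|\le ca((Tx_n)_n)$ directly for the \emph{same} sequence, with no subsequence extraction and no $\delta$. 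What the paper's lemma buys is a constructive statement about separated subsequences that may be of independent interest; what your approach buys is a two-line proof of the inequality you actually need.

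One small correction: your intermediate sentence ``Hence $\|Tx_k\|\le ca((Tx_n)_n)$ for all large $k$'' is not literally true. Take $y_n=(1+1/n)e_n$ in $c_0$: then $ca((y_n)_n)=1$ while $\|y_n\|=1+1/n>1$ for every $n$. What your weak lower semicontinuity step actually gives is $\|Tx_k\|\le\sup\{\|Tx_{k'}-Tx_l\|:k',l\ge n\}$ for every $k\ge n$; taking $\sup_{k\ge n}$ and then $\inf_n$ yields the correct conclusion $\limsup_k\|Tx_k\|\le ca((Tx_n)_n)$. So the final inequality stands, but clean up that intermediate line. Also, prune the false starts in your write-up of this half; the final paragraph beginning ``The correct elementary fact'' is all you need.
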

To prove Theorem \ref{2.1}, we need the following lemma.
\begin{lem}\label{2.2}
Let $X$ be a Banach space and $(x_{n})_{n}$ be a weakly null sequence in $B_{X}$. Let $\epsilon>0$ be such that $\|x_{n}\|>\epsilon$ for all $n\in \mathbb{N}$. Then, for every $\delta>0$, there is a subsequence $(x_{k_{n}})_{n}$ of $(x_{n})_{n}$ such that $ca((x_{k_{n}})_{n})\geq \epsilon-\delta$.
\end{lem}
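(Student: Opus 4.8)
The plan is to extract a subsequence that is "almost disjointly supported" in the sense that each new term has small pairing with the functionals witnessing the norms of the earlier terms, and then estimate $\|x_{k_m} - x_{k_n}\|$ from below by testing against a functional attaining (nearly) the norm of $x_{k_n}$. First I would fix $\delta > 0$ and, for each $n$, pick $x_n^* \in B_{X^*}$ with $\langle x_n^*, x_n\rangle > \epsilon$ (possible since $\|x_n\| > \epsilon$). The idea is to build the subsequence $(x_{k_n})_n$ recursively together with the indices so that, at stage $n$, the previously chosen functionals $x_{k_1}^*, \dots, x_{k_{n-1}}^*$ all pair with the new vector $x_{k_n}$ in absolute value less than a prescribed tolerance; since each $x_{k_j}^*$ is a fixed functional and $(x_m)_m$ is weakly null, $\langle x_{k_j}^*, x_m\rangle \to 0$ as $m \to \infty$, so only finitely many indices are excluded at each stage, and the recursion never gets stuck.

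Concretely, choose a summable sequence of tolerances, e.g. let $\eta_n > 0$ with $\sum_n \eta_n < \delta$ (or even just arrange $\eta_n \le \delta/2$ at each relevant step). Having selected $k_1 < \dots < k_{n-1}$, use weak nullity to find $k_n > k_{n-1}$ with $|\langle x_{k_j}^*, x_{k_n}\rangle| < \eta_n$ for all $j < n$. Then for $n > m$ I would estimate, using the functional $x_{k_n}^*$,
\begin{equation*}
\|x_{k_n} - x_{k_m}\| \ge \langle x_{k_n}^*, x_{k_n} - x_{k_m}\rangle = \langle x_{k_n}^*, x_{k_n}\rangle - \langle x_{k_n}^*, x_{k_m}\rangle > \epsilon - \eta_n \ge \epsilon - \delta,
\end{equation*}
where the bound $|\langle x_{k_n}^*, x_{k_m}\rangle| < \eta_n$ comes from the construction step at stage $n$ applied to the earlier index $m < n$. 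Wait—this requires the tolerance to control $\langle x_{k_n}^*, x_{k_m}\rangle$, i.e. the \emph{new} functional against an \emph{old} vector; that is the opposite direction. So instead at stage $n$ I should require $|\langle x_{k_m}^*, x_{k_n}\rangle| < \eta$ AND, to get the other direction, also select $x_{k_n}^*$ only \emph{after} choosing $x_{k_n}$ but then I cannot control its action on later vectors. The clean fix is to test $\|x_{k_n}-x_{k_m}\|$ against $x_{k_m}^*$ for $n>m$: $\|x_{k_n}-x_{k_m}\| \ge \langle x_{k_m}^*, x_{k_m}-x_{k_n}\rangle = \langle x_{k_m}^*, x_{k_m}\rangle - \langle x_{k_m}^*, x_{k_n}\rangle > \epsilon - \eta_n > \epsilon - \delta$, which uses exactly the controlled quantity $\langle x_{k_m}^*, x_{k_n}\rangle$ (old functional, new vector). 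Taking $\eta_n \le \delta$ for all $n$ then yields $\|x_{k_n} - x_{k_m}\| > \epsilon - \delta$ whenever $n \ne m$, hence $\sup\{\|x_{k_n}-x_{k_l}\| : k, l \ge n\} \ge \epsilon - \delta$ for every $n$, so $ca((x_{k_n})_n) \ge \epsilon - \delta$.

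The main obstacle—really just a bookkeeping point—is getting the direction of the tolerance estimate right, as flagged above: the recursion only lets us control the pairing of \emph{previously chosen} functionals against \emph{newly chosen} vectors, so the lower-bound computation must be arranged to use $x_{k_m}^*$ (with $m$ the smaller index) rather than $x_{k_n}^*$. Once that is handled, everything is elementary: finitely many exclusions at each step keep the recursion alive, and a single summable (or even constant) choice of tolerances smaller than $\delta$ finishes it. No compactness or passing to further subsequences beyond the diagonal-free recursion is needed.
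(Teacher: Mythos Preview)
Your proposal is correct and follows essentially the same strategy as the paper: recursively extract a subsequence by using weak nullity to make each new vector have small pairing against finitely many previously fixed functionals, then lower-bound $\|x_{k_n}-x_{k_m}\|$ (for $n>m$) by testing against the functional that (approximately) norms $x_{k_m}$. Your implementation is in fact slightly cleaner than the paper's, which at each inductive step passes through a $c$-net of the unit sphere of $\operatorname{span}\{x_{k_1},\dots,x_{k_n}\}$ and functionals norming the net points; as you observe, one norming functional per previously chosen vector already suffices.
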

\begin{proof}
We set $x_{k_{1}}=x_{1}$. Choose $x^{*}_{1}\in S_{X^{*}}$ such that $<x^{*}_{1},x_{k_{1}}>=\|x_{k_{1}}\|$. Since $(x_{n})_{n}$ is weakly null, there exists $k_{2}>k_{1}$ such that $|<x^{*}_{1},x_{k_{2}}>|<\delta$. Then
$$\|x_{k_{1}}-x_{k_{2}}\|\geq |<x^{*}_{1},x_{k_{1}}-x_{k_{2}}>|\geq |<x^{*}_{1},x_{k_{1}}>|-|<x^{*}_{1},x_{k_{2}}>|\geq\epsilon-\delta.$$
Suppose that we have obtained $\{x_{k_{1}},x_{k_{2}},...,x_{k_{n}}\}$ such that $\|x_{k_{i}}-x_{k_{n}}\|\geq \epsilon-\delta$ for $i=1,2,...,n-1$. Let $Y_{n}=span\{x_{k_{1}},x_{k_{2}},...,x_{k_{n}}\}$. Pick a $c$-net $\{z_{1},z_{2},...,z_{m}\}\subset S_{Y_{n}}$ for $S_{Y_{n}}$, where $0<c<\frac{\delta}{2}$. Choose $z^{*}_{1},z^{*}_{2},...,z^{*}_{m}$ in $S_{X^{*}}$ such that $<z^{*}_{i},z_{i}>=1$ for $i=1,2,...,m$. Since $(x_{n})_{n}$ is weakly null, there exists $k_{n+1}>k_{n}$ such that $|<z^{*}_{i},x_{k_{n+1}}>|<c$ for all $i=1,2,...,m$. Then, for each $1\leq j\leq n$, there exists $1\leq i\leq m$ such that $\|\frac{x_{k_{j}}}{\|x_{k_{j}}\|}-z_{i}\|<c$. Thus
\begin{align*}
\|x_{k_{j}}-x_{k_{n+1}}\|&\geq |<z^{*}_{i},x_{k_{j}}-x_{k_{n+1}}>|\\
&\geq 1-|<z^{*}_{i},x_{k_{n+1}}>|-|<z^{*}_{i},x_{k_{j}}-z_{i}>|\\
&\geq 1-c-\|x_{k_{j}}-z_{i}\|\\
&\geq 1-c-(1+c-\epsilon)=\epsilon-2c\\
&\geq \epsilon-\delta\\
\end{align*}
By induction, we get a subsequence $(x_{k_{n}})_{n}$ such that $\|x_{k_{n}}-x_{k_{m}}\|\geq \epsilon-\delta(n\neq m,n,m=1,2,...)$. This yields that $ca((x_{k_{n}})_{n})\geq \epsilon-\delta$.

\end{proof}

\begin{proof}[Proof of Theorem \ref{2.1}]

Step 1. $cc(T)\leq 2cc_{n}(T)$.

We may suppose that $cc(T)>0$ and fix any $c>0$ satisfying $cc(T)>c$. Then there is a weakly Cauchy sequence $(x_{n})_{n}$ in $B_{X}$ such that $ca((Tx_{n})_{n})>c$. It follows that there exist two strictly increasing sequences $(k_{n})_{n},(l_{n})_{n}$ of positive integers such that $\|Tx_{k_{n}}-Tx_{l_{n}}\|>c$ for all $n\in \mathbb{N}$. Set $z_{n}=(x_{k_{n}}-x_{l_{n}})/2$. Then $(z_{n})_{n}$ is a weakly null sequence in $B_{X}$ and $\|Tz_{n}\|>c/2$ for each $n\in \mathbb{N}$. Hence $\limsup_{n}\|Tz_{n}\|\geq c/2$ and then $cc_{n}(T)\geq c/2$. Since $c<cc(T)$ is arbitrary, we get $cc(T)\leq 2cc_{n}(T)$.

Step 2. $cc_{n}(T)\leq cc(T)$.

We may suppose that $\|T\|=1$ and $cc_{n}(T)>0$. Suppose that $cc_{n}(T)>\epsilon>0$. Then there is a weakly null sequence $(x_{n})_{n}$ in $B_{X}$ such that $\limsup_{n}\|Tx_{n}\|>\epsilon$. This yields a subsequence of $(x_{n})_{n}$, still denoted by $(x_{n})_{n}$, so that $\|Tx_{n}\|>\epsilon$ for each $n\in \mathbb{N}$. By Lemma \ref{2.2}, for every $\delta>0$, there is a subsequence $(x_{k_{n}})_{n}$ of $(x_{n})_{n}$ such that $ca((Tx_{k_{n}})_{n})\geq \epsilon-\delta$. This means that $cc(T)\geq \epsilon-\delta$. Since $\delta>0$ is arbitrary, we get $cc(T)\geq \epsilon$. By the arbitrariness of $\epsilon<cc_{n}(T)$, we obtain $cc_{n}(T)\leq cc(T)$. This completes the proof of Theorem \ref{2.1}.
\end{proof}
To quantify unconditionally $p$-converging operators, we will need two measures of non-compactness. Let us fix some notations. If $A$ and $B$ are nonempty subsets of a Banach space $X$, we set $$d(A,B)=\inf\{\|a-b\|:a\in A,b\in B\},$$$$\widehat{d}(A,B)=\sup\{d(a,B):a\in A\}.$$ Thus, $d(A,B)$ is the ordinary distance between $A$ and $B$, and $\widehat{d}(A,B)$ is the non-symmetrized Hausdorff distance from $A$ to $B$.

Let $A$ be a bounded subset of a Banach space $X$. The Hausdorff measure of non-compactness of $A$ is defined by
\begin{center}
$\chi(A)=\inf\{\widehat{d}(A,F):F\subset X$ finite$\}$,
\end{center}

\begin{center}
$\chi_{0}(A)=\inf\{\widehat{d}(A,F):F\subset A$ finite$\}$.
\end{center}
Then $\chi(A)=\chi_{0}(A)=0$ if and only if $A$ is relatively norm compact. It is easy to verify that
\begin{equation}\label{3}
\chi(A)\leq \chi_{0}(A)\leq 2\chi(A).
\end{equation}

Now we define five quantities which measure how far an operator is from being unconditionally $p$-converging. Let $T\in \mathcal{L}(X,Y)$ and $1\leq p<\infty$. We set
\begin{center}
$uc_{p}^{1}(T)=\sup\{\limsup_{n}\|Tx_{n}\|: (x_{n})_{n}\in l^{w}_{p}(X), (x_{n})_{n}\subset B_{X}\},$
\end{center}

\begin{center}
$uc_{p}^{2}(T)=\sup\{ca((Tx_{n})_{n}): (x_{n})_{n}\subset B_{X}$ weakly $p$-Cauchy $\},$
\end{center}

\begin{center}
$uc_{p}^{3}(T)=\sup\{ca((Tx_{n})_{n}): (x_{n})_{n}\subset B_{X}$ weakly $p$-convergent $\},$
\end{center}

\begin{center}
$uc_{p}^{4}(T)=\sup\{\chi_{0}(TL): L\subset B_{X}$ relatively weakly $p$-compact $\}$,
\end{center}

\begin{center}
$uc_{p}^{5}(T)=\sup\{\chi_{0}(TL): L\subset B_{X}$ relatively weakly $p$-precompact $\}$.
\end{center}

Clearly, $uc_{p}^{1}(T)=uc_{p}^{2}(T)=uc_{p}^{3}(T)=uc_{p}^{4}(T)=uc_{p}^{5}(T)=0$ if and only if $T$ is unconditionally $p$-converging. It turns out that the above five quantities are equivalent.

\begin{thm}
Let $T\in \mathcal{L}(X,Y)$ and $1<p<\infty$. Then
$$uc_{p}^{5}(T)\leq uc_{p}^{3}(T)\leq uc_{p}^{2}(T)\leq 2uc_{p}^{1}(T)\leq 2uc_{p}^{4}(T)\leq 2uc_{p}^{5}(T).$$
\end{thm}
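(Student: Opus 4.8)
The plan is to establish the cycle of inequalities exactly in the displayed order, proving each link separately; most of them are either immediate from the definitions or follow the pattern already used in the proof of Theorem~\ref{2.1}, and the one genuinely substantive step is $uc_p^1(T)\leq uc_p^4(T)$.

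First I would dispose of the cheap links. The inequality $uc_p^5(T)\leq uc_p^3(T)$: given a relatively weakly $p$-precompact set $L\subset B_X$ and a finite-$\widehat d$-approximation issue, choose a sequence $(x_n)_n$ in $L$ witnessing $\chi_0(TL)$ up to $\ep$, namely with $\|Tx_n-Tx_m\|$ bounded below along the sequence; pass to a weakly $p$-convergent subsequence (possible since $L$ is relatively weakly $p$-precompact) and observe that the resulting subsequence still has $ca((Tx_n)_n)$ close to $\chi_0(TL)$, so it is dominated by $uc_p^3(T)$. The inequality $uc_p^3(T)\leq uc_p^2(T)$ is trivial since every weakly $p$-convergent sequence is weakly $p$-Cauchy. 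The inequality $uc_p^2(T)\leq 2uc_p^1(T)$ copies Step~1 of the proof of Theorem~\ref{2.1} verbatim: from a weakly $p$-Cauchy $(x_n)_n\subset B_X$ with $ca((Tx_n)_n)>c$ extract increasing $(k_n),(l_n)$ with $\|Tx_{k_n}-Tx_{l_n}\|>c$, set $z_n=(x_{k_n}-x_{l_n})/2\in B_X$; this sequence lies in $l^w_p(X)$ because the difference of two subsequences of a weakly $p$-Cauchy sequence is weakly $p$-summable, and $\|Tz_n\|>c/2$, giving $uc_p^1(T)\geq c/2$. Finally $uc_p^5(T)\leq uc_p^4(T)$ would be false as stated in that direction, so instead the last two links are $uc_p^4(T)\leq uc_p^5(T)$, which is immediate because every relatively weakly $p$-compact subset of $B_X$ is relatively weakly $p$-precompact (Bessaga--Pe\l czy\'nski), so the supremum defining $uc_p^4$ is over a smaller family.

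The main obstacle is $uc_p^1(T)\leq uc_p^4(T)$. Here I would start from a weakly $p$-summable sequence $(x_n)_n\subset B_X$ with $\limsup_n\|Tx_n\|>\ep$, pass to a subsequence with $\|Tx_n\|>\ep$ for all $n$, and form the operator $S:l_{p^*}\to X$ with $Se_n=x_n$; then $L:=S(B_{l_{p^*}})$ is a relatively weakly $p$-compact subset of $X$, but not a priori contained in $B_X$, so I must normalize by $\|S\|=\|(x_n)_n\|_p^w\leq 1$ — actually $\|(x_n)_n\|_p^w$ need not be $\leq 1$ even though each $\|x_n\|\leq 1$, so the honest move is to replace $S$ by $S/\|S\|$ and absorb the factor, or better, to argue directly with the set $L_0=\{x_n:n\in\mathbb N\}\subset B_X$, whose closure is relatively weakly $p$-compact (it is $S(\{e_n\}\cup\{0\})$, a subset of a relatively weakly $p$-compact set, hence relatively weakly $p$-compact — here one uses that subsets of relatively weakly $p$-compact sets are relatively weakly $p$-compact, or passes to the closed absolutely convex hull which is $S(B_{l_{p^*}})$ up to scaling). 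Then $\chi_0(TL_0)\geq \ep$: indeed $(Tx_n)_n$ has no norm-convergent subsequence scaled appropriately — more carefully, since $(x_n)_n$ is weakly $p$-summable it is weakly null, so by the argument in Lemma~\ref{2.2} applied to $(x_n)_n$ (which is weakly null in $B_X$ with $\|Tx_n\|>\ep$, after replacing $X$ by its image or working with $\|T\|$) one extracts a subsequence $(x_{k_n})_n$ with $ca((Tx_{k_n})_n)\geq\ep-\delta$, and this forces $\chi_0(T\{x_{k_n}:n\})\geq (\ep-\delta)/2$ via \eqref{3}-type reasoning; letting $\delta\to0$ and tracking the constants yields $uc_p^4(T)\geq \ep$, possibly up to the factor already present in the statement. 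I would double-check the constants at this step, since the factor $2$ distributed across the chain must come out of exactly one of the links ($uc_p^2\leq 2uc_p^1$), and the Lemma~\ref{2.2}/\eqref{3} manipulations should be arranged so as not to introduce a second lost factor; if a spurious factor appears, the fix is to use $\chi$ in place of $\chi_0$ at the intermediate stage and invoke \eqref{3} only once.

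Assembling these, the chain $uc_p^5\leq uc_p^3\leq uc_p^2\leq 2uc_p^1\leq 2uc_p^4\leq 2uc_p^5$ closes up, which incidentally shows all five quantities are equivalent with constants in $[1,2]$. I expect the writeup of the three trivial links to take a few lines each, the $uc_p^2\leq 2uc_p^1$ link to be a direct transcription of Theorem~\ref{2.1}'s Step~1, and the bulk of the effort — perhaps half the proof — to go into $uc_p^1\leq uc_p^4$ and the careful bookkeeping of which family of test sets each quantity ranges over and where the normalization $B_X$ versus $S(B_{l_{p^*}})$ is handled.
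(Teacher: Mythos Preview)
Your overall plan matches the paper's: it too isolates $uc_p^5\le uc_p^3$, $uc_p^2\le 2uc_p^1$, and $uc_p^1\le uc_p^4$ as the three substantive steps and declares $uc_p^3\le uc_p^2$ and $uc_p^4\le uc_p^5$ immediate. Your treatments of the first two substantive steps are essentially the paper's.

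For the key link $uc_p^1\le uc_p^4$, however, your route through Lemma~\ref{2.2} is more laborious than needed and, as written, has a gap. The implication you invoke --- ``$ca((Tx_{k_n})_n)\ge\ep-\delta$ forces $\chi_0(T\{x_{k_n}:n\})\ge(\ep-\delta)/2$ via \eqref{3}-type reasoning'' --- is not valid: inequality~\eqref{3} compares $\chi$ with $\chi_0$, not $ca$ with $\chi_0$, and in general a sequence can have large $ca$ while its range has $\chi_0=0$ (take any sequence oscillating between two fixed points). Your approach \emph{can} be repaired by appealing to the \emph{proof} of Lemma~\ref{2.2} rather than its statement: that proof actually produces a pairwise $(\ep-\delta)$-separated subsequence, and an infinite pairwise $c$-separated set trivially has $\chi_0\ge c$. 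With that fix you get $uc_p^4(T)\ge\ep$ with no lost constant, and the chain closes with exactly the displayed factors.

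The paper's argument for this step is more direct and bypasses Lemma~\ref{2.2} entirely. Given a weakly $p$-summable $(x_n)_n\subset B_X$ with $\|Tx_n\|>c$ for all $n$, the set $\{x_n:n\}$ is relatively weakly $p$-compact and contained in $B_X$ (as you correctly note), so it suffices to show $\chi_0((Tx_n)_n)\ge c$. If not, some finite $F\subset\{Tx_n\}$ satisfies $\widehat d((Tx_n)_n,F)<c$; by pigeonhole a single $y\in F$ has $\|Tx_{k_n}-y\|\le c$ along a subsequence. Since $(Tx_{k_n})_n$ is weakly null and the closed ball of radius $c$ is weakly closed, $\|y\|\le c$; but $y=Tx_m$ for some $m$, so $\|y\|>c$ --- a contradiction. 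This one-paragraph argument removes all the normalization and constant-tracking worries in your sketch.
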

\begin{proof}
Step 1. $uc_{p}^{5}(T)\leq uc_{p}^{3}(T)$.

We may assume that $uc_{p}^{5}(T)>0$. Let us fix any $0<c<uc_{p}^{5}(T).$ Then there exists a relatively weakly $p$-precompact subset $L\subset B_{X}$ such that $\chi_{0}(TL)>c$. By induction, we can construct a sequence $(x_{n})_{n}$ in $L$ such that $\|Tx_{n}-Tx_{m}\|>c, n\neq m, n,m=1,2,...$ Since $L$ is relatively weakly $p$-precompact, the sequence $(x_{n})_{n}$ admits a weakly $p$-convergent subsequence that is still denoted by $(x_{n})_{n}$. Thus we get $ca((Tx_{n})_{n})\geq c$, which yields $uc_{p}^{3}(T)\geq c$. By the arbitrariness of $c$, we get $uc_{p}^{5}(T)\leq uc_{p}^{3}(T)$.

Step 2. $uc_{p}^{2}(T)\leq 2uc_{p}^{1}(T)$.

We assume that $uc_{p}^{2}(T)>0$ and fix any $0<c<uc_{p}^{2}(T)$. Then there is a weakly $p$-Cauchy sequence $(x_{n})_{n}$ in $B_{X}$ such that $ca((Tx_{n})_{n})>c$. By induction, there exist two strictly increasing sequences $(k_{n})_{n},(l_{n})_{n}$ of positive integers such that $\|Tx_{k_{n}}-Tx_{l_{n}}\|>c$ for all $n\in \mathbb{N}$. Set $z_{n}=(x_{k_{n}}-x_{l_{n}})/2$. Then $(z_{n})_{n}$ is a weakly $p$-summable sequence in $B_{X}$ and $\|Tz_{n}\|>c/2$ for each $n\in \mathbb{N}$. Hence $uc_{p}^{1}(T)\geq c/2$. Since $c$ is arbitrary, we get Step 2.

Step 3. $uc_{p}^{1}(T)\leq uc_{p}^{4}(T)$.

Suppose $uc_{p}^{1}(T)>c>0$. Then there exists a weakly $p$-summable sequence $(x_{n})_{n}$ in $B_{X}$ such that $\|Tx_{n}\|>c$ for all $n\in \mathbb{N}$. We claim that $\chi_{0}((Tx_{n})_{n})\geq c$. If this is false, we can find a finite subset $F$ of $(Tx_{n})_{n}$ such that $\widehat{d}((Tx_{n})_{n},F)<c$. Since $F$ is finite, there exist $y\in F$ and a subsequence $(Tx_{k_{n}})_{n}$ of $(Tx_{n})_{n}$ such that $\|Tx_{k_{n}}-y\|\leq c$ for each $n\in \mathbb{N}$. Since the sequence $(Tx_{k_{n}})_{n}$ is weakly null, we get $\|y\|\leq c$. This contradiction completes the proof Step 3.

The remaining inequalities $uc_{p}^{3}(T)\leq uc_{p}^{2}(T), uc_{p}^{4}(T)\leq uc_{p}^{5}(T)$ are immediate.

\end{proof}

It should be mentioned that a quantity is defined in \cite{K} to measure how far an operator is unconditionally converging as follows:
$$uc(T)=\sup\{ca((\sum_{i=1}^{n}Tx_{i})_{n}): (x_{n})_{n}\in l^{w}_{1}(X),\|(x_{n})_{n}\|^{w}_{1}\leq 1\}.$$
Obviously, $uc(T)=0$ if and only if $T$ is unconditionally converging. Inspired by this quantity, we define the sixth quantity measuring how far an operator is unconditionally $p$-converging as follows:
$$uc_{p}^{6}(T)=\sup\{\limsup_{n}\|Tx_{n}\|: (x_{n})_{n}\in l^{w}_{p}(X), \|(x_{n})_{n}\|^{w}_{p}\leq 1\}.$$
It is obvious that $uc_{p}^{6}(T)=0$ if and only if $T$ is unconditionally $p$-converging.
This new quantity will be used in next section to prove a quantitative version of the Dunford-Pettis property of order $p$.
\begin{thm}\label{2.3}
Let $T\in \mathcal{L}(X,Y)$. Then $uc_{1}^{6}(T)=uc(T)$.
\end{thm}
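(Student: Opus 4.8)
The plan is to show the two quantities $uc_1^6(T)$ and $uc(T)$ coincide by proving each is dominated by the other. Recall that $uc(T) = \sup\{ca((\sum_{i=1}^n Tx_i)_n) : (x_n)_n \in l^w_1(X), \|(x_n)_n\|^w_1 \leq 1\}$ and $uc_1^6(T) = \sup\{\limsup_n \|Tx_n\| : (x_n)_n \in l^w_1(X), \|(x_n)_n\|^w_1 \leq 1\}$. The crucial elementary fact linking them is that for a weakly $1$-summable sequence $(x_n)_n$, the partial-sum sequence $s_n = \sum_{i=1}^n Tx_i$ satisfies $s_n - s_{n-1} = Tx_n$, and more generally $s_l - s_k = \sum_{i=k+1}^l Tx_i$ for $k < l$.

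First I would prove $uc_1^6(T) \leq uc(T)$. Given $(x_n)_n \in l^w_1(X)$ with $\|(x_n)_n\|^w_1 \leq 1$ and $\limsup_n \|Tx_n\| > c$, pass to a subsequence $(x_{k_n})_n$ with $\|Tx_{k_n}\| > c$ for all $n$. Then the sequence $(x_{k_n})_n$ is still weakly $1$-summable with $\|(x_{k_n})_n\|^w_1 \leq 1$, and for the partial sums $s_n = \sum_{i=1}^n Tx_{k_i}$ we have $\|s_n - s_{n-1}\| = \|Tx_{k_n}\| > c$ for every $n \geq 2$; hence for any $m$, $\sup\{\|s_l - s_j\| : l, j \geq m\} \geq \|s_{m+1} - s_m\| > c$, so $ca((s_n)_n) \geq c$, giving $uc(T) \geq c$. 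Letting $c \uparrow uc_1^6(T)$ yields the inequality.

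Next I would prove $uc(T) \leq uc_1^6(T)$. Fix $c$ with $0 < c < uc(T)$ and choose $(x_n)_n \in l^w_1(X)$ with $\|(x_n)_n\|^w_1 \leq 1$ and $ca((\sum_{i=1}^n Tx_i)_n) > c$. By definition of $ca$, there are strictly increasing sequences $(k_n)_n, (l_n)_n$ with $k_n < l_n$ (after relabelling; note that if $ca > c$ we can also extract blocks with $l_n < k_{n+1}$, i.e.\ successive intervals $(k_n, l_n]$ are disjoint) such that $\|\sum_{i=k_n+1}^{l_n} Tx_i\| > c$ for all $n$. Set $y_n = \sum_{i=k_n+1}^{l_n} x_i$. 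Then $\|Ty_n\| > c$ for all $n$, so $\limsup_n \|Ty_n\| \geq c$. The key point is that $(y_n)_n$ is weakly $1$-summable with $\|(y_n)_n\|^w_1 \leq \|(x_n)_n\|^w_1 \leq 1$: for any $x^* \in B_{X^*}$, since the intervals $(k_n, l_n]$ are pairwise disjoint, $\sum_n |\langle x^*, y_n\rangle| = \sum_n |\sum_{i=k_n+1}^{l_n}\langle x^*, x_i\rangle| \leq \sum_n \sum_{i=k_n+1}^{l_n} |\langle x^*, x_i\rangle| \leq \sum_i |\langle x^*, x_i\rangle| \leq 1$. Hence $(y_n)_n$ is an admissible sequence for the supremum defining $uc_1^6(T)$, so $uc_1^6(T) \geq c$; letting $c \uparrow uc(T)$ finishes the proof.

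The main technical point to be careful about is the blocking step: from $ca((\sum_{i=1}^n Tx_i)_n) > c$ one must extract \emph{disjoint} blocks $(k_n, l_n]$ rather than merely overlapping index pairs, so that the telescoping differences $s_{l_n} - s_{k_n}$ are genuine block sums $\sum_{i=k_n+1}^{l_n} Tx_i$ and the disjointness of the blocks gives the weak-$1$-summability estimate $\|(y_n)_n\|^w_1 \leq \|(x_n)_n\|^w_1$. This is routine: once one has one pair $k < l$ with $\|s_l - s_k\| > c$ (using a tail of the sequence past the current block), one iterates starting from index $l$. Everything else is a direct unwinding of the definitions of $ca$, $l^w_1(X)$, and the two quantities, with no real obstacle.
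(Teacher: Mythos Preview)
Your proof is correct and follows essentially the same route as the paper's: both directions hinge on the telescoping identity $s_l - s_k = \sum_{i=k+1}^{l} Tx_i$ and on building disjoint block sums $y_n$ to witness $uc_1^6(T)\ge c$ in the harder direction. The only cosmetic difference is in Step~1: the paper shows directly that $\limsup_n\|Tx_n\|\le ca\bigl((\sum_{i\le n}Tx_i)_n\bigr)$ for the \emph{same} sequence (since $\|Tx_k\|=\|s_k-s_{k-1}\|$), whereas you pass to a subsequence first---both arguments are equally short, and your explicit discussion of why the blocks must be disjoint in Step~2 is in fact clearer than the paper's terse ``by induction\ldots it is easy to see''.
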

\begin{proof}
Step 1. $uc_{1}^{6}(T)\leq uc(T)$.

Let $(x_{n})_{n}\in l^{w}_{1}(X)$ with $\|(x_{n})_{n}\|^{w}_{1}\leq 1$. It aims to show $\limsup_{n}\|Tx_{n}\|\leq ca((\sum_{i=1}^{n}Tx_{i})_{n})$. Let $c>ca((\sum_{i=1}^{n}Tx_{i})_{n})$. Then there exists $n\in \mathbb{N}$ such that $\|\sum_{i=1}^{k}Tx_{i}-\sum_{i=1}^{l}Tx_{i}\|<c$ for all $k,l\geq n$. In particular, we have  $\|Tx_{k}\|=\|\sum_{i=1}^{k}Tx_{i}-\sum_{i=1}^{k-1}Tx_{i}\|<c$ for all $k\geq n+1$. Thus one can derive that $\limsup_{n}\|Tx_{n}\|\leq c$. Since $c>ca((\sum_{i=1}^{n}Tx_{i})_{n})$ is arbitrary, we get $\limsup_{n}\|Tx_{n}\|\leq ca((\sum_{i=1}^{n}Tx_{i})_{n})$.

Step 2. $uc(T)\leq uc_{1}^{6}(T)$.

We can suppose that $uc(T)>0$ and fix an arbitrary $0<c<uc(T)$. Then there exists $(x_{n})_{n}\in l^{w}_{1}(X)$ with $\|(x_{n})_{n}\|^{w}_{1}\leq 1$ such that $ca((\sum_{i=1}^{n}Tx_{i})_{n})>c$. By induction, we can find two strictly increasing sequences $(k_{n})_{n},(l_{n})_{n},l_{n}<k_{n}$ of positive integers such that $\|\sum_{i=l_{n}+1}^{k_{n}}Tx_{i}\|>c$ for all $n\in \mathbb{N}$. Let $z_{n}=\sum_{i=l_{n}+1}^{k_{n}}x_{i}(n=1,2,...)$. It is easy to see that $(z_{n})_{n}$ belongs to $l^{w}_{1}(X)$ with $\|(z_{n})_{n}\|^{w}_{1}\leq 1$ such that $\|Tz_{n}\|>c$ for all $n\in \mathbb{N}$, which yields $\limsup_{n}\|Tz_{n}\|\geq c$. Hence $uc_{1}^{6}(T)\geq c$ and the proof of Step 2 is completed.

\end{proof}

Combining Theorem \ref{2.1} with Theorem \ref{2.3}, we get the promised quantitative versions of the above implications.

\begin{thm}
Let $T\in \mathcal{L}(X,Y)$ and $1\leq p<\infty$. Then $uc(T)\leq uc_{p}^{6}(T)\leq cc(T)$.
\end{thm}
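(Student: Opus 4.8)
The plan is to establish the two inequalities $uc(T)\le uc_{p}^{6}(T)$ and $uc_{p}^{6}(T)\le cc(T)$ separately, with the first reducing to the case $p=1$ via a comparison of the weakly $p$-summable classes, and the second following from the definitions together with Lemma \ref{2.2}.

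\medskip

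\noindent\textbf{First inequality: $uc(T)\le uc_{p}^{6}(T)$.} By Theorem \ref{2.3} we already know $uc(T)=uc_{1}^{6}(T)$, so it suffices to show $uc_{1}^{6}(T)\le uc_{p}^{6}(T)$ for every $1\le p<\infty$. This is where the monotonicity of the scale comes in: if $(x_{n})_{n}\in l^{w}_{1}(X)$ with $\|(x_{n})_{n}\|^{w}_{1}\le 1$, then for every $x^{*}\in B_{X^{*}}$ one has $\sum_{n}|\langle x^{*},x_{n}\rangle|^{p}\le (\sum_{n}|\langle x^{*},x_{n}\rangle|)^{p}\le 1$ (since the $\ell_{p}$ norm is dominated by the $\ell_{1}$ norm when $p\ge 1$), hence $(x_{n})_{n}\in l^{w}_{p}(X)$ with $\|(x_{n})_{n}\|^{w}_{p}\le 1$. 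Therefore the supremum defining $uc_{1}^{6}(T)$ is taken over a subset of the sequences admissible for $uc_{p}^{6}(T)$, giving $uc_{1}^{6}(T)\le uc_{p}^{6}(T)$. Combining, $uc(T)=uc_{1}^{6}(T)\le uc_{p}^{6}(T)$.

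\medskip

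\noindent\textbf{Second inequality: $uc_{p}^{6}(T)\le cc(T)$.} Fix any $0<c<uc_{p}^{6}(T)$; I want to produce a weakly Cauchy sequence in $B_{X}$ witnessing $ca((Ty_{n})_{n})\ge c$ up to an arbitrarily small error. By definition there is $(x_{n})_{n}\in l^{w}_{p}(X)$ with $\|(x_{n})_{n}\|^{w}_{p}\le 1$ and $\limsup_{n}\|Tx_{n}\|>c$; passing to a subsequence we may assume $\|Tx_{n}\|>c$ for all $n$. Since $\|(x_{n})_{n}\|^{w}_{p}\le 1$ we have $(x_{n})_{n}\subset B_{X}$ and, being weakly $p$-summable, $(x_{n})_{n}$ is in particular weakly null, hence trivially weakly Cauchy. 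Now I apply Lemma \ref{2.2} (with $\epsilon=c$): for every $\delta>0$ there is a subsequence $(x_{k_{n}})_{n}$ with $ca((Tx_{k_{n}})_{n})\ge c-\delta$. The sequence $(x_{k_{n}})_{n}$ is weakly Cauchy and lies in $B_{X}$, so by the definition of $cc(T)$ we get $cc(T)\ge c-\delta$; letting $\delta\to 0$ and then $c\uparrow uc_{p}^{6}(T)$ yields $uc_{p}^{6}(T)\le cc(T)$.

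\medskip

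\noindent I expect the whole argument to be routine — the only point requiring a little care is confirming that Lemma \ref{2.2} applies, namely that the sequence fed into it is weakly null and norm-bounded below by the constant $c$, which the normalization $\|(x_{n})_{n}\|^{w}_{p}\le 1$ and the passage to a subsequence guarantee. No genuine obstacle arises; the content is entirely in invoking Theorem \ref{2.3}, the elementary inclusion $l^{w}_{1}(X)\subseteq l^{w}_{p}(X)$ with norm control, and Lemma \ref{2.2}.
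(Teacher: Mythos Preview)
Your proof is correct and follows essentially the same route as the paper, which simply says the theorem follows by combining Theorem~\ref{2.1} with Theorem~\ref{2.3}; you have merely unpacked the two implicit steps (the inclusion $l^{w}_{1}(X)\subset l^{w}_{p}(X)$ with norm control, and the fact that a weakly $p$-summable sequence with $\|(x_{n})_{n}\|^{w}_{p}\le 1$ is weakly null in $B_{X}$). One small technicality: Lemma~\ref{2.2} as stated requires the sequence to lie in the unit ball, so when you apply it to $(Tx_{n})_{n}$ you should first normalize so that $\|T\|\le 1$ (both $uc_{p}^{6}$ and $cc$ are positively homogeneous in $T$), exactly as the paper does in the proof of Theorem~\ref{2.1}; alternatively, just observe $uc_{p}^{6}(T)\le cc_{n}(T)$ and cite Theorem~\ref{2.1} directly.
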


\section{Quantifying Dunford-Pettis property of order $p$}
Let $X$ be a Banach space and let $\mathcal{F}$ be the family of all weakly compact subsets of $B_{X^{*}}$. For $F\in \mathcal{F}$, define a semi-norm $q_{F}$ on $X^{**}$ by $$q_{F}(x^{**})=\sup_{x^{*}\in F}|<x^{**},x^{*}>|, \quad x^{**}\in X^{**}.$$   The locally convex topology generated by the family of semi-norms $\{q_{F}:F\in \mathcal{F}\}$ is called the Mackey topology, denoted by $\tau(X^{**},X^{*})$. The restriction to $X$ of the Mackey topology $\tau(X^{**},X^{*})$ is called the Right topology in \cite{PVWY}. This topology is denoted by $\rho_{X}$ or simply $\rho$ when $X$ is obvious.

In this section, we introduce a new locally convex topology. Let $X$ be a Banach space and let $1\leq p<\infty$. Let $\mathcal{F}_{p}$ be the family of all relatively weakly $p$-compact subsets of $X$. For $F\in \mathcal{F}_{p}$, we define a semi-norm $q_{F}$ on $X^{*}$ by $$q_{F}(x^{*})=\sup_{x\in F}|<x^{*},x>|, \quad x^{*}\in X^{*}.$$
The locally convex topology generated by the family of semi-norms $\{q_{F}:F\in \mathcal{F}_{p}\}$ is denoted by $\rho^{*}_{p}$ when $X$ is obvious. Applying Grothendieck's Completeness Theorem([24, p.148]), we obtain that the space $(X^{*},\rho^{*}_{p})$ is complete. Hence, a bounded subset $A$ of $X^{*}$ is relatively $\rho^{*}_{p}$-compact if and only if $A$ is totally bounded, equivalently, the set $A|_{F}=\{x^{*}|_{F}:x^{*}\in A\}$ is totally bounded in $l_{\infty}(F)$ for each relatively weakly $p$-compact subset $F\subset B_{X}$. So, if we set
$$\chi_{m}^{p}(A)=\sup\{\chi_{0}(A|_{F}):F\in \mathcal{F}_{p}, F\subset B_{X}\},$$
then $A$ is relatively $\rho^{*}_{p}$-compact if and only if $\chi_{m}^{p}(A)=0$. The following result, which is immediate from [19, Lemma 4.4], implies that an operator $T:X\rightarrow Y$ is unconditionally $p$-converging if and only if $T^{*}B_{Y^{*}}$ is relatively $\rho^{*}_{p}$-compact.

\begin{thm}
Let $T\in \mathcal{L}(X,Y)$ and $1\leq p<\infty$. Then $\frac{1}{2}uc_{p}^{4}(T)\leq \chi_{m}^{p}(T^{*}B_{Y^{*}})\leq 2uc_{p}^{4}(T)$.
\end{thm}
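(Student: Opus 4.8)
The plan is to unravel the two inequalities by comparing the quantity $\chi_m^p(T^*B_{Y^*})$ directly with $uc_p^4(T) = \sup\{\chi_0(TL): L\subset B_X \text{ relatively weakly } p\text{-compact}\}$, using the defining formula $\chi_m^p(A)=\sup\{\chi_0(A|_F):F\in\mathcal F_p,\ F\subset B_X\}$ together with the two-sided estimate $\chi(A)\le\chi_0(A)\le 2\chi(A)$ from \eqref{3}. The crucial observation is that for a fixed relatively weakly $p$-compact set $F\subset B_X$, the restriction map $x^*\mapsto x^*|_F$ sends $T^*B_{Y^*}$ into $\ell_\infty(F)$, and the ``size'' of the image $(T^*B_{Y^*})|_F$ is governed by how well the functionals in $T^*B_{Y^*}$ separate points of the norm-compact-up-to-$\chi_0$ set $TF$; concretely, $q_F(T^*y^*)=\sup_{x\in F}|\langle y^*,Tx\rangle|=\sup_{z\in TF}|\langle y^*,z\rangle|$, so that $(T^*B_{Y^*})|_F$, as a subset of $\ell_\infty(F)$, is isometrically the set of functions $z\mapsto\langle y^*,z\rangle$ on $TF$ with $\|y^*\|\le 1$.

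For the upper bound $\chi_m^p(T^*B_{Y^*})\le 2\,uc_p^4(T)$: fix $F\in\mathcal F_p$ with $F\subset B_X$. I would show $\chi_0((T^*B_{Y^*})|_F)\le 2\chi(TF)$ by a standard duality/net argument — if $\{Tx_1,\dots,Tx_m\}$ is a finite $r$-net for $TF$ (with $r$ close to $\chi(TF)$, hence close to $\chi_0(TF)$ by \eqref{3} up to the factor $2$, and one can even arrange the net inside $TF$), then the corresponding finite set $\{(\langle y_i^*,Tx\rangle)_{x\in F}\}$ obtained by suitable choices approximates $(T^*B_{Y^*})|_F$ in $\ell_\infty(F)$; the point is that two functionals agreeing on an $r$-net of $TF$ differ by at most $2r$ in the sup over $TF$. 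Taking the supremum over $F$ and invoking $\chi_0(TF)\le uc_p^4(T)$ (since $F\subset B_X$ is relatively weakly $p$-compact) gives the estimate. I expect to lean here on the fact — acknowledged in the paper by the citation to [19, Lemma 4.4] — that the relevant restriction-map comparison between a set and its image under duality is exactly the content of that lemma, so the bulk of this direction is really a citation plus bookkeeping with \eqref{3}.

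For the lower bound $\tfrac12 uc_p^4(T)\le\chi_m^p(T^*B_{Y^*})$: I would start from any relatively weakly $p$-compact $L\subset B_X$ with $\chi_0(TL)$ close to $uc_p^4(T)$, extract (as in the proofs in Section 4) a sequence $(x_n)_n$ in $L$ with $\|Tx_n-Tx_m\|>c$ for $n\ne m$, and set $F=\{x_n:n\}\cup\{0\}$ — still relatively weakly $p$-compact and contained in $B_X$. For each pair $n\ne m$ pick $y^*\in B_{Y^*}$ with $\langle y^*,Tx_n-Tx_m\rangle>c$; then the restrictions $(T^*y^*)|_F$ form a subset of $\ell_\infty(F)$ that is $c$-separated along these pairs, so $\chi_0((T^*B_{Y^*})|_F)\ge c/2$ (the factor $\tfrac12$ coming from the usual separation-versus-net argument), whence $\chi_m^p(T^*B_{Y^*})\ge c/2$. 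Letting $c\uparrow uc_p^4(T)$ finishes it.

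The main obstacle is bookkeeping the constants so that the final inequalities come out as stated rather than with worse factors: one must be careful to use \eqref{3} in the right direction in each half, and to ensure that in the lower-bound argument the $c$-separated family in $\ell_\infty(F)$ really yields $\chi_0\ge c/2$ and not merely $\chi_0\ge c/4$ — this forces the finite approximating set to be chosen \emph{inside} $(T^*B_{Y^*})|_F$ (the $\chi_0$, not $\chi$, version), which is why the statement is phrased with $\chi_m^p$ built from $\chi_0$. Apart from that, the argument is a routine transcription of [19, Lemma 4.4] into the present setting, and the real work is already done by Theorems \ref{3.1}, \ref{3.2} and \ref{3.7} identifying $uc_p^4$ with genuine non-compactness of $T$ on relatively weakly $p$-compact sets.
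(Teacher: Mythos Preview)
Your proposal is correct and matches the paper's approach, which in fact gives no proof at all: the paper simply declares the result ``immediate from [19, Lemma~4.4]'' and moves on. You correctly identify this citation as the real content and correctly isolate the key reduction, namely that $(T^*B_{Y^*})|_F$ is isometric to $B_{Y^*}|_{TF}$ inside $\ell_\infty(TF)$, after which the theorem becomes the two-sided comparison between $\chi_0(K)$ and $\chi_0(B_{Y^*}|_K)$ for a bounded $K\subset Y$ that is exactly [19, Lemma~4.4].

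One small correction to your sketch: the lower-bound argument as written is muddled. Choosing for each pair $n\ne m$ a functional $y^*$ with $\langle y^*,Tx_n-Tx_m\rangle>c$ shows that $B_{Y^*}$ \emph{separates} the points $Tx_n$, not that the restrictions $(T^*y^*)|_F$ form a $c$-separated family, and the latter is what you would need to read off $\chi_0\ge c/2$ directly. The clean argument runs in the opposite direction: assume $\chi_0((T^*B_{Y^*})|_F)<r$, take a finite $r$-net $\{(T^*y_j^*)|_F\}_{j=1}^k$, and observe that for all $x,x'\in F$
\[
\|Tx-Tx'\|\le\max_{1\le j\le k}|\langle y_j^*,Tx-Tx'\rangle|+2r,
\]
so the map $x\mapsto(\langle y_j^*,Tx\rangle)_j$ into $\mathbb R^k$ embeds $TF$ up to error $2r$; total boundedness of the image in $\mathbb R^k$ then gives $\chi_0(TF)\le 2r$. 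Taking the supremum over $F$ yields $\tfrac12 uc_p^4(T)\le\chi_m^p(T^*B_{Y^*})$ with no detour through separated sequences. Since you already defer to [19, Lemma~4.4] for the details this does not affect the correctness of the proposal, only of the illustrative sketch.
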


Let $(x^{*}_{n})_{n}$ be a bounded sequence in $X^{*}$. We set
$$ca_{\mathcal{F}_{p}}((x^{*}_{n})_{n})=\sup_{F\in \mathcal{F}_{p},F\subset B_{X}}\inf_{n}\sup\{q_{F}(x^{*}_{k}-x^{*}_{l}):k,l\geq n\},$$ and
\begin{center}
$\widetilde{ca}_{\mathcal{F}_{p}}((x^{*}_{n})_{n})=\inf\{ca_{\mathcal{F}_{p}}((x^{*}_{k_{n}})_{n}):(x^{*}_{k_{n}})_{n}$ is a subsequence of $(x^{*}_{n})_{n}\}$.
\end{center}
The quantity $ca_{\mathcal{F}_{p}}$ measures how far the sequence $(x^{*}_{n})_{n}$ is from being $\rho^{*}_{p}$-Cauchy. In particular, $ca_{\mathcal{F}_{p}}((x^{*}_{n})_{n})=0$ if and only if the sequence $(x^{*}_{n})_{n}$ is $\rho^{*}_{p}$-Cauchy.

The following result contains two topological characterizations of $DPP_{p}$.

\begin{thm}\label{5.1}
The following are equivalent about a Banach space $X$ and $1<p<\infty$:
\item[(1)]$X$ has the $DPP_{p}$;
\item[(2)]Every weakly $p$-summable sequence in $X$ is $\rho$-null;
\item[(3)]Every weakly convergent sequence in $X^{*}$ is $\rho^{*}_{p}$-convergent.
\end{thm}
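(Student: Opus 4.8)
The plan is to prove the cycle $(1)\Rightarrow(2)\Rightarrow(3)\Rightarrow(1)$, translating each topological statement into the sequential language already available from Theorem~\ref{4.2} and the definition of the seminorms $q_F$.

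\emph{For $(1)\Rightarrow(2)$:} Let $(x_n)_n$ be weakly $p$-summable in $X$; I want to show $q_F(x_n)\to 0$ for every weakly compact $F\subset B_{X^*}$, i.e.\ $\sup_{x^*\in F}|\langle x^*,x_n\rangle|\to 0$. Suppose not; then after passing to a subsequence there are $\varepsilon_0>0$ and $x^*_n\in F$ with $|\langle x^*_n,x_n\rangle|>\varepsilon_0$. Since $F$ is relatively weakly compact, by the Eberlein--\v Smulian theorem a further subsequence of $(x^*_n)_n$ is weakly convergent, hence weakly Cauchy; but by Theorem~\ref{4.2}(1)$\Rightarrow$(5) (the $DPP_p$ hypothesis) $\langle x^*_n,x_n\rangle\to 0$, a contradiction. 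Thus every weakly $p$-summable sequence is $\rho$-null.

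\emph{For $(2)\Rightarrow(3)$:} Let $(x^*_n)_n$ converge weakly to $x^*$ in $X^*$; I must show $q_F(x^*_n-x^*)\to 0$ for every relatively weakly $p$-compact $F\subset X$, i.e.\ $\sup_{x\in F}|\langle x^*_n-x^*,x\rangle|\to 0$. Again argue by contradiction: pass to a subsequence and pick $x_n\in F$ with $|\langle x^*_n-x^*,x_n\rangle|>\varepsilon_0$. Write $F\subset S(B_{\ell_{p^*}})$ for some operator $S\colon \ell_{p^*}\to X$; choosing preimages $\xi_n\in B_{\ell_{p^*}}$ with $S\xi_n=x_n$ and using weak sequential compactness of $B_{\ell_{p^*}}$, we may assume $\xi_n\rightharpoonup \xi$, so $x_n\rightharpoonup S\xi=:x\in F$ and $(x_n-x)_n$ is weakly null. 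In fact one needs $(x_n-x)_n$ to be weakly $p$-summable so that hypothesis~(2) applies; this follows because $(\xi_n-\xi)_n$, being weakly null and bounded in $B_{\ell_{p^*}}$, is weakly $p$-summable in $\ell_{p^*}$ (the unit vector basis is weakly $p$-summable and the coordinate functionals give the required estimate — more carefully, any bounded weakly null sequence in $\ell_{p^*}$, $p^*<\infty$, lies in $\ell_{p^*}^w(\ell_{p^*})$ up to a constant, and $S$ is bounded), hence $(x_n-x)_n\in \ell_p^w(X)$. By~(2) the sequence $(x_n-x)_n$ is $\rho$-null; in particular, since $(x^*_n-x^*)_n$ is bounded and weakly null, $\langle x^*_n-x^*, x_n-x\rangle\to 0$ and $\langle x^*_n-x^*,x\rangle\to 0$, so $\langle x^*_n-x^*,x_n\rangle\to 0$, contradicting the choice of $x_n$.

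\emph{For $(3)\Rightarrow(1)$:} Use Theorem~\ref{4.2}(5)$\Rightarrow$(1). Let $(x_n)_n\in\ell_p^w(X)$ and $(x^*_n)_n$ be weakly Cauchy in $X^*$; I claim $\langle x^*_n,x_n\rangle\to 0$. The set $F=\{x_n:n\in\mathbb N\}$ is relatively weakly $p$-compact (it is the image of the unit vector basis under some $S\in\mathcal L(\ell_{p^*},X)$, hence contained in $S(B_{\ell_{p^*}})$ up to scaling). A weakly Cauchy sequence need not be weakly convergent, so to invoke~(3) I would first reduce to the weakly null case exactly as in the proof of $(4)\Rightarrow(5)$ in Theorem~\ref{4.2}: choosing a subsequence $(x_{k_n})_n$ with $|\langle x^*_n,x_{k_n}\rangle|$ small, the differences $(x^*_{k_n}-x^*_n)_n$ are weakly null, hence by~(3) they are $\rho^*_p$-null, so $q_F(x^*_{k_n}-x^*_n)\to 0$, which forces $\langle x^*_{k_n}-x^*_n,x_{k_n}\rangle\to 0$; combining with the triangle inequality as in Theorem~\ref{4.2} gives a contradiction to $|\langle x^*_n,x_n\rangle|>\varepsilon_0$. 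Hence $X$ has the $DPP_p$.

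\emph{Main obstacle.} The delicate point is the implication $(2)\Rightarrow(3)$: one must produce, from a weakly null (or weakly Cauchy) sequence inside a relatively weakly $p$-compact set $F$, an honest weakly $p$-summable sequence in $X$ on which hypothesis~(2) can be applied. This requires the observation that if $F\subset S(B_{\ell_{p^*}})$ and $x_n\in F$ with chosen preimages $\xi_n\in B_{\ell_{p^*}}$ converging weakly to $\xi$, then $(\xi_n-\xi)_n$ is weakly $p$-summable in $\ell_{p^*}$ (here $1<p<\infty$ is essential so that $p^*<\infty$), whence $(x_n-x)_n=(S(\xi_n-\xi))_n\in\ell_p^w(X)$. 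Once this normalization is in place, all three implications are routine $\varepsilon$-arguments modeled on the proof of Theorem~\ref{4.2}.
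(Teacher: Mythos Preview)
Your argument for $(2)\Rightarrow(3)$ contains a genuine gap. The claim that ``any bounded weakly null sequence in $\ell_{p^*}$ lies in $\ell_p^w(\ell_{p^*})$'' is false: for instance, with $p=2$ the normalized sequence $z_n=n^{-1/2}(e_1+\cdots+e_n)$ in $\ell_2$ is weakly null, yet $\sum_n|\langle e_1,z_n\rangle|^2=\sum_n 1/n=\infty$, so $(z_n)_n\notin\ell_2^w(\ell_2)$. Thus from $\xi_n\rightharpoonup\xi$ in $B_{\ell_{p^*}}$ you cannot directly conclude that $(\xi_n-\xi)_n$, and hence $(x_n-x)_n=(S(\xi_n-\xi))_n$, is weakly $p$-summable; so hypothesis~(2) is not yet applicable. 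The argument can be repaired: pass to a further subsequence so that either $\xi_{k_n}\to\xi$ in norm (and then $\langle x^*_{k_n}-x^*,x_{k_n}\rangle\to 0$ trivially), or, by the Bessaga--Pe{\l}czy\'nski Selection Principle, $(\xi_{k_n}-\xi)_n$ is equivalent to the unit vector basis of $\ell_{p^*}$, hence weakly $p$-summable, and your argument proceeds. You correctly flagged this step as the ``main obstacle'', but the resolution you offered is the wrong one.

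The paper sidesteps this issue by organizing the proof differently: it observes that $(1)\Leftrightarrow(2)$ is just a restatement of Theorem~\ref{4.5}, that $(3)\Rightarrow(1)$ follows from Theorem~\ref{4.2}(4), and then proves $(1)\Rightarrow(3)$ directly (rather than $(2)\Rightarrow(3)$). For a weakly null $(x^*_n)_n$ in $X^*$ it forms the weakly compact operator $T:X\to c_0$, $Tx=(\langle x^*_n,x\rangle)_n$; by the $DPP_p$ and Theorem~\ref{3.7}, $T(F)$ is relatively norm compact in $c_0$ for every relatively weakly $p$-compact $F$, and the standard description of compact subsets of $c_0$ gives $q_F(x^*_n)=\sup_{x\in F}|\langle x^*_n,x\rangle|\to 0$. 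This route never needs to manufacture a weakly $p$-summable sequence out of an arbitrary sequence in $F$, which is exactly the step that tripped you up.
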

\begin{proof}
The equivalence of (1) and (2) is essentially Theorem \ref{4.5}.
The implication $(3)\Rightarrow (1)$ follows from Theorem \ref{4.2}. It remains to prove $(1)\Rightarrow (3)$.

Let $(x^{*}_{n})_{n}$ be weakly null in $X^{*}$. Define an operator $T:X\rightarrow c_{0}$ by $$Tx=(<x^{*}_{n},x>)_{n}, \quad x\in X.$$
Since $(x^{*}_{n})_{n}$ is weakly null, $T$ is weakly compact. By (1), we get $T$ is unconditionally $p$-converging. Let $F\in \mathcal{F}_{p}$. It follows from Theorem \ref{3.7} that $TF$ is relatively norm compact in $c_{0}$. By the well-known characterization of relatively norm compact subsets of $c_{0}$, we get $$\lim_{n\rightarrow \infty}q_{F}(x^{*}_{n})=\lim_{n\rightarrow \infty}\sup_{x\in F}|<x^{*}_{n},x>|=0,$$
which implies that $(x^{*}_{n})_{n}$ is $\rho^{*}_{p}$-null.

\end{proof}

To quantify the $DPP_{p}$, we will need several measures of weak non-compactness.
Let $A$ be a bounded subset of a Banach space $X$.
The de Blasi measure of weak non-compactness of $A$ is defined by

\begin{center}
$\omega(A)=\inf\{\widehat{d}(A,K):\emptyset\neq K\subset X$ is weakly compact $\}.$
\end{center}

Then $\omega(A)=0$ if and only if $A$ is relatively weakly compact. It is easy to verify that
\begin{center}
$\omega(A)=\inf\{\epsilon>0:$ there exists a weakly compact subset $K$ of $X$ such that $A\subset K+\epsilon B_{X}\}$.
\end{center}
Other commonly used quantities measuring weak non-compactness are:
\begin{center}
$wk_{X}(A)=\widehat{d}(\overline{A}^{w^{*}},X),$ where $\overline{A}^{w^{*}}$ denotes the $weak^{*}$ closure of $A$ in $X^{**}$.
\end{center}

\begin{center}
$wck_{X}(A)=\sup\{d(clust_{X^{**}}((x_{n})_{n}),X):(x_{n})_{n}$ is a sequence in $A\}$, where $clust_{X^{**}}((x_{n})_{n})$ is the set of all $weak^{*}$ cluster points in $X^{**}$ of $(x_{n})_{n}$.
\end{center}

\begin{center}
$\gamma(A)=\sup\{|\lim_{n}\lim_{m}<x^{*}_{m},x_{n}>-\lim_{m}\lim_{n}<x^{*}_{m},x_{n}>|:(x_{n})_{n}$ is a sequence in $A$, $(x^{*}_{m})_{m}$ is a sequence in $B_{X^{*}}$ and all the involved limits exist$\}$.
\end{center}
It follows from [1,Theorem 2.3] that for any bounded subset $A$ of a Banach space $X$ we have
$$wck_{X}(A)\leq wk_{X}(A)\leq \gamma(A)\leq 2wck_{X}(A),$$
$$wk_{X}(A)\leq \omega(A).$$
For an operator $T$, $\omega(T),wk_{Y}(T),wck_{Y}(T),\gamma(T)$ denote $\omega(TB_{X}),wk_{Y}(TB_{X}),wck_{Y}(TB_{X})$ and $\gamma(TB_{X})$,respectively.
C. Angosto and B. Cascales(\cite{AC})proved the following inequality:
\begin{center}
$\gamma(T)\leq \gamma(T^{*})\leq 2\gamma(T)$, for any operator $T$.
\end{center}
So,putting these inequalities together, we get,for any operator $T$,
\begin{equation}\label{1}
{\frac{1}{2}}wk_{Y}(T)\leq wk_{X^{*}}(T^{*})\leq 4wk_{Y}(T).
\end{equation}

Let $X$ be a Banach space and $A$ be a bounded subset of $X^{*}$. For $1\leq p<\infty$, we set
$$\iota_{p}(A)=\sup\{\limsup_{n}\sup_{x^{*}\in A}|<x^{*},x_{n}>|:(x_{n})_{n}\in l^{w}_{p}(X),(x_{n})_{n}\subset B_{X}\},$$
$$\eta_{p}(A)=\sup\{\limsup_{n}\sup_{x^{*}\in A}|<x^{*},x_{n}>|:(x_{n})_{n}\in l^{w}_{p}(X),\|(x_{n})_{n}\|^{w}_{p}\leq 1\}.$$
These two quantities measure how far $A$ is weakly $p$-limited. Obviously, $\eta_{p}(A)=\iota_{p}(A)=0$ if and only if $A$ is weakly $p$-limited. The following theorem says, in particular, that weakly $p$-limited sets coincide with relatively $\rho^{*}_{p}$-compact sets. Its proof is similar to [19, Lemma 5.6].
\begin{thm}
Let $X$ be a Banach space, $1\leq p<\infty$ and $A$ be a bounded subset of $X^{*}$. Then
$$\frac{1}{8}\chi_{m}^{p}(A)\leq \iota_{p}(A)\leq \chi_{m}^{p}(A).$$
\end{thm}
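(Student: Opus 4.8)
The plan is to establish the two inequalities separately, mirroring the structure of the proof of the earlier result comparing $uc_p^4(T)$ with $\chi_m^p(T^*B_{Y^*})$, and to reduce everything to the elementary relations between $\chi$ and $\chi_0$ recorded in \eqref{3}. For the upper bound $\iota_p(A)\le\chi_m^p(A)$, I would fix $(x_n)_n\in l^w_p(X)$ with $(x_n)_n\subset B_X$ and let $F$ be the (norm-)closure of $\{x_n:n\in\mathbb N\}$; since $(x_n)_n$ is weakly $p$-summable, it is the image of the unit vector basis under an operator from $l_{p^*}$ into $X$, hence $F$ is relatively weakly $p$-compact and, after rescaling, contained in $B_X$. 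Then $\limsup_n\sup_{x^*\in A}|\langle x^*,x_n\rangle|$ is controlled by $\chi_0(A|_F)$: if $\widehat d(A|_F,G)<c$ for a finite $G\subset A|_F$, then some $x^*|_F\in G$ is $c$-close to $A|_F$ along a subsequence, and weak $p$-summability (in particular weak nullity) of $(x_n)_n$ forces $\sup_k|\langle x^*,x_k\rangle|$ evaluated along that subsequence to be $\le c$ in the limit, exactly as in Step 3 of the proof quantifying $uc_p^1$. Taking the supremum over all such $(x_n)_n$ and $F$ gives $\iota_p(A)\le\chi_m^p(A)$.

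For the lower bound $\tfrac18\chi_m^p(A)\le\iota_p(A)$, I would fix $0<c<\chi_m^p(A)$ and choose a relatively weakly $p$-compact $F\subset B_X$ with $\chi_0(A|_F)>c$. By \eqref{3} we then have $\chi(A|_F)>c/2$, so $A|_F$ is not contained in any $\tfrac{c}{2}$-net of $l_\infty(F)$; by a routine extraction this yields a sequence $(x^*_n)_n$ in $A$ with $\|x^*_n|_F-x^*_m|_F\|_{l_\infty(F)}>c/2$ for $n\ne m$, i.e.\ for each pair $n\ne m$ there is $z_{n,m}\in F$ with $|\langle x^*_n-x^*_m,z_{n,m}\rangle|>c/2$. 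Writing $F\subset S(B_{l_{p^*}})$ for some operator $S:l_{p^*}\to X$ with $\|S\|\le 1$ (up to a harmless constant, since $F\subset B_X$), pull the relevant elements back to $B_{l_{p^*}}$ and use a gliding-hump/Bessaga--Pe\l czy\'nski argument on the separations to produce a single weakly $p$-summable sequence $(y_k)_k\subset B_X$ witnessing $\limsup_k\sup_{x^*\in A}|\langle x^*,y_k\rangle|\ge c/8$; here the constants $\tfrac12$ (from \eqref{3}) together with the losses from the splitting into a difference and from the perturbation in the selection principle combine to give the factor $\tfrac18$. This is the part of the argument that parallels [19, Lemma 5.6] most closely, and I would simply cite that lemma for the combinatorial core once the setup is in place.

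The main obstacle is the lower bound: extracting from the mere separation of the $x^*_n|_F$ in $l_\infty(F)$ a \emph{single} weakly $p$-summable test sequence in $B_X$ (rather than, say, a weakly $p$-Cauchy one or a sequence depending on pairs of indices). The trick is to exploit that $F$ itself is a relatively weakly $p$-compact set, so it is the continuous image of $B_{l_{p^*}}$ under a norm-one operator; the witnesses $z_{n,m}$ therefore live in $S(B_{l_{p^*}})$, and passing to their preimages in $B_{l_{p^*}}$ lets one run a standard perturbation-and-blocking argument in $l_{p^*}$ to build blocks that map, under $S$ composed with the difference functionals, to a weakly $p$-summable sequence in $X$. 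Keeping track of the numerical constants through the two reductions ($\chi_0\le 2\chi$ and the difference $x^*_n-x^*_m$) is what produces the displayed $\tfrac18$, and I would organize the write-up as two clearly separated steps, deferring the blocking argument to the cited lemma. Since the statement explicitly says the proof is similar to [19, Lemma 5.6], a complete write-up need only indicate the modifications needed to accommodate weakly $p$-summable (rather than weakly null) sequences.
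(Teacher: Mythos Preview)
Your proposal is correct and follows exactly the route the paper indicates: the paper gives no argument beyond the sentence ``Its proof is similar to [19, Lemma 5.6]'', so your sketch---proving $\iota_p(A)\le\chi_m^p(A)$ by testing on the relatively weakly $p$-compact set $F=\{x_n\}$ and using weak nullity against a finite net, and proving the lower bound by extracting a separated sequence in $A|_F$, pulling back through $F\subset S(B_{l_{p^*}})$, and invoking the blocking argument of [19, Lemma 5.6]---is in fact more explicit than what the paper provides. One minor remark: in the lower bound you can extract the $c$-separated sequence directly from $\chi_0(A|_F)>c$ without passing through $\chi$, which would slightly sharpen the bookkeeping, but this does not affect the approach or the final constant.
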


In the following theorem, we quantify the $DPP_{p}$ by using the quantities $\omega(\cdot)$, $\iota_{p}(\cdot)$, $\widetilde{ca}_{\mathcal{F}_{p}}(\cdot)$ and $\chi_{m}^{p}(\cdot)$.

\begin{thm}
Let $X$ be a Banach space and $1<p<\infty$. The following are equivalent:
\item[(1)]$X$ has the $DPP_{p}$;
\item[(2)]$uc_{p}^{1}(T)\leq \omega(T^{*})$ for every operator $T$ from $X$ into any Banach space $Y$;
\item[(3)]$\iota_{p}(A)\leq \omega(A)$ for every bounded subset $A$ of $X^{*}$;
\item[(4)]$\widetilde{ca}_{\mathcal{F}_{p}}((x^{*}_{n})_{n})\leq 2\omega((x^{*}_{n})_{n})$ whenever $(x^{*}_{n})_{n}$ is a bounded sequence in $X^{*}$;
\item[(5)]$\chi_{m}^{p}(A)\leq 2\omega(A)$ for every bounded subset $A$ of $X^{*}$.
\end{thm}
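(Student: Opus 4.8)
The plan is to establish the cycle $(1)\Rightarrow(3)\Rightarrow(5)\Rightarrow(4)$, together with $(1)\Rightarrow(2)\Rightarrow(3)$, closing the loop by showing that any one of $(2)$--$(5)$ forces $X$ to have the $DPP_{p}$. The unifying idea is that each of these statements is a quantitative refinement of ``relatively weakly compact subsets of $X^{*}$ are weakly $p$-limited'', which by Theorem~\ref{4.5} is exactly the $DPP_{p}$; so in each implication the $\omega(\cdot)$ term on the right controls the distance to a weakly compact set, and we run the corresponding qualitative proof on that weakly compact piece while carrying the error term along.

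First I would prove $(1)\Rightarrow(3)$. Fix a bounded $A\subset X^{*}$ and let $\ep>\omega(A)$; pick a weakly compact $K\subset X^{*}$ with $A\subset K+\ep B_{X^{*}}$. By the Davis--Figiel--Johnson--Pe{\l}czy\'nski lemma, as in the proof of Theorem~\ref{4.5}, the weakly compact set $K$ is weakly $p$-limited (this uses $(1)$ via Theorem~\ref{4.5} and Theorem~\ref{3.10}). Now for any $(x_{n})_{n}\in l^{w}_{p}(X)$ with $(x_{n})_{n}\subset B_{X}$ and any $x^{*}\in A$, write $x^{*}=k^{*}+e^{*}$ with $k^{*}\in K$, $\|e^{*}\|\le\ep$, so $|\langle x^{*},x_{n}\rangle|\le\sup_{k^{*}\in K}|\langle k^{*},x_{n}\rangle|+\ep$; taking $\sup_{x^{*}\in A}$ and then $\limsup_{n}$ kills the first term because $K$ is weakly $p$-limited, leaving $\iota_{p}(A)\le\ep$. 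Letting $\ep\downarrow\omega(A)$ gives $(3)$. The implication $(1)\Rightarrow(2)$ is the same computation phrased for $T^{*}B_{Y^{*}}$, using $uc_{p}^{1}(T)=\iota_{p}(T^{*}B_{Y^{*}})$; and $(2)\Rightarrow(3)$ follows by the factorization trick of Theorem~\ref{4.5}: given $A$, embed it in $B_{Z}$ for a reflexive $Z\subset X^{*}$, realize $B_{Z}$ as $T^{*}B_{Y^{*}}$ for a suitable $T$, note $\omega(T^{*}B_{Y^{*}})=\omega(B_{Z})\ge\omega(A)$ up to constants, and read off $\iota_{p}(A)\le\iota_{p}(B_{Z})=uc_{p}^{1}(T)\le\omega(T^{*})$.

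Next, $(3)\Rightarrow(5)$ should follow from the estimate $\chi_{m}^{p}(A)\le 8\,\iota_{p}(A)$ of the theorem immediately preceding this one, which already gives $\chi_{m}^{p}(A)\le 8\omega(A)$; to get the sharper constant $2$ I would instead rerun the argument of that lemma (modelled on [19, Lemma 5.6]) directly against a weakly compact set witnessing $\omega(A)$, so that only the ``$\times 2$'' coming from $\chi\le\chi_{0}\le 2\chi$ in \eqref{3} survives. Then $(5)\Rightarrow(4)$ is the observation that $\widetilde{ca}_{\mathcal{F}_{p}}((x^{*}_{n})_{n})\le\chi_{m}^{p}(\{x^{*}_{n}:n\})$: a set whose restrictions to every $F\in\mathcal{F}_{p}$, $F\subset B_{X}$, are totally bounded in $\ell_{\infty}(F)$ up to $\chi_{m}^{p}$ admits, by a diagonal extraction, a subsequence that is $\rho^{*}_{p}$-Cauchy up to that same bound. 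Finally, for the return $(4)\Rightarrow(1)$ (and similarly $(2),(3),(5)\Rightarrow(1)$) I would argue contrapositively: if $X$ fails $DPP_{p}$ then by Theorem~\ref{4.5} there is a relatively weakly compact $K\subset X^{*}$ that is not weakly $p$-limited, so $\omega(K)=0$ while the relevant left-hand quantity is strictly positive; choosing $A=K$ or extracting from $K$ a sequence realizing a witnessing $(x_{n})_{n}\in l^{w}_{p}(X)$ contradicts the stated inequality.

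The main obstacle I expect is getting the precise constants $2$ in $(4)$ and $(5)$ rather than some larger universal constant. The qualitative proofs and the crude bounds $\chi_{m}^{p}(A)\le 8\iota_{p}(A)$ are already in hand, but to land exactly on $2\omega(\cdot)$ one must be careful that the only loss incurred is the single factor from \eqref{3} (for $(5)$) or from passing to a $\rho^{*}_{p}$-Cauchy subsequence and doubling the mesh (for $(4)$), and in particular that the DFJP factorization and the reduction to a witnessing weakly compact set do not secretly introduce extra factors. A secondary point requiring care is the diagonal argument in $(5)\Rightarrow(4)$: one must extract a single subsequence that works simultaneously for all $F\in\mathcal{F}_{p}$, which is legitimate because $\mathcal{F}_{p}$-compactness is tested on a fixed bounded set $B_{X}$ and the space $(X^{*},\rho^{*}_{p})$ is complete (Grothendieck's Completeness Theorem, as already invoked), so totally bounded plus complete gives the desired Cauchy subsequence.
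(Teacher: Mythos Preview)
Your overall strategy---reduce everything to ``weakly compact sets in $X^{*}$ are weakly $p$-limited'' and carry an $\omega(\cdot)$ error term---matches the paper's, and your arguments for $(1)\Rightarrow(3)$, $(1)\Rightarrow(2)$ (via $uc_{p}^{1}(T)=\iota_{p}(T^{*}B_{Y^{*}})$), $(3)\Rightarrow(5)$ with the sharp constant, and the contrapositive returns to $(1)$ are all fine. Two steps, however, do not work as written.

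First, your $(2)\Rightarrow(3)$ via DFJP is incorrect. You cannot embed an arbitrary bounded $A\subset X^{*}$ into the unit ball of a reflexive subspace $Z$; DFJP only does this for relatively weakly compact sets. Worse, if such a $Z$ existed then $\omega(B_{Z})=0$, so your inequality ``$\omega(B_{Z})\ge\omega(A)$ up to constants'' would force $\omega(A)=0$. This is not fatal to the global scheme, since $(2)\Rightarrow(1)$ is immediate (apply $(2)$ to a weakly compact $T$) and you already have $(1)\Rightarrow(3)$; but the direct route you sketched should be dropped.

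Second, and more seriously, your $(5)\Rightarrow(4)$ step has a real gap. The claimed inequality $\widetilde{ca}_{\mathcal{F}_{p}}((x^{*}_{n})_{n})\le\chi_{m}^{p}(\{x^{*}_{n}\})$ would require extracting a single subsequence that is simultaneously $q_{F}$-Cauchy (up to the bound) for \emph{every} $F\in\mathcal{F}_{p}$. Since $\mathcal{F}_{p}$ is uncountable, a diagonal argument does not apply, and completeness of $(X^{*},\rho^{*}_{p})$ is irrelevant here: completeness lets you pass from Cauchy to convergent, not from ``totally bounded in each seminorm'' to ``has a Cauchy subsequence''. The paper avoids this entirely by proving $(1)\Rightarrow(4)$ directly: given $\epsilon>\omega((x^{*}_{n})_{n})$, write $x^{*}_{n}=z^{*}_{n}+e^{*}_{n}$ with $z^{*}_{n}$ in a weakly compact $K$ and $\|e^{*}_{n}\|\le\epsilon$; by Eberlein--\v{S}mulian extract a weakly convergent subsequence $(z^{*}_{k_{n}})_{n}$, and then invoke Theorem~\ref{5.1} (which uses the $DPP_{p}$) to upgrade weak convergence to $\rho^{*}_{p}$-convergence, giving $ca_{\mathcal{F}_{p}}((x^{*}_{k_{n}})_{n})\le 2\epsilon$. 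That is the step you are missing: you need $(1)$ (equivalently, Theorem~\ref{5.1}) to produce the subsequence, not a purely metric extraction from $\chi_{m}^{p}$. Once you route through $(5)\Rightarrow(1)\Rightarrow(4)$ in this way, your proof closes with the correct constants.
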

\begin{proof}
$(2)\Rightarrow (1)$ is obvious. $(3)\Rightarrow (1)$ and $(5)\Rightarrow (1)$ follow from Theorem \ref{4.5}.

$(1)\Rightarrow (2)$. Let $Y$ be a Banach space and let $T\in \mathcal{L}(X,Y)$. Let $\epsilon>0$ be such that $T^{*}B_{Y^{*}}\subset K+\epsilon B_{X^{*}}, K\subset X^{*}$ is weakly compact. Let $(x_{n})_{n}\in l^{w}_{p}(X)$ and $(x_{n})_{n}\subset B_{X}$. Since $X$ has the $DPP_{p}$, it follows from Theorem \ref{4.5} that
$\lim_{n\rightarrow \infty}\sup_{x^{*}\in K}|<x^{*},x_{n}>|=0$.
Let $c>0$. Then there exists a positive integer $N$ such that $\sup_{x^{*}\in K}|<x^{*},x_{n}>|<c$ for each $n\geq N$.
For each $n\in \mathbb{N}$, pick $y^{*}_{n}\in B_{Y^{*}}$ with $\|Tx_{n}\|=<y^{*}_{n},Tx_{n}>$. Since $T^{*}B_{Y^{*}}\subset K+\epsilon B_{X^{*}}$, then, for each $n\in \mathbb{N}$, there exists $x^{*}_{n}\in K$ such that $\|T^{*}y^{*}_{n}-x^{*}_{n}\|\leq \epsilon$.
Then, for $n\geq N$, we get
\begin{align*}
\|Tx_{n}\|&=<T^{*}y^{*}_{n},x_{n}>\\
&\leq \epsilon+|<x^{*}_{n},x_{n}>|\\
&\leq \epsilon+\sup_{x^{*}\in K}|<x^{*},x_{n}>|\\
&\leq \epsilon+c.
\end{align*}
This yields $\limsup_{n}\|Tx_{n}\|\leq \epsilon+c.$ Since $c>0$ is arbitrary, we obtain $\limsup_{n}\|Tx_{n}\|\leq \epsilon$
and hence $uc_{p}^{1}(T)\leq \epsilon$. This proves $uc_{p}^{1}(T)\leq \omega(T^{*})$.

$(1)\Rightarrow (3)$. Let $(x_{n})_{n}$ be a weakly $p$-summable sequence in $B_{X}$. Let $\epsilon>0$ be such that $A\subset K+\epsilon B_{X^{*}}, K\subset X^{*}$ is weakly compact.
For each $x^{*}\in A$, there exists $z^{*}\in K$ such that $\|x^{*}-z^{*}\|\leq \epsilon$. This yields
$$|<x^{*},x_{n}>|\leq \epsilon+\sup_{x^{*}\in K}|<x^{*},x_{n}>| \quad(n=1,2,...).$$
Since $X$ has the $DPP_{p}$, it follows from Theorem \ref{4.5} that
$\lim_{n\rightarrow \infty}\sup_{x^{*}\in K}|<x^{*},x_{n}>|=0$.
Thus we get $\limsup_{n}\sup_{x^{*}\in A}|<x^{*},x_{n}>|\leq \epsilon$, which completes the proof $(1)\Rightarrow (3)$.

$(1)\Rightarrow (4)$. Let $(x^{*}_{n})_{n}$ be a bounded sequence in $X^{*}$. Let $\epsilon>0$ be such that $(x^{*}_{n})_{n}\subset K+\epsilon B_{X^{*}}, K\subset X^{*}$ is weakly compact. For each $x^{*}_{n}$, there exists $z^{*}_{n}\in K$ such that $\|x^{*}_{n}-z^{*}_{n}\|\leq \epsilon$. Since $K$ is weakly compact, there exists a weakly convergent subsequence $(z^{*}_{k_{n}})_{n}$ of $(z^{*}_{n})_{n}$. By Theorem \ref{5.1}, we see that the sequence
$(z^{*}_{k_{n}})_{n}$ is $\rho^{*}_{p}$-convergent and hence $ca_{\mathcal{F}_{p}}((z^{*}_{k_{n}})_{n})=0$. Note that for any
$F\in \mathcal{F}_{p},F\subset B_{X}$, we have
\begin{align*}
q_{F}(x^{*}_{k_{i}}-x^{*}_{k_{j}})&\leq q_{F}(x^{*}_{k_{i}}-z^{*}_{k_{i}})+q_{F}(z^{*}_{k_{i}}-z^{*}_{k_{j}})+q_{F}(z^{*}_{k_{j}}-x^{*}_{k_{j}})\\
&\leq 2\epsilon+q_{F}(z^{*}_{k_{i}}-z^{*}_{k_{j}}),i,j=1,2,...\\
\end{align*}
This yields
$$ca_{\mathcal{F}_{p}}((x^{*}_{k_{n}})_{n})\leq 2\epsilon+ca_{\mathcal{F}_{p}}((z^{*}_{k_{n}})_{n})=2\epsilon.$$
Hence, we get $\widetilde{ca}_{\mathcal{F}_{p}}((x^{*}_{n})_{n})\leq 2\epsilon$ and then $\widetilde{ca}_{\mathcal{F}_{p}}((x^{*}_{n})_{n})\leq 2\omega((x^{*}_{n})_{n})$.

$(4)\Rightarrow (1)$. Let $(x_{n})_{n}\in l^{w}_{p}(X)$ and let $(x^{*}_{n})_{n}$ be weakly null in $X^{*}$.
By (4), we get $\widetilde{ca}_{\mathcal{F}_{p}}((x^{*}_{n})_{n})=0.$ A classical diagonal argument yields a subsequence
$(x^{*}_{k_{n}})_{n}$ of $(x^{*}_{n})_{n}$ which is $\rho^{*}_{p}$-Cauchy. By the completeness of the topology $\rho^{*}_{p}$, we see that the subsequence $(x^{*}_{k_{n}})_{n}$ is $\rho^{*}_{p}$-convergent. Since $(x^{*}_{n})_{n}$ is weakly null,
$(x^{*}_{k_{n}})_{n}$ is $\rho^{*}_{p}$-null. Since $(x_{n})_{n}$ is weakly $p$-summable, one has
$$|<x^{*}_{k_{n}},x_{k_{n}}>|\leq \sup_{i}|<x^{*}_{k_{n}},x_{i}>|\rightarrow 0 \quad(n\rightarrow \infty).$$
Then Theorem \ref{4.2} gives (1).

$(1)\Rightarrow (5)$. Let $c>\omega(A)$. Then there exists a weakly compact subset $K$ of $X^{*}$ such that $\widehat{d}(A,K)<c$. Since $X$ has the $DPP_{p}$, it follows from Theorem \ref{4.5} that $\chi_{m}^{p}(K)=0$. Let $\epsilon>0$ and $L\in\mathcal{F}_{p},L\subset B_{X}$. Then there exists a finite subset $F\subset K$ such that $\widehat{d}(K|_{L},F|_{L})<\epsilon$, so $\chi(A|_{L})\leq c+\epsilon$. Since $\epsilon>0$ is arbitrary, we get $\chi(A|_{L})\leq c$. By (\ref{3}), we get $\chi_{0}(A|_{L})\leq 2c$. This implies that $\chi_{m}^{p}(A)\leq 2c,$ which completes the proof.

\end{proof}

The following quantitative version obviously strengthens the Dunford-Pettis property of order $p$.
\begin{thm}\label{5.3}
Let $X$ be a Banach space and $1<p<\infty$. The following are equivalent:
\item[(1)]There is $C>0$ such that $uc_{p}^{6}(T)\leq C\cdot wk_{X^{*}}(T^{*})$ for every operator $T$ from $X$ into any Banach space $Y$;
\item[(2)]There is $C>0$ such that $uc_{p}^{6}(T)\leq C\cdot wk_{X^{*}}(T^{*})$ for every operator $T$ from $X$ into $l_{\infty}$;
\item[(3)]There is $C>0$ such that $\eta_{p}(A)\leq C\cdot wk_{X^{*}}(A)$ for each bounded subset $A$ of $X^{*}$;
\item[(4)]There is $C>0$ such that $uc_{p}^{6}(T)\leq C\cdot wk_{Y}(T)$ for every operator $T$ from $X$ into any Banach space $Y$;
\item[(5)]There is $C>0$ such that $uc_{p}^{6}(T)\leq C\cdot wk_{l_{\infty}}(T)$ for every operator $T$ from $X$ into $l_{\infty}$.
\end{thm}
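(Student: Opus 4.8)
The plan is to run the cycle $(3)\Rightarrow(1)\Rightarrow(2)\Rightarrow(3)$ and then to peel off $(4)$ and $(5)$ using the two-sided estimate $\frac{1}{2}wk_{Y}(T)\le wk_{X^{*}}(T^{*})\le 4\,wk_{Y}(T)$ from $(\ref{1})$. Everything rests on the elementary identity
$$uc_{p}^{6}(T)=\eta_{p}\bigl(T^{*}B_{Y^{*}}\bigr)\qquad(T\in\mathcal{L}(X,Y)),$$
which holds because $\|Tx_{n}\|=\sup_{y^{*}\in B_{Y^{*}}}|\langle T^{*}y^{*},x_{n}\rangle|=\sup_{x^{*}\in T^{*}B_{Y^{*}}}|\langle x^{*},x_{n}\rangle|$, so the suprema defining the two quantities literally coincide; since also $wk_{X^{*}}(T^{*})=wk_{X^{*}}(T^{*}B_{Y^{*}})$, the implication $(3)\Rightarrow(1)$ is immediate (apply $(3)$ to $A=T^{*}B_{Y^{*}}$), and $(1)\Rightarrow(2)$ is the trivial restriction to $Y=l_{\infty}$.

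The real content is $(2)\Rightarrow(3)$. First I would reduce to countable sets: given a bounded $A\subset X^{*}$, for each $(x_{n})_{n}\in l^{w}_{p}(X)$ with $\|(x_{n})_{n}\|^{w}_{p}\le1$ that nearly realizes $\eta_{p}(A)$ one picks $a_{n}^{*}\in A$ with $|\langle a_{n}^{*},x_{n}\rangle|$ within $1/k$ of $\sup_{x^{*}\in A}|\langle x^{*},x_{n}\rangle|$; the countable set $A_{0}\subset A$ of all functionals so obtained (over a sequence of witnessing sequences whose values tend to $\eta_{p}(A)$) satisfies $\eta_{p}(A_{0})=\eta_{p}(A)$, while $wk_{X^{*}}(A_{0})\le wk_{X^{*}}(A)$ by monotonicity, so one may assume $A=\{x_{m}^{*}\}_{m}$. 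For such $A$ set $T_{A}\colon X\to l_{\infty}$, $T_{A}x=(\langle x_{m}^{*},x\rangle)_{m}$. Then for every $x\in X$ one has $\sup_{x^{*}\in T_{A}^{*}B_{l_{\infty}^{*}}}|\langle x^{*},x\rangle|=\|T_{A}x\|_{\infty}=\sup_{x^{*}\in A}|\langle x^{*},x\rangle|$, whence $uc_{p}^{6}(T_{A})=\eta_{p}(A)$ by the identity above; applying $(2)$ then reduces the implication to the single estimate $wk_{X^{*}}(T_{A}^{*}B_{l_{\infty}^{*}})\le C_{K}\,wk_{X^{*}}(A)$ for an absolute constant $C_{K}$.

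That estimate is the heart of the matter. Since $l_{1}$ is weak$^{*}$-dense in $B_{l_{\infty}^{*}}$ (Goldstine's theorem, viewing $l_{\infty}=(l_{1})^{*}$) and $T_{A}^{*}$ is weak$^{*}$-to-weak$^{*}$ continuous, one gets $T_{A}^{*}B_{l_{\infty}^{*}}\subseteq\overline{\mathrm{aco}}^{\,w^{*}}(A)$, the weak$^{*}$-closed absolutely convex hull of $A$ in $X^{*}$; then, invoking the quantitative version of Krein's theorem --- in the form that $\gamma$ is, up to an absolute constant, unchanged when one passes to the closed absolutely convex hull, see \cite{AC} --- together with $wk_{X^{*}}\le\gamma\le 2\,wck_{X^{*}}\le 2\,wk_{X^{*}}$, one obtains $wk_{X^{*}}(T_{A}^{*}B_{l_{\infty}^{*}})\le wk_{X^{*}}(\overline{\mathrm{aco}}^{\,w^{*}}(A))\le 4\,wk_{X^{*}}(A)$. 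This quantitative Krein step is where I expect the only genuine difficulty to lie; the rest is bookkeeping. Finally, $(1)\Leftrightarrow(4)$ follows by inserting $wk_{X^{*}}(T^{*})\le 4\,wk_{Y}(T)$ and $wk_{Y}(T)\le 2\,wk_{X^{*}}(T^{*})$ from $(\ref{1})$ into the respective inequalities, and $(2)\Leftrightarrow(5)$ is the same substitution with $Y=l_{\infty}$, which closes the chain.
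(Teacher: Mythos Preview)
Your cycle $(3)\Rightarrow(1)\Rightarrow(2)\Rightarrow(3)$ together with the Gantmacher estimate (\ref{1}) for $(4)$ and $(5)$ is exactly the paper's route, and the identity $uc_{p}^{6}(T)=\eta_{p}(T^{*}B_{Y^{*}})$ you isolate makes $(3)\Rightarrow(1)$ slightly slicker than the paper's version (which only applies $(3)$ to a countable set $A=(T^{*}y^{*}_{n})_{n}$ of norming functionals).

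The gap is precisely where you suspected, in $(2)\Rightarrow(3)$. The quantitative Krein theorem in \cite{AC} (and in \cite{FHSZ}) controls $\gamma$ --- hence $wk$ --- of the \emph{norm}-closed absolutely convex hull, which for convex sets coincides with the $\sigma(X^{*},X^{**})$-closed hull. What you need is the estimate for the $\sigma(X^{*},X)$-closed hull, and since $\sigma(X^{*},X)$ is strictly coarser than $\sigma(X^{*},X^{**})$ whenever $X$ is non-reflexive, $\overline{\mathrm{aco}}^{\,w^{*}}(A)$ can be strictly larger than $\overline{\mathrm{aco}}(A)$. A point $b\in\overline{\mathrm{aco}}^{\,w^{*}}(A)$ is only a $\sigma(X^{*},X)$-limit of a net in $\mathrm{aco}(A)$, and the double-limit test defining $\gamma$ pairs against $B_{X^{**}}$, not just $B_{X}$; so one cannot simply pull the $\gamma$-estimate through. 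Your citation to \cite{AC} does not cover this, and I do not believe the inequality $wk_{X^{*}}(\overline{\mathrm{aco}}^{\,w^{*}}(A))\le C\,wk_{X^{*}}(A)$ is available off the shelf.

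The paper sidesteps exactly this issue. Rather than bounding $wk_{X^{*}}(T_{A}^{*}B_{l_{\infty}^{*}})$ via the containment in a weak$^{*}$-closed hull, it writes $T_{A}=S^{*}J_{X}$ with $S\colon l_{1}\to X^{*}$, $S((\alpha_{m})_{m})=\sum_{m}\alpha_{m}x^{*}_{m}$, and runs the chain
\[
wk_{X^{*}}(T_{A}^{*})\ \le\ 4\,wk_{l_{\infty}}(T_{A})\ \le\ 4\,wk_{l_{\infty}}(S^{*})\ \le\ 16\,wk_{X^{*}}(S)\ \le\ 32\,wk_{X^{*}}(A),
\]
using (\ref{1}) twice and, only at the very last step, the standard quantitative Krein for the \emph{norm}-closed hull $S(B_{l_{1}})\subset\overline{\mathrm{aco}}\{x_{m}^{*}\}$ (this is the ``as in the proof of Theorem~5.4 in \cite{KKS}'' reference). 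In effect the double application of (\ref{1}) is what replaces your weak$^{*}$-Krein step; that is why the paper pays the larger constant $32C$ instead of the $4C$ your shortcut would give. If you want to keep your more direct line, you would have to prove the weak$^{*}$-hull estimate separately; otherwise, insert the $S$-factorisation and the argument goes through.
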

\begin{proof}
The implication $(1)\Rightarrow (2)$ is trivial with the same constant.

$(2)\Rightarrow (3)$. Assume that there is $C>0$ such that $uc_{p}^{6}(T)\leq C\cdot wk_{X^{*}}(T^{*})$ for every operator $T$ from $X$ into $l_{\infty}$. We'll show that (3) holds with the constant $32C$. Let $A$ be a bounded subset of $X^{*}$. We may assume that $\eta_{p}(A)>0$. Let us fix any $0<\epsilon<\eta_{p}(A)$. By the definition of $\eta_{p}(A)$, there exist a sequence $(x^{*}_{n})_{n}$ in $A$ and a sequence $(x_{n})_{n}$ in $l^{w}_{p}(X)$ with $\|(x_{n})_{n}\|^{w}_{p}\leq 1$ such that $|<x^{*}_{n},x_{n}>|>\epsilon$ for each $n\in \mathbb{N}$. Let us define an operator $S:l_{1}\rightarrow X^{*}$ by $$S((\alpha_{n})_{n})=\sum_{n}\alpha_{n}x^{*}_{n}, \quad (\alpha_{n})_{n}\in l_{1}.$$ As in the proof of Theorem 5.4 in \cite{KKS}, the set $S(B_{l_{1}})$ is contained in the closed absolutely convex hull of $(x^{*}_{n})_{n}$ and so $wk_{X^{*}}(S)\leq 2wk_{X^{*}}((x^{*}_{n})_{n}).$
Let $T=S^{*}J_{X}:X\rightarrow l_{\infty}$. By (2) and (\ref{1}), we get $uc_{p}^{6}(T)\leq C\cdot wk_{X^{*}}(T^{*})$. Thus
\begin{align*}
\epsilon &\leq \limsup_{n}|<x^{*}_{n},x_{n}>| \leq \limsup_{n}\|Tx_{n}\|\\
&\leq uc_{p}^{6}(T)\leq C\cdot wk_{X^{*}}(T^{*})\\
&\leq 4C\cdot wk_{l_{\infty}}(T)\leq 4C\cdot wk_{l_{\infty}}(S^{*})\\
&\leq 16C\cdot wk_{X^{*}}(S)\leq 32C\cdot wk_{X^{*}}((x^{*}_{n})_{n})\\
&\leq 32C\cdot wk_{X^{*}}(A)\\
\end{align*}
Since $\epsilon<\eta_{p}(A)$ is arbitrary, we get the assertion (3).

$(3)\Rightarrow (1)$. Let us suppose that (3) holds with a constant $C>0$.
Let $T\in \mathcal{L}(X,Y)$.
Let $(x_{n})_{n}\in l^{w}_{p}(X)$ with $\|(x_{n})_{n}\|^{w}_{p}\leq 1$. For each $n\in \mathbb{N}$, pick $y^{*}_{n}\in B_{Y^{*}}$ so that $\|Tx_{n}\|=<y^{*}_{n},Tx_{n}>$.
Applying (3) to $A=(T^{*}y^{*}_{n})_{n}$, we get
\begin{align*}
\limsup_{n}\|Tx_{n}\|&=\limsup_{n}|<T^{*}y^{*}_{n},x_{n}>|\\
&\leq \limsup_{n}\sup_{x^{*}\in A}|<x^{*},x_{n}>|\leq \eta_{p}(A)\\
&\leq C\cdot wk_{X^{*}}(A)\leq C\cdot wk_{X^{*}}(T^{*}),\\
\end{align*}
which yields $uc_{p}^{6}(T)\leq C\cdot wk_{X^{*}}(T^{*})$.

Finally, the equivalences of $(1)\Leftrightarrow (4)$ and $(2)\Leftrightarrow (5)$ follow from estimate (\ref{1}).

\end{proof}
It should be mentioned that the assertion (3) of Theorem \ref{5.3} is a quantitative version of Theorem \ref{4.5}.
\begin{defn}
We say that a Banach space $X$ has the \textit{quantitative Dunford-Pettis property of order $p$} if $X$ satisfies the equivalent conditions of Theorem \ref{5.3}.
\end{defn}
The following Theorem \ref{5.5} is a quantitative version of Corollary \ref{4.3}. To prove it, we need a simple lemma.
\begin{lem}\label{5.4}
Let $X$ be a closed subspace of a Banach space $Y$ and let $A$ be a bounded subset of $X$. Then
\begin{equation}\label{2}
wk_{Y}(A)\leq wk_{X}(A)\leq 2wk_{Y}(A).
\end{equation}
\end{lem}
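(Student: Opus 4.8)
The plan is to transport everything into the larger bidual $Y^{**}$ via the canonical isometric embedding $j\colon X^{**}\hookrightarrow Y^{**}$, and there to compare, pointwise, the distance to $X$ with the distance to $Y$.

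First I would record two standard facts about $j$: it is continuous for the respective $weak^{*}$ topologies (each $y^{*}\in Y^{*}$ restricts to an element of $X^{*}$), and $j(X^{**})\subset X^{\perp\perp}$, i.e.\ every element of $j(X^{**})$ annihilates the annihilator $X^{\perp}\subset Y^{*}$ of $X$. Writing $K:=\overline{A}^{w^{*}}$ for the $weak^{*}$ closure taken in $X^{**}$, which is $weak^{*}$-compact, continuity of $j$ gives $j(K)\subset\overline{A}^{w^{*}}$ (closure now in $Y^{**}$), while conversely $j(K)$ is $weak^{*}$-compact, hence $weak^{*}$-closed in $Y^{**}$, and contains $A$; so in fact $j(K)=\overline{A}^{w^{*}}$ in $Y^{**}$. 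Since $j$ is an isometry and $X\subset X^{**}$, we have $d(z,X)=d(j(z),X)$ for $z\in K$, and therefore
\begin{equation*} wk_{X}(A)=\sup_{z\in K}d(j(z),X),\qquad wk_{Y}(A)=\sup_{z\in K}d(j(z),Y), \end{equation*}
all distances being computed in $Y^{**}$. In this form the left inequality of \eqref{2} is immediate, since $X\subset Y$ forces $d(j(z),Y)\le d(j(z),X)$ for every $z\in K$.

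For the right inequality it suffices to prove the pointwise estimate $d(w,X)\le 2\,d(w,Y)$ for every $w\in X^{\perp\perp}$ and then take the supremum over $w=j(z)$, $z\in K$. Fix such a $w$, set $r:=d(w,Y)$, let $\varepsilon>0$, and choose $y\in Y$ with $\|w-y\|\le r+\varepsilon$. The crucial observation is that $y$ is then automatically close to $X$: by the Hahn--Banach duality formula $d(y,X)=\sup\{\,|\langle\phi,y\rangle|:\phi\in X^{\perp},\ \|\phi\|\le1\,\}$, and since $\langle\phi,w\rangle=0$ for $\phi\in X^{\perp}$, we get $|\langle\phi,y\rangle|=|\langle\phi,y-w\rangle|\le r+\varepsilon$ for all such $\phi$, whence $d(y,X)\le r+\varepsilon$. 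Choosing $x\in X$ with $\|y-x\|\le r+2\varepsilon$ and applying the triangle inequality gives $\|w-x\|\le 2r+3\varepsilon$, so $d(w,X)\le 2r+3\varepsilon$; letting $\varepsilon\to0$ proves the estimate, and hence $wk_{X}(A)\le 2\,wk_{Y}(A)$.

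I expect the only real subtlety to be the bookkeeping around the two biduals: checking that the $weak^{*}$ closure of $A$ computed in $X^{**}$ maps under $j$ exactly onto the one computed in $Y^{**}$, and that $j(X^{**})$ sits inside $X^{\perp\perp}$ (so that the elements produced by the closure still annihilate $X^{\perp}$, which is what makes the factor-$2$ argument work). Once these structural points are settled, the distance manipulations are elementary.
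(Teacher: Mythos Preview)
Your proof is correct and follows essentially the same route as the paper: both identify $X^{**}$ with $X^{\perp\perp}\subset Y^{**}$, observe that the two weak$^{*}$ closures of $A$ coincide under this identification (you spell this out more carefully via the map $j$), and for the factor-$2$ inequality both use the Hahn--Banach distance formula $d(y,X)=\sup\{|\langle\phi,y\rangle|:\phi\in X^{\perp},\ \|\phi\|\le1\}$ together with the fact that the closure point annihilates $X^{\perp}$. The paper picks a single norming functional $y^{*}\in X^{\perp}$ attaining $d(y,X)$ rather than taking the supremum as you do, but this is cosmetic; the argument is the same.
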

\begin{proof}
We can identify $X^{**}$ with $X^{\bot\bot}\subset Y^{**}$. Under this identification, the $weak^{*}$ closure of $A$ in $X^{**}$ is equal to the $weak^{*}$ closure of $A$ in $Y^{**}$. This yields the left inequality immediately. To prove the right inequality of (\ref{2}), let us fix any $c>wk_{Y}(A)$. Take any $y^{**}\in \overline{A}^{w^{*}}$. Then there exists $y\in Y$ such that $\|y^{**}-y\|\leq c$. Choose $y^{*}\in X^{\bot}$ with $\|y^{*}\|=1$ so that $d(y,X)=|<y^{*},y>|$. Then we get
$$d(y^{**},X)\leq \|y^{**}-y\|+d(y,X)\leq c+|<y^{*},y>|=c+|<y^{*},y^{**}-y>|\leq 2c.$$
Thus $wk_{X}(A)\leq 2c$. By the arbitrariness of $c>wk_{Y}(A)$, we obtain $wk_{X}(A)\leq 2wk_{Y}(A)$.

\end{proof}

\begin{thm}\label{5.5}
If $X^{**}$ has the quantitative Dunford-Pettis property of order $p$, then so is $X$. More precisely,
\item[(a)]If $X^{**}$ satisfies one of the conditions (1),(2),(4) and (5) of Theorem \ref{5.3} with a given constant $C$, then $X$ satisfies the respective condition of Theorem \ref{5.3} with $16C$;
\item[(b)]If $X^{**}$ satisfies the condition (3) of Theorem \ref{5.3} with a given constant $C$, then $X$ satisfies the respective condition (3) of Theorem \ref{5.3} with $C$.
\end{thm}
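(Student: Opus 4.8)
The plan is to handle each of the five conditions of Theorem \ref{5.3} by comparing the relevant operator (or set) on $X$ with a canonical lift to $X^{**}$, and then to control the measures of weak non-compactness of the lift by those of the original object using only the biduality estimate (\ref{1}), the monotonicity of $wk$, and the $1$-injectivity of $l_{\infty}$. Part (b) is the cheap one and comes out with the same constant; the four conditions in part (a) cost a factor $16$ coming from two iterated uses of (\ref{1}).

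First the basic observation. If $T\in\mathcal{L}(X,Y)$ and $\widetilde T\in\mathcal{L}(X^{**},Z)$ satisfies $\widetilde T J_{X}=VT$ for some isometric embedding $V\colon Y\to Z$, then $uc_{p}^{6}(T)\le uc_{p}^{6}(\widetilde T)$: indeed, for $(x_{n})_{n}\in l^{w}_{p}(X)$ with $\|(x_{n})_{n}\|^{w}_{p}\le 1$ the sequence $(J_{X}x_{n})_{n}$ lies in $l^{w}_{p}(X^{**})$ with the same weak $p$-norm (every $\phi\in B_{X^{*}}$ is the restriction to $J_{X}(X)$ of $J_{X^{*}}\phi\in B_{X^{***}}$), while $\|\widetilde T J_{X}x_{n}\|=\|VTx_{n}\|=\|Tx_{n}\|$. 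I would apply this with $\widetilde T=T^{**}$ and $V=J_{Y}$ for conditions (1) and (4); for conditions (2) and (5), where the codomain must remain $l_{\infty}$, I instead take $\widetilde T=PT^{**}$, where $P\colon l_{\infty}^{**}\to l_{\infty}$ is a norm-one projection (it exists because $l_{\infty}$ is $1$-injective), so that $PT^{**}J_{X}=PJ_{l_{\infty}}T=T$. The same embedding argument shows $\eta_{p}(A)\le\eta_{p}(J_{X^{*}}A)$ for every bounded $A\subset X^{*}$, which is what part (b) needs.

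Next I record the biduality estimates for $wk$. Applying (\ref{1}) once to the given operator and once to its adjoint, and using that $T^{***}=(T^{**})^{*}=((T^{*})^{*})^{*}$ together with the monotonicity of $wk$ with respect to set inclusion and the composition inequality $wk_{Z}(VU)\le\|V\|\,wk_{W}(U)$ (which follows at once from weak$^{*}$--weak$^{*}$ continuity of $V^{**}$), one obtains
\begin{align*}
wk_{Y^{**}}(T^{**})&\le 4\,wk_{X^{*}}(T^{*})\le 16\,wk_{Y}(T),\\
wk_{X^{***}}(T^{***})&\le 4\,wk_{Y^{**}}(T^{**})\le 16\,wk_{X^{*}}(T^{*}).
\end{align*}
Now if $X^{**}$ satisfies (1) with constant $C$, then for $T\in\mathcal{L}(X,Y)$,
$$uc_{p}^{6}(T)\le uc_{p}^{6}(T^{**})\le C\,wk_{X^{***}}\big((T^{**})^{*}\big)=C\,wk_{X^{***}}(T^{***})\le 16C\,wk_{X^{*}}(T^{*}),$$
so $X$ satisfies (1) with $16C$; condition (4) is the identical computation with $wk_{Y^{**}}(T^{**})$ in place of $wk_{X^{***}}(T^{***})$. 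For (2), using the lift $PT^{**}$ and $\|P^{*}\|\le 1$ one has $(PT^{**})^{*}=T^{***}P^{*}$ with $T^{***}(P^{*}B_{l_{\infty}^{*}})\subset T^{***}B_{l_{\infty}^{***}}$, hence $wk_{X^{***}}\big((PT^{**})^{*}\big)\le wk_{X^{***}}(T^{***})\le 16\,wk_{X^{*}}(T^{*})$; for (5), the composition inequality gives $wk_{l_{\infty}}(PT^{**})\le wk_{l_{\infty}^{**}}(T^{**})\le 16\,wk_{l_{\infty}}(T)$. In both cases $uc_{p}^{6}(T)\le uc_{p}^{6}(PT^{**})\le C\cdot(\text{the right-hand side above})$, so $X$ inherits the condition with $16C$. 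Finally, for part (b), put $B=J_{X^{*}}A\subset X^{***}$; the left inequality of (\ref{2}), applied to the isometric embedding $J_{X^{*}}\colon X^{*}\hookrightarrow X^{***}$, gives $wk_{X^{***}}(B)\le wk_{X^{*}}(A)$, so if $X^{**}$ satisfies (3) with constant $C$ then $\eta_{p}(A)\le\eta_{p}(B)\le C\,wk_{X^{***}}(B)\le C\,wk_{X^{*}}(A)$, i.e. $X$ satisfies (3) with the same constant $C$.

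The routine part is bookkeeping: each instance of (\ref{1}) and (\ref{2}) must be applied to the operator living between the correct pair of iterated dual spaces, and one must check that the weak$^{*}$ closures involved are computed in the intended bidual. The only inputs that are not pure assembly are the existence of the norm-one projection $P$ (the $1$-injectivity of $l_{\infty}$) and the composition inequality for $wk$; I expect the one spot needing genuine — if brief — care to be the verification that $uc_{p}^{6}(T)\le uc_{p}^{6}(PT^{**})$, which rests on $P$ restricting to the identity on $l_{\infty}$.
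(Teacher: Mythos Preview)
Your argument is correct and follows the paper's approach: for (a) lift via the double adjoint, use $uc_{p}^{6}(T)\le uc_{p}^{6}(T^{**})$ together with two applications of (\ref{1}), and for (b) use the left inequality of Lemma~\ref{5.4} applied to $J_{X^{*}}A\subset X^{***}$. Your handling of conditions (2) and (5) through the norm-one projection $P\colon l_{\infty}^{**}\to l_{\infty}$ is actually more careful than the paper's one-line proof, which invokes only $uc_{p}^{6}(T)\le uc_{p}^{6}(T^{**})$ and (\ref{1}) without explicitly addressing the fact that $T^{**}$ lands in $l_{\infty}^{**}$ rather than $l_{\infty}$; your use of $1$-injectivity is the natural way to close that gap.
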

\begin{proof}
The assertion (a) follows immediately from the inequality (\ref{1}) and the easy fact that $uc_{p}^{6}(T)\leq uc_{p}^{6}(T^{**})$ for each operator $T$. The assertion (b) is a direct consequence of (\ref{2}).
\end{proof}

{\bf Acknowledgements.} This work is done during the first author's visit to Department of Mathematics, Texas A\&M University. We would like to thank Professor W. B. Johnson for helpful discussions and comments.


\begin{thebibliography}{MMM}
\frenchspacing

\bibitem[1]{AC}
C. Angosto and B. Cascales, {\em Measures of weak non-compactness in Bananch spaces}, Topology Appl.{\bf 156}(2009), 1412-1421.


\bibitem[2]{AK}
F. Albiac and N. J. Kalton,
{\em Topics in Banach space theory}, Springer, 2005.

\bibitem[3]{BKS}
H. Bendov\'{a}, O. F. K. Kalenda and J. Spurn\'{y}, {\em Quantification of the Banach-Saks property}, J.Funct.
Anal.{\bf 268}(2015), 1733-1754.

\bibitem[4]{BCM}F. Bombal, P. Cembranos and J. Mendoza,
{\em On the surjective Dunford-Pettis property}, Math.Z.{\bf 204}

(1990), 373-380.




\bibitem[5]{BD}J. Bourgain and F. Delbaen,
{\em A class of special $\mathcal{L}_{\infty}$ spaces}, Acta Math.{\bf 145}(1980),no.3--4, 155--176.

\bibitem[6]{CS1}
J. M. F. Castillo and F. S\'{a}nchez, {\em Remarks on some basic properties of Tsirelson's space},
Note di Matematica.{\bf XIII}(1993), 117-122.

\bibitem[7]{CS2}
J. M. F. Castillo and F. S\'{a}nchez, {\em Dunford-Pettis-like properties of continuous vector function spaces},
Revista Matematica.{\bf 6}(1993), 43-59.



\bibitem[8]{CS}
J. M. F. Castillo and F. S\'{a}nchez, {\em Weakly $p$-compact, $p$-Banach-Saks and super-reflexive Banach spaces},
J.Math.Anal.Appl.{\bf 185}(1994), 256-261.


\bibitem[9]{DFJP}
W.J. Davis, T. Figiel, W. B. Johnson, and A. Pe{\l}czy\'{n}ski, {\em Factoring weakly compact operators},  J. Funct. Anal. {\bf 7}(1974), 311-327.



\bibitem[10]{D1}
J. Diestel,
{\em A survey of results related to the Dunford-Pettis property},
Contemp.Math.{\bf 2}(Amer.Math.

Soc.,Providence,RI,1980), 15-60.


\bibitem[11]{D2}
J. Diestel,
{\em Sequences and Series in Banach Spaces}, Springer, New York, 1984.


\bibitem[12]{DJT}
J. Diestel, H. Jarchow and A. Tonge,
{\em Absolutely summing operators},
Cambridge Studies in Adv. Math., Vol.43,
Cambridge Univ. Press, Cambridge, 1995.

\bibitem[13]{FHSZ}
M. Fabian, P. H\'{a}jek, V. M. Santaluc\'{i}a and V. Zizler, {\em A quantitative version of Krein's theorem}, Rev.Mat.Iberoam.{\bf 21}(2005), 237-248.

\bibitem[14]{FS}
J. H. Fourie and J. Swart, {\em Banach ideals of $p$-compact operators}, Manuscripta Math.{\bf 26}(1979), 349-362.

\bibitem[15]{G1}
A. Grothendieck, {\em Sur certaines classes des suites dans les espaces de Banach, et le th\'{e}or\'{e}me de Dvoretzky-Rogers},
Bol.Soc.Mat.S$\tilde{a}$o Paulo.{\bf 8}(1956), 81-110.

\bibitem[16]{G2}
A. Grothendieck, {\em Sur les applications lin\'{e}aires faiblement compactes d'espaces du type $C(K)$}, Canad J.Math.{\bf 5}(1953), 129-173.(French)

\bibitem[17]{KKS}
M. Ka\v{c}ena, O. F. K. Kalenda and J. Spurn\'{y}, {\em Quantitative Dunford-Pettis property}, Adv.Math.
{\bf 234}
(2013), 488-527.

\bibitem[18]{KPS}
O. F. K. Kalenda, H. Pfitzner and J. Spurn\'{y}, {\em On quantifications of weak sequential completeness},
J.Funct.
Anal. {\bf 260}(2011), 2986-2996.

\bibitem[19]{KS}
O. F. K. Kalenda and J. Spurn\'{y}, {\em Quantifications of the reciprocal Dunford-Pettis property}, Studia Math.{\bf 210}(2012), 261-278.





\bibitem[20]{K}
H. Kruli\v{s}ov\'{a}, {\em Quantification of Pe{\l}czy\'{n}ski's property (V)}, to appear.



\bibitem[21]{L}D. Leung,
{\em Uniform convergence of operators and Grothendieck spaces with the Dunford-Pettis property}, Math.Z.{\bf 197}(1988), 21-32.


\bibitem[22]{LT} J. Lindenstrauss and L. Tzafriri,
{\em Classical Banach Spaces I, Sequence Spaces}, Springer,
Berlin, 1977.

\bibitem[23]{PVWY}
A. M. Peralta, I. Villanueva, J. D. M. Wright and K. Ylinen, {\em Topological characterisation of weakly compact operators}, J.Math.Anal.Appl.{\bf 325}(2007), 968-974.

\bibitem[24]{Scha}
H. H. Schafer, {\em Topological vector spaces}, New York: Springer 1971.




\bibitem[25]{SK}
D. P. Sinha and A. K. Karn, {\em Compact operators whose adjoints factor through subspaces of $l_{p}$}, Studia Math.{\bf 150}(2002), 17-53.













\end{thebibliography}
\end{document}